
\documentclass[10pt]{article}
\usepackage{graphicx}
\usepackage{amssymb,amsmath,amsthm,amsbsy}
\usepackage{calligra,calrsfs,mathrsfs,tabularx}
\usepackage{multirow}
\usepackage{ulem,cancel}
\usepackage[usenames,dvipsnames]{color}
\usepackage{subcaption}
\usepackage{tikz}
\usetikzlibrary[patterns]
\usepackage{hyperref}
\usepackage{algorithm}
\usepackage{algpseudocode}
\usepackage{dsfont}
\usepackage{caption}
\usepackage{epstopdf}
\usepackage{epsfig}
\usepackage[numbers]{natbib}
\usepackage{pgfplots}
\usepgfplotslibrary{fillbetween}
\usetikzlibrary{patterns}
\usepackage{enumerate}
\usepackage{tikz-dimline}
\usepackage{geometry}
\geometry{letterpaper, margin=3.cm}

\numberwithin{equation}{section}

\newcolumntype{M}[1]{>{\centering\arraybackslash}m{#1}}

\newcommand{\ifcomment}{\iffalse}

\newcommand{\bs}[1]{\boldsymbol{#1}}

\setlength{\tabcolsep}{0pt}



\newcommand{\ti}[1]{\tilde{#1}}
\newcommand{\G}{\Gamma}
\newcommand{\Om}{\Omega}

\newcommand{\cT}{{\mathcal T}}
\newcommand{\tO}{\tilde{\Omega}_h}
\newcommand{\tG}{\tilde{\Gamma}_h}
\newcommand{\tn}{{\tilde{\bs{n}}}}
\newcommand{\tns}{{\tilde{n}}}
\newcommand{\tx}{{\tilde{\bs{x}}}}
\newcommand{\tT}{{\tilde{\cT}_h}}
\newcommand{\tS}{S_{h}}

\newtheorem{definition}{Definition}
\newtheorem{thm}{Theorem}

\newtheorem{prop}{Proposition}
\newtheorem{lemma}{Lemma}
\newtheorem{rem}{Remark}
\newtheorem{assumption}{Assumption}
\newtheorem{property}{Property}
\newtheorem{corollary}{Corollary}

\newcommand{\Figures}{./Figures}

\providecommand{\keywords}[1]{\textbf{ Keywords.} #1}
\providecommand{\amsclass}[1]{\textbf{AMS Classification Index.} #1}




\title{Analysis of the Shifted Boundary Method \\ for the Poisson Problem in General Domains}
\author{Nabil~M.~Atallah \thanks{Department of Civil and Environmental Engineering, Duke University, Durham, North Carolina 27708, USA  (nabil.atallah@duke.edu) }
\and 
Claudio~Canuto\thanks{Dipartimento di Scienze Matematiche,
Politecnico di Torino,
Corso Duca degli Abruzzi 24,
I-10129 Torino, Italy (claudio.canuto@polito.it)} 
\and
Guglielmo~Scovazzi\thanks{Department of Civil and Environmental Engineering, Duke University, Durham, North Carolina 27708, USA  (guglielmo.scovazzi@duke.edu)} 
}
\begin{document}
\maketitle


\begin{abstract}
The shifted boundary method (SBM) is an approximate domain method for boundary value problems, in the broader class of unfitted/embedded/immersed methods. It has proven to be quite efficient in handling problems with complex geometries, ranging from Poisson to Darcy, from Navier-Stokes to elasticity and beyond.
The key feature of the SBM is a {\it shift} in the location where Dirichlet boundary conditions are applied - from the true to a surrogate boundary - and an appropriate modification (again, a {\it shift}) of the value of the boundary conditions, in order to reduce the consistency error.
In this paper we provide a sound analysis of the method in smooth and non-smooth domains, highlighting the influence of geometry and distance between exact and surrogate boundaries upon the convergence rate.
Without loss of generality, we consider the Poisson problem with Dirichlet boundary conditions as a model and we first detail a procedure to obtain the crucial shifting between the surrogate and the true boundaries. Next, we give a sufficient condition for the well-posedness and stability of the discrete problem. The behavior of the consistency error arising from shifting the boundary conditions is thoroughly analyzed, for smooth boundaries and for boundaries with corners and edges. The convergence rate is proven to be optimal in the energy norm, and is further enhanced in the $L^2$-norm.
\end{abstract}

\noindent \keywords{Embedded methods; immersed boundary methods; non-smooth domains; finite element methods; weak boundary conditions; Taylor expansions; consistency error; convergence analysis}

\noindent\amsclass{Primary 65N30; Secondary 65N12, 65N50}


\section{Introduction \label{sec:intro} }

 The shifted boundary method (SBM) is an approximate domain method for boundary value problems, in the broader class of unfitted/embedded/immersed methods (see, e.g., \cite{peskin1972flow,boffi2003finite,burman2010ghost,hansbo2002unfitted,hollig2003finite,ruberg2012subdivision,burman2018cut,burman2015cutfem,kamensky2017immersogeometric,cockburn2012solving,parvizian2007finite}).
 
 In the SBM, the location where boundary conditions are applied is {\it shifted} from the true boundary to an approximate (surrogate) boundary, and, at the same time, modified ({\it shifted}) boundary conditions are applied in order to avoid a reduction of the convergence rates of the overall formulation. In fact, if the boundary conditions associated to the true domain are not appropriately modified on the surrogate domain, only first-order convergence is to be expected. The appropriate (modified) boundary conditions are then applied weakly, using a Nitsche strategy. This process yields a method which is simple, robust, accurate and efficient. 
 
 The SBM was proposed for the Poisson and Stokes flow problems in~\cite{main2018shiftedI} and was generalized in~\cite{main2018shiftedII} to the advection-diffusion and Navier-Stokes equations and in~\cite{song2018shifted} to hyperbolic conservation laws. An extension of the SBM in conjunction with high-order gradient approximations was presented in~\cite{nouveau2019high}, and the benefits of its application in the context of reduced order modeling was analyzed in~\cite{karatzas2020reduced,karatzas2019reduced,karatzas2019reduced_2}. Further rigorous mathematical analysis was pursued in~\cite{atallah2020analysis,sbm_with_claudio_darcy} for the Stokes and Darcy flow equations.
 
From a mathematical perspective, the SBM can be related to the boundary approximation method proposed by Bramble, Dupont, and Thom\`ee \cite{BrDuTh72}, although the original SBM works aimed at different, more general directions, as highlighted in the present paper.
 The purpose of this paper is to provide a sound mathematical analysis of the SBM in rather general domains in two or three dimensions. We develop a stability, consistency and convergence analysis for a model Dirichlet problem for the Poisson equation. 
The domain may have a smooth boundary, or may be a polygon in two dimensions or a polyhedron in three dimensions, possibly with curved edges/faces; no convexity assumption is made. Thus, the exact solution may not exhibit full elliptic regularity, due to the presence of corners and edges. This impacts on the accuracy of the shifted boundary conditions to be applied on the surrogate boundary, since these are obtained by performing a truncated Taylor expansion therein. The consistency of the SBM discretization, and ultimately its convergence properties, depend on the behavior of the remainder in such expansion, which is thoroughly investigated in the present article.  We analyze the separate instances when the surrogate boundary is either near a smooth portion of the true boundary or near one of its corners or edges. For each case we evaluate the rate of decay of the remainder of the Taylor expansion.

Starting from an admissible, shape-regular triangulation that may not fit and may extend beyond the physical boundary, the surrogate domain is formed by the portion of such triangulation that is contained inside the physical domain. 
In particular, classical conforming, piecewise-affine finite elements are used in the surrogate domain. 
We pose an asymptotic condition on how the surrogate boundary approaches the true boundary, which essentially requires that the distance between the surrogate and the exact boundaries tends to zero slightly faster that the meshsize (say, at least proportionally to $h^{1+\zeta}$ for some arbitrarily small $\zeta >0$, if $h$ is the meshsize). This is a mild condition, with no real impact on the choice of the triangulations to be used in practice, yet it allows us to establish the well-posedness of the SBM-based Galerkin discretization, as well as its numerical stability. (A similar assumption was already made in \cite{BrDuTh72}, although the main focus therein was on the classical case of quadratic distance from a smooth boundary, i.e., $\zeta=1$ in our notation.) Together with the consistency results discussed above, this approach allows us to prove convergence of the discretization in a suitable `energy' norm with optimal rate, then in the $L^2$-norm via a duality argument. Our analysis highlights how the rate of decay of the error depends on the shape of the exact boundary, due to the possible presence of singularities in the exact solution therein. 

It is worth mentioning that while we use the simplest model of Poisson's equation, the mathematical framework and the analysis described here can easily be extended to handle more complex models, such Darcy's, or Stokes', or elasticity equations. 

This article is organized as follows: Section~\ref{sec:def-SBM} introduces the Shifted Boundary Method for our model problem, describing in particular how the exact boundary condition is mapped to the surrogate boundary. The coercivity and continuity properties of the SBM variational form are established in Section~\ref{sec:stability}. The key Section~\ref{sec:remainder} analyzes the behavior of the Taylor remainder on the surrogate boundary. The consistency and convergence properties of our method are discussed in Section~\ref{sec:convergence} based on Strang's Second Lemma, whereas an enhanced error estimate in the $L^2$-norm is obtained in Section~\ref{sec:duality} via an Aubin-Nitsche argument. Finally, a representative numerical test is given in Sect. \ref{sec:numerical_test}.

\section{The Shifted Boundary Method \label{sec:def-SBM} }

Let $\Omega$ be a bounded and connected open region in  $\mathbb{R}^{n}$ ($n=2$ or $3$), with Lipschitz boundary $\Gamma=\partial\Omega$; let $\Gamma$ be formed by $C^2$ curves (in two dimensions) or surfaces (in three dimensions), which intersect at a finite number of vertices (in two dimensions) or edges (in three dimensions), and let $\bs{n}$ denote the outward-oriented unit normal vector to $\Gamma$. We assume that the physical problem of interest has been adimensionalized  in such a way that $\text{diam}(\Omega) \simeq 1$.

We aim at numerically solving the Dirichlet boundary value problem for the Poisson equation
\begin{equation}\label{eq:Dirichlet}
\begin{split}
-\Delta u &= f \quad \text{in \ } \Omega \,, \\
u &=g \quad \text{on \ } \Gamma \,,
\end{split}
\end{equation}
where the data $f \in L^2(\Omega)$ and $g\in H^{1/2}(\Gamma)$ satisfy additional regularity assumptions that will be made precise later on. In view of the application of the finite element method, we consider a closed domain ${\mathcal D}$ such that $\text{clos}(\Omega) \subseteq {\mathcal D}$ and we introduce a family $\cT_h$ of admissible and shape-regular triangulations of ${\mathcal D}$. Then, we restrict each triangulation by selecting those elements that are contained in $\text{clos}(\Omega)$, i.e., we form
$$
\ti{\cT}_h := \{ T \in \cT_h : T \subset \text{clos}(\Omega) \}\,.
$$ 
This identifies the {\sl surrogate domain}
$$
\tO := \text{int} (\bigcup_{T \in \ti{\cT}_h}  T ) \subseteq \Omega \,,
$$
with {\sl surrogate boundary} $\tG:=\partial \tO$ and outward-oriented unit normal vector $\ti{\bs{n}}$ to $\tG$. Obviously, $\ti{\cT}_h$ is an admissible and shape-regular triangulation of $\ti{\Omega}_h$ (see Figure~\ref{fig:SBM}). As usual, we indicate by $h_T$ the diameter of an element $T \in \ti{\cT}_h$ and by $h$ the piecewise constant function in $\tO$ such that $h_{|T}=h_T$ for all $T \in \ti{\cT}_h$.
\begin{figure}
	\centering
	\begin{tikzpicture}[scale=0.85]
	\draw [black, draw=none,name path=surr] plot coordinates { (-2,-3.4641) (-1,-1.73205) (0,-0.5) (1,1.73205) (2.6,2.2) (5,1.73205) (7,2.1) (7.62,0.8) };
	\draw [blue, name path=true] plot[smooth] coordinates {(-0.7,-3.4641) (1.75,0.75) (8.25,0.75)};
	 \tikzfillbetween[of=true and surr,split]{gray!15!};
	\draw[line width = 0.25mm,densely dashed,gray] (-1,1.73205) -- (0,3.4641);
	\draw[line width = 0.25mm,densely dashed,gray] (0,3.4641) -- (2,0);
	\draw[line width = 0.25mm,densely dashed,gray] (1,1.73205) -- (1,3.4641);
	\draw[line width = 0.25mm,densely dashed,gray] (1,3.4641) -- (0,3.4641);
	\draw[line width = 0.25mm,densely dashed,gray] (1,3.4641) -- (2.6,2.2);
	\draw[line width = 0.25mm,densely dashed,gray] (1,3.4641) -- (3.25,3.4641);
	\draw[line width = 0.25mm,densely dashed,gray] (3.25,3.4641) -- (2.6,2.2);
	\draw[line width = 0.25mm,densely dashed,gray] (3.25,3.4641) -- (5,1.73205);
	\draw[line width = 0.25mm,densely dashed,gray] (3.25,3.4641) -- (6,3.4641);
	\draw[line width = 0.25mm,densely dashed,gray] (6,3.4641) -- (5,1.73205);
	\draw[line width = 0.25mm,densely dashed,gray] (6,3.4641) -- (7,2.1);
	\draw[line width = 0.25mm,densely dashed,gray] (0,-0.5) -- (-2,0.5);
	\draw[line width = 0.25mm,densely dashed,gray] (-2,0.5) -- (-1,1.73205);
	\draw[line width = 0.25mm,densely dashed,gray] (-2,0.5) -- (-1,-1.73205);
	\draw[line width = 0.25mm,densely dashed,gray] (-2,0.5) -- (-2.5,-2);
	\draw[line width = 0.25mm,densely dashed,gray] (-2.5,-2) -- (-1,-1.73205);
	\draw[line width = 0.25mm,densely dashed,gray] (-2.5,-2) -- (-2,-3.4641);
	\draw[line width = 0.25mm,densely dashed,gray] (0,-0.5) -- (-1,1.73205);
	\draw[line width = 0.25mm,densely dashed,gray] (-1,1.73205) -- (1,1.73205);
	\draw[line width = 0.25mm,densely dashed,gray] (0,-0.5) -- (2,0);
	\draw[line width = 0.25mm,densely dashed,gray] (2,0) -- (1,1.73205);
	\draw[line width = 0.25mm,densely dashed,gray] (1,1.73205) -- (0,-0.5);
	\draw[line width = 0.25mm,densely dashed,gray] (2,0) -- (2.6,2.2);
	\draw[line width = 0.25mm,densely dashed,gray] (2.6,2.2) -- (1,1.73205);
	\draw[line width = 0.25mm,densely dashed,gray] (2,0) -- (4,0);
	\draw[line width = 0.25mm,densely dashed,gray] (4,0) -- (2.6,2.2);
	\draw[line width = 0.25mm,densely dashed,gray] (4,0) -- (2.6,2.2);
	\draw[line width = 0.25mm,densely dashed,gray] (2.6,2.2) -- (5,1.73205);
	\draw[line width = 0.25mm,densely dashed,gray] (5,1.73205) -- (4,0);
	\draw[line width = 0.25mm,densely dashed,gray] (4,0) -- (6,0);
	\draw[line width = 0.25mm,densely dashed,gray] (6,0) -- (5,1.73205);
	\draw[line width = 0.25mm,densely dashed,gray] (6,0) -- (7,2.1);
	\draw[line width = 0.25mm,densely dashed,gray] (7,2.1) -- (5,1.73205);
	\draw[line width = 0.25mm,densely dashed,gray] (6,0) -- (8,0);
	\draw[line width = 0.25mm,densely dashed,gray] (8,0) -- (7,2.1);
	\draw[line width = 0.25mm,densely dashed,gray] (0,-0.5) -- (-1,-1.73205);
	\draw[line width = 0.25mm,densely dashed,gray] (-1,-1.73205) -- (1,-1.73205);
	\draw[line width = 0.25mm,densely dashed,gray] (2,0) -- (1,-1.73205);
	\draw[line width = 0.25mm,densely dashed,gray] (1,-1.73205) -- (0,-0.5);
	\draw[line width = 0.25mm,densely dashed,gray] (2,0) -- (3,-1.73205);
	\draw[line width = 0.25mm,densely dashed,gray] (3,-1.73205) -- (1,-1.73205);
	\draw[line width = 0.25mm,densely dashed,gray] (4,0) -- (3,-1.73205);
	\draw[line width = 0.25mm,densely dashed,gray] (2,0) -- (4,0);
	\draw[line width = 0.25mm,densely dashed,gray] (4,0) -- (3,-1.73205);
	\draw[line width = 0.25mm,densely dashed,gray] (3,-1.73205) -- (5,-1.73205);
	\draw[line width = 0.25mm,densely dashed,gray] (5,-1.73205) -- (4,0);
	\draw[line width = 0.25mm,densely dashed,gray] (4,0) -- (6,0);
	\draw[line width = 0.25mm,densely dashed,gray] (6,0) -- (5,-1.73205);
	\draw[line width = 0.25mm,densely dashed,gray] (6,0) -- (7,-1.73205);
	\draw[line width = 0.25mm,densely dashed,gray] (7,-1.73205) -- (5,-1.73205);
	\draw[line width = 0.25mm,densely dashed,gray] (6,0) -- (8,0);
	\draw[line width = 0.25mm,densely dashed,gray] (8,0) -- (7,-1.73205);
	\draw[line width = 0.25mm,densely dashed,gray] (0,-3.4641) -- (-2,-3.4641);
	\draw[line width = 0.25mm,densely dashed,gray] (-2,-3.4641) -- (-1,-1.73205);
	\draw[line width = 0.25mm,densely dashed,gray]  (-1,-1.73205) -- (0,-3.4641);
	\draw[line width = 0.25mm,densely dashed,gray] (0,-3.4641) -- (1,-1.73205);
	\draw[line width = 0.25mm,densely dashed,gray] (0,-3.4641) -- (2,-3.4641);
	\draw[line width = 0.25mm,densely dashed,gray] (2,-3.4641) -- (1,-1.73205);
	\draw[line width = 0.25mm,densely dashed,gray] (2,-3.4641) -- (3,-1.73205);
	\draw [line width = 0.5mm,blue, name path=true] plot[smooth] coordinates {(-0.75,-3.681818) (1.75,0.75) (8.25,0.75)};
	\draw[line width = 0.5mm,red] (1,1.73205) -- (2.6,2.2);
	\draw[line width = 0.5mm,red] (2.6,2.2) -- (5,1.73205);
	\draw[line width = 0.5mm,red] (5,1.73205) --  (7,2.1);
	\draw[line width = 0.5mm,red] (1,1.73205) -- (0,-0.5);
	\draw[line width = 0.5mm,red] (0,-0.5) -- (-1,-1.73205);
	\draw[line width = 0.5mm,red] (-1,-1.73205) -- (-2,-3.4641);
	\node[text width=0.5cm] at (7.5,2.1) {\large${\color{red}\ti{\Gamma}_{h}}$};
	\node[text width=3cm] at (1.25,1.25) {\large${\color{red}\ti{\Omega}_{h}}$};
	\node[text width=3cm] at (0.25,-3) {\large${\color{blue}\Omega}$};
	\node[text width=0.5cm] at (8.75,0.75) {\large${\color{blue}\Gamma}$};
	\node[text width=3cm] at (4.65,1.5) {\large$\Omega \setminus \ti{\Omega}_{h} $};
	\node[text width=3cm] at (5.5,3.75) {\large$\ti{\Omega}_{h}  \subset \Omega $};
	\end{tikzpicture}
	\caption{The true domain $\Omega$, the surrogate domain $\ti{\Omega}_{h} \subset \Omega$ and their boundaries $\ti{\Gamma}_{h}$ and $\Gamma$.}
	\label{fig:SBM}
\end{figure}
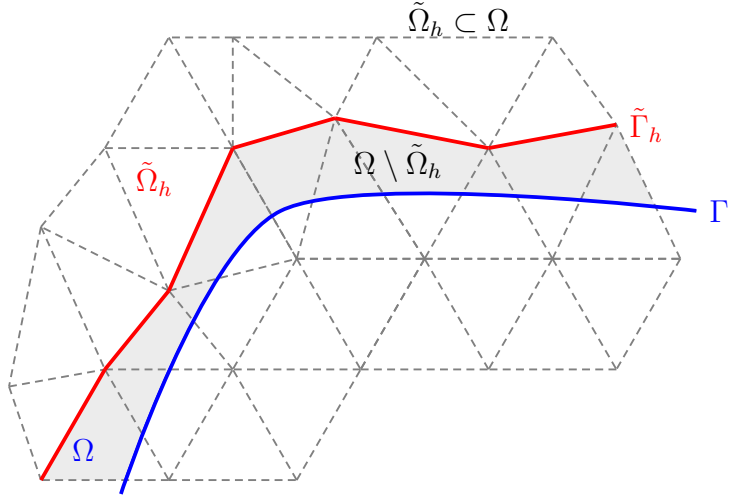
We want to discretize Problem \eqref{eq:Dirichlet} in $\tO$ rather than in $\Omega$. If by magic we knew the value of the exact solution $u$ on $\tG$, say $u_{|\tG}=\ti{g}$, we could consider the problem
\begin{equation}\label{eq:Dirichlet-h}
\begin{split}
-\Delta u &= f \quad \text{in \ } \tO \,, \\
u &=\ti{g} \quad \text{on \ } \tG \,,
\end{split}
\end{equation}
and discretize it by a Galerkin method using finite element trial and test functions on the triangulation $\tilde{\cT}_h$. In particular, we could consider Nitsche's approach \cite{nitsche1971}, which gains in flexibility by enforcing the boundary conditions in a weak manner by penalization. It is based on the following equivalent formulation of Problem \eqref{eq:Dirichlet-h}
\begin{equation}\label{eq:nitsche1}
\begin{split}
&(\nabla u, \nabla v)_{0,\tO} - (\partial_\tns u, v)_{0,\tG} - (f,v)_{0,\tO} \\ 
& \hskip 2.5cm - (u-\tilde{g}, \partial_\tns v)_{0,\tG} + \gamma\, (h^{-1} (u-\tilde{g}), v)_{0,\tG} = 0\,,
\end{split}
\end{equation}
which hold for any sufficiently smooth test function $v$ in $\tO$; here, $\partial_\tns = \tn \cdot \nabla$ denotes a normal derivative to $\tG$, whereas $\gamma >0$ is the (for the moment, arbitrary) penalization parameter. Introducing the finite dimensional subspace of $H^1(\tO)$ made of continuous, piecewise affine functions on the triangulation $\ti{\cT}_h$, 
\begin{equation}
V_h = V_h(\tO; \ti{\cT}_h) := \{ v \in H^1(\tO) : v_{|T} \in \mathbb{P}_1(T), \ \forall T \in \ti{\cT}_h \}\,,
\end{equation}
we could consider the following Galerkin discretization of Problem \eqref{eq:Dirichlet-h}: 
\begin{quote}
{\sl Find $\tilde{u}_h \in V_h$ such that $\forall v_h \in V_h$}
\end{quote}
\begin{equation}
\begin{split}
& (\nabla \tilde{u}_h, \nabla v_h)_{0,\tO} - (\partial_\tns \tilde{u}_h, v_h)_{0,\tG} -  ( \tilde{u}_h, \partial_\tns v_h)_{0,\tG} + \gamma (h^{-1} \tilde{u}_h, v_h)_{0,\tG}  \\
& \hskip 3.5cm    = (f,v_h)_{0,\tO} -  (\tilde{g}, \partial_\tns v)_{0,\tG} + \gamma \,(h^{-1} \tilde{g}, v_h)_{0,\tG}  \,.
\end{split}
\end{equation}

To make this scheme practically feasible, we need a consistent approximation of $\tilde{g}$. To obtain it, 
let us select a mapping
\begin{equation}\label{eq:mappingM}
\bs{M}_h : \ti{\Gamma}_h \to \Gamma\,, \qquad \ti{\bs{x}} \mapsto \bs{x}\,, 
\end{equation}
which associates to any point $\ti{\bs{x}}$ on the surrogate boundary a point $\bs{x} =\bs{M}_h(\ti{\bs{x}})$ on the true boundary. For example, $\bs{x}$ can be chosen as the closest point to $\ti{\bs{x}}$ on  $\Gamma$. We postpone to Sect. \ref{sec:def-M} the precise definition of a mapping $\bs{M}_h$ that works in practice. The mapping \eqref{eq:mappingM} can be characterized through a distance vector function $\bs{d}_{\bs{M}_h} $ defined by
\begin{equation}\label{eq:mappingd}
\bs{d}_{\bs{M}_h} (\ti{\bs{x}}) := \bs{x}  - \ti{\bs{x}}= [\bs{M}_h-\bs{I}]( \ti{\bs{x}})\,.
\end{equation}
For the sake of simplicity, we will actually set $\bs{d}=\bs{d}_{\bs{M}_h}$, so that we will write $\bs{x}=\ti{\bs{x}}+\bs{d}(\ti{\bs{x}})$. It will be useful to write $\bs{d}=\Vert \bs{d} \Vert \bs{\nu}$, where $\bs{\nu}$ is a unit vector defined on $\tG$. Note that $\bs{\nu}(\ti{\bs{x}})$ may differ from both $\ti{\bs{n}}(\ti{\bs{x}})$ and $\bs{n}(\bs{M}_h(\ti{\bs{x}}))$ (where $\bs{n}$ denotes the normal to the exact boundary $\G$).  

Assuming that $u$ is sufficiently smooth in the strip between $\tG$ and $\Gamma$, so to admit a first-order Taylor expansion pointwise, we can write
$$
g(\bs{x})=u(\bs{x}) = u(\ti{\bs{x}}+\bs{d}(\ti{\bs{x}}))=u(\tx)+ (\nabla u \cdot \bs{d})(\tx) + (R(u,\bs{d}))(\tx)\,, 
$$
where the remainder $R(u,\bs{d})$ satisfies  $|R(u,\bs{d})| = o(\Vert \bs{d}\Vert)$ as $\Vert \bs{d}\Vert \to 0$.
Equivalently, we can write  
$$
u(\tx)= g(\bs{M}_h (\tx)) -(\nabla u \cdot \bs{d})(\tx) - (R(u,\bs{d}))(\tx)\,.
$$
For the sake of simplicity, let us introduce the function on $\tG$ 
\begin{equation}\label{eq:def-tildeg}
\bar{g}(\tx):=g(\bs{M}_h (\tx))\,;
\end{equation}
then, we see that the trace $\tilde{g}$ of $u$ on $\tG$ satisfies
\begin{equation}\label{eq:trace-u}
\tilde{g} = \bar{g}-\nabla u \cdot \bs{d} - R(u,\bs{d})\,.
\end{equation}
In conclusion, on $\tG$ it holds
\begin{equation}\label{eq:u-g}
0= u-\tilde{g} \ = \ u+ \nabla u \cdot \bs{d} - \bar{g} + R(u,\bs{d}) \ = \ \tS u - \bar{g} + R_hu \,,
\end{equation}
where we have introduced the boundary operator  on $\tG$
\begin{equation}\label{eq:def-bndS}
\tS v := v+ \nabla v \cdot \bs{d} \,,
\end{equation}
and $R_h u$ is a short-hand notation for the Taylor remainder $R(u,\bs{d})$.
Neglecting the higher-order term (with respect to $\Vert \bs{d}\Vert$) in \eqref{eq:u-g} and  
recalling \eqref{eq:nitsche1}, we deduce that the exact solution $u$ satisfies the approximate equations
\begin{equation}\label{eq:nitsche2}
\begin{split}
& (\nabla u, \nabla v)_{0,\tO} - (\partial_\tns u, v)_{0,\tG} - (f,v)_{0,\tO} \\
& \hskip 2.cm  - (\tS u-\bar{g}, \partial_\tns v)_{0,\tG} + \gamma \, (h^{-1} (\tS u-\bar{g}), \tS v)_{0,\tG} \, \approx \; 0\,,
\end{split}
\end{equation}
for any sufficiently smooth test function $v$ in $\tO$. Note that in the last inner product we have replaced $v$ by $\tS v$ in order to obtain a symmetric stabilization term; on the other hand, unlike \eqref{eq:nitsche1}, the left-hand side of \eqref{eq:nitsche2} is no longer symmetric in $u$ and $v$.

Based on these approximate equations, we are led to introduce the following Galerkin discretization of Problem \eqref{eq:Dirichlet-h}: 
\begin{quote}
{\sl Find $u_h \in V_h$ such that $\forall v_h \in V_h$}
\end{quote}
\begin{equation}\label{eq:SBM-formulation-1}
\begin{split}
& \ \ \ \qquad (\nabla u_h, \nabla v_h)_{0,\tO} \!  - (\partial_\tns u_h, v_h)_{0,\tG}  \! -  ( \tS u_h, \partial_\tns v_h)_{0,\tG} +  \gamma \, (h^{-1} \tS u_h, \tS v_h)_{0,\tG} \\
& \ \ \ \ \hskip 4.5cm = (f,v_h)_{0,\tO} \! -  (\bar{g}, \partial_\tns v_h)_{0,\tG} \! + \gamma \, (h^{-1} \bar{g}, \tS v_h)_{0,\tG}\,.
\end{split}
\end{equation}

In view of the subsequent analysis, it is convenient to introduce the bilinear form
\begin{equation}\label{eq:SBM-a-form}
a_h(w,v) := (\nabla w, \nabla v)_{0,\tO} - (\partial_\tns w, v)_{0,\tG} -  ( \tS w, \partial_\tns v)_{0,\tG} + \gamma \, (h^{-1} \tS w, \tS v)_{0,\tG}
\end{equation}
and the linear form
\begin{equation}\label{eq:SBM-l-form}
\ell_h(v) := (f,v)_{0,\tO} -  (\bar{g}, \partial_\tns v)_{0,\tG} + \gamma \, (h^{-1} \bar{g}, \tS v)_{0,\tG}\,,
\end{equation}
so that the proposed SBM discretization can be written in compact form as follows:
\begin{equation}\label{eq:SBM-formulation-2}
\text{\sl Find $u_h \in V_h$ such that} \qquad a_h(u_h,v_h)=\ell_h(v_h) \quad \forall v_h \in V_h\,.
\end{equation}

Note that the form $a_h$ is well defined for all functions in $H^1(\tO)$ whose gradient admits a trace in $L^2(\tG)$. This aspect will be further investigated in Sect. \ref{sec:stability}.

\begin{rem} {\rm
The SBM \eqref{eq:SBM-formulation-1} is similar to the method proposed in \cite{BrDuTh72} for linear elements. Therein, the shift operator $\tS$ acts invariably in the direction of $\ti{\bs{n}}$ (i.e., only the choice $\bs{\nu}=\ti{\bs{n}}$ is considered). See also \cite{DuGuSc20} for a recent alternative approach.
}
\end{rem}

\subsection{A possible definition of the mapping $\bs{M}_h$}
\label{sec:def-M}

Let us assume that the boundary $\Gamma$ of the original domain $\Omega$ is partitioned into $M$ subsets $\Gamma_m$, $m=1,\dots,M$, hereafter termed {\sl sidesets}, with the following properties:
\begin{enumerate}
\item Each $\Gamma_m$ is relatively closed in $\Gamma$, and satisfies $\displaystyle{\cup_{m}} \Gamma_m = \Gamma$, $ \text{int}\,\Gamma_m \cap  \text{int}\,\Gamma_n = \emptyset$ for $m \not= n$.
\item Each  $\Gamma_m$ is ``smooth", in the sense that the normal unit vector $\bs{n}_m$ (pointing outward) exists at each $\bs{x}\in \Gamma_m$ and varies in a continuous manner.
\item The assigned Dirichlet data $g$ is a smooth function on each $\Gamma_m$.
\end{enumerate}
We aim at associating a unique sideset to each edge (in two dimensions) or face (in three dimensions) of the triangulation $\ti{\cT}_h$ that is sitting on the surrogate boundary $\tG$; all the boundary information needed to define the transported Dirichlet data $\bar{g}$ on the edge/face will be drawn from the associated sideset. \\

Consider the case of an edge (in two dimensions) $\tilde{E} \subset \tG$ (the three-dimensional case can be handled similarly) with $\tn$ as its unit normal vector.  Let $\tx_a, \, \tx_b$ be the endpoints of $\tilde{E}$ and $\bs{x}_a, \, \bs{x}_b$ be their respective closest-point projections upon $\Gamma$. Finally, let $L$ be the set of sidesets that $\bs{x}_{a}$ and $\bs{x}_{b}$ belong to. To associate a unique sideset $\Gamma_{m(\ti{E})}$ to $\ti{E}$, the following cases arise:
\begin{itemize}
\item Case 1: If $\bs{x}_{a}$ and $\bs{x}_{b}$ belong to the same unique sideset, say $\G_{1}$, then the set $L$ will have a single, element, namely $\G_{1}$, and thus $\ti{E}$ is associated with it (Figure~\ref{fig:specialEdge_Case1}) .
\item Case 2: If $\bs{x}_{a}$ and $\bs{x}_{b}$ belong to different, unique sidesets, say $\G_{1}$ and $\G_{2}$ respectively, then the set $L$ will consist of $\G_{1}$ and $\G_{2}$ (Figure~\ref{fig:specialEdge_Case2}). In such a case, we associate $\ti{E}$ with the sideset $\Gamma_m$ in $L$ such that 
\begin{align}
\Gamma_m = \arg\max_{\Gamma_s \in L} f(\Gamma_s) 
\end{align}
where $f(\Gamma_s) = \sum_{i=a,b}\ti{\bs{n}} \cdot \bs{n}_{\Gamma_s}(\bs{x}_{i})$.
\item Case 3: If $\bs{x}_{a}$ (or $\bs{x}_{b}$) belongs to the intersection of two sidesets, say $\G_{1}$ and $\G_{2}$, then both such sidesets are added to set $L$. At this point, we refer to Case 2 for associating $\ti{E}$ with a sideset (Figure~\ref{fig:specialEdge_Case3}).  
\end{itemize}
At last, we define the mapping $\bs{M}_h$ on $\ti{E}$ by setting
\begin{equation}\label{eq:def-M}
\bs{M}_h(\tx) := \text{the closest-point projection of $\tx$ upon $\Gamma_{m(\ti{E})}$}\,, \qquad \forall \tx \in \ti{E}\,.
\end{equation}

\begin{rem}\label{rem:def-M} {\rm 
{\sl i)} \ According to the given definition, $\bs{M}_h$ may be multi-valued at the intersection of two edges. However, this has no effect at all since $\bs{M}_h$ appears in the boundary integrals in \eqref{eq:SBM-formulation-1}, which are computed edge-wise.
{\sl ii)} \ If $\bs{M}_h(\tx)$ falls in the interior of a sideset $\Gamma_m$ and coincides with the closest-point projection of $\tx$ {\sl upon the whole $\Gamma$}, then the vector $\bs{d}$ introduced in \eqref{eq:mappingd} is aligned with $\bs{n}_m$ (Figure~\ref{fig:distance_n}). Otherwise, it may not be aligned (Figure~\ref{fig:distance_nu}). 
}
\end{rem}

\begin{figure}
	\centering
	\begin{subfigure}{0.46\textwidth}
		\centering
		\begin{tikzpicture}[scale=0.9]
		\draw[line width = 0.5mm,blue] (0,0) -- (4,0);
		\draw[line width = 0.5mm,blue] (4,0) -- (4,-4);
		\draw[->,line width = 0.25mm,-latex] (2,0) -- (2,-0.75);
		\draw[->,line width = 0.25mm,-latex] (4,-2) -- (3.25,-2);
		\node[text width=0.5cm] at (0.5,-0.5) {$\G_{1}$};
		\node[text width=0.5cm] at (1.75,-0.4) {$\bs{n}_{1}$};
		\node[text width=0.5cm] at (3.6,-3.5) {$\G_{2}$};
		\node[text width=0.5cm] at (3.75,-2.5) {$\bs{n}_{2}$};
		\draw[line width = 0.5mm,red] (1,0.25) -- (3.25,0.75);
		\node[text width=0.5cm] at (2.125,0.75) {$\ti{E}$};
		\draw[line width = 0.25mm, dashed] (1,0.25) -- (1,0);
		\draw[line width = 0.25mm, dashed] (3.25,0.75) -- (3.25,0);
		\fill (3.25,0.75) circle (0.5mm);
		\fill (1,0) circle (0.5mm);
		\fill (1,0.25) circle (0.5mm);
		\fill (3.25,0) circle (0.5mm);
		\node[text width=0.5cm] at (3.25,-0.2) {$\bs{x}_{b}$};
		\node[text width=0.5cm] at (1,-0.2) {$\bs{x}_{a}$};
		\node[text width=0.5cm] at (1,0.55) {$\ti{\bs{x}}_{a}$};
		\node[text width=0.5cm] at (3.25,1) {$\ti{\bs{x}}_{b}$};
		\end{tikzpicture}
		\caption{Case 1.}
		\label{fig:specialEdge_Case1}
	\end{subfigure}
	\qquad
	\begin{subfigure}{0.46\textwidth}
		\centering
		\begin{tikzpicture}[scale=0.9]
		\draw[line width = 0.5mm,blue] (0,0) -- (4,0);
		\draw[line width = 0.5mm,blue] (4,0) -- (4,-4);
		\draw[line width = 0.5mm,red] (3.25,1.75) -- (5,-0.5);
		\node[text width=0.5cm] at (4.3,1) {$\ti{E}$};
		\draw[->,line width = 0.25mm,-latex] (4,0.75) -- (3.4,0.25);
		\node[text width=0.5cm] at (4.1,0.35) {$\ti{\bs{n}}$};
		\draw[->,line width = 0.25mm,-latex] (2,0) -- (2,-0.75);
		\draw[->,line width = 0.25mm,-latex] (4,-2) -- (3.25,-2);
		\node[text width=0.5cm] at (0.5,-0.5) {$\G_{1}$};
		\node[text width=0.5cm] at (1.75,-0.4) {$\bs{n}_{1}$};
		\node[text width=0.5cm] at (3.6,-3.5) {$\G_{2}$};
		\node[text width=0.5cm] at (3.75,-2.5) {$\bs{n}_{2}$};
		\fill (3.25,1.75) circle (0.5mm);
		\draw[line width = 0.25mm, dashed] (3.25,1.75) -- (3.25,0);
		\node[text width=0.5cm] at (3.0,1.75) {$\ti{\bs{x}}_{a}$};
		\node[text width=0.5cm] at (3.2,-0.2) {$\bs{x}_{a}$};
		\fill (3.25,0) circle (0.5mm);
		\fill (5,-0.5) circle (0.5mm);
		\node[text width=0.5cm] at (5.25,-0.75) {$\ti{\bs{x}}_{b}$};
		\node[text width=0.5cm] at (3.8,-0.75) {$\bs{x}_{b}$};
		\fill (4,-0.5) circle (0.5mm);
		\draw[line width = 0.25mm, dashed] (5,-0.5) -- (4,-0.5);
		\end{tikzpicture}
		\caption{Case 2.}
		\label{fig:specialEdge_Case2}
	\end{subfigure}
\\
	\begin{subfigure}{0.27\textwidth}
		\centering
		\begin{tikzpicture}[scale=0.9]
		\draw[line width = 0.5mm,blue] (0,0) -- (4,0);
		\draw[line width = 0.5mm,blue] (4,0) -- (4,-4);
		\draw[line width = 0.5mm,red] (2.75,1.5) -- (5,0);
		\node[text width=0.5cm] at (4.3,1) {$\ti{E}$};
		\draw[->,line width = 0.25mm,-latex] (3.75,0.85) -- (3.4,0.275);
		\node[text width=0.5cm] at (3.9,0.45) {$\ti{\bs{n}}$};
		\draw[->,line width = 0.25mm,-latex] (2,0) -- (2,-0.75);
		\draw[->,line width = 0.25mm,-latex] (4,-2) -- (3.25,-2);
		\node[text width=0.5cm] at (0.5,-0.5) {$\G_{1}$};
		\node[text width=0.5cm] at (1.75,-0.4) {$\bs{n}_{1}$};
		\node[text width=0.5cm] at (3.6,-3.5) {$\G_{2}$};
		\node[text width=0.5cm] at (3.75,-2.5) {$\bs{n}_{2}$};
		\fill (2.75,0) circle (0.5mm);
		\draw[line width = 0.25mm, dashed] (2.75,1.5) -- (2.75,0);
		\node[text width=0.5cm] at (3.1,1.75) {$\ti{\bs{x}}_{a}$};
		\node[text width=0.5cm] at (2.6,-0.2) {$\bs{x}_{a}$};
		\fill (2.75,1.5) circle (0.5mm);
		\fill (5,0) circle (0.5mm);
		\node[text width=0.5cm] at (5.4,0) {$\ti{\bs{x}}_{b}$};
		\node[text width=0.5cm] at (4.35,-0.25) {$\bs{x}_{b}$};
		\fill (4,0) circle (0.5mm);
		\draw[line width = 0.25mm, dashed] (4,0) -- (5,0);
		\end{tikzpicture}
		\caption{Case 3.}
		\label{fig:specialEdge_Case3}
	\end{subfigure}
	\caption{Cases involved in the strategy for assigning a sideset to the surrogate edge $\ti{E}$.}
\end{figure}
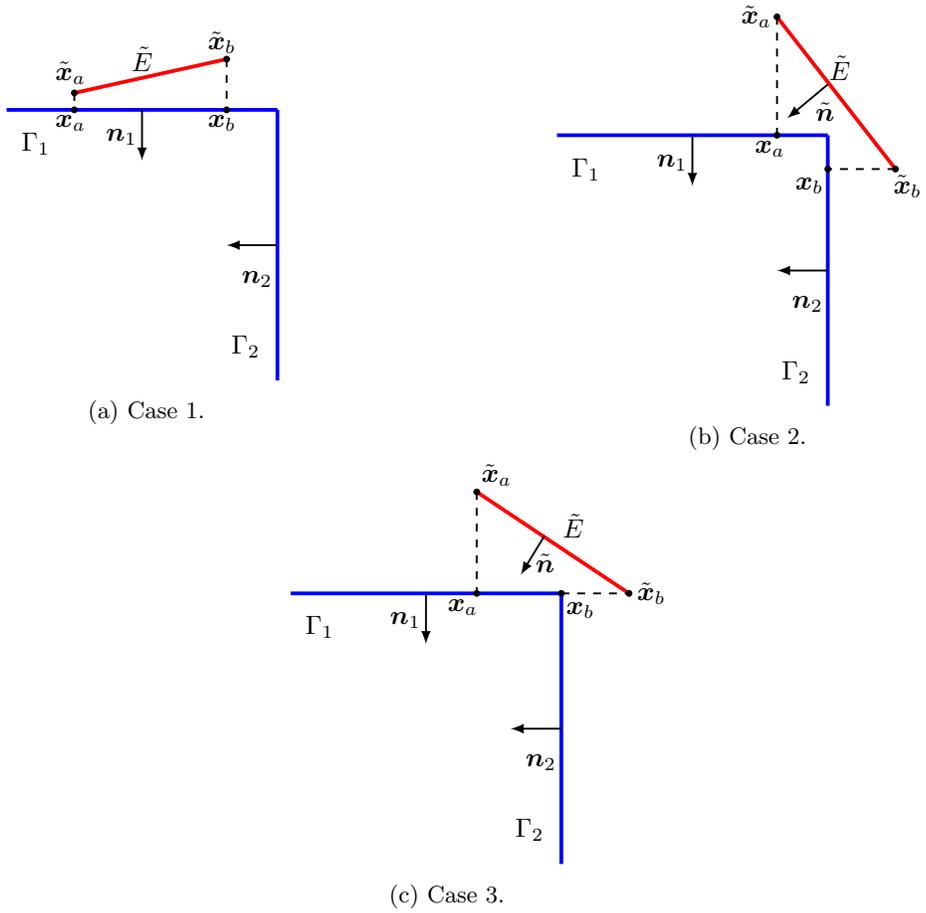
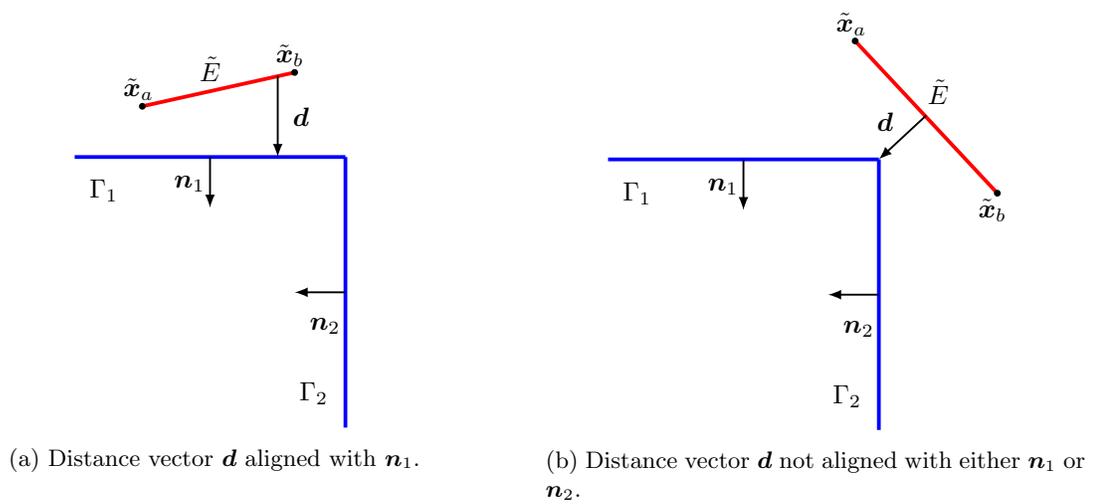
\begin{figure}
	\centering
	\begin{subfigure}{0.46\textwidth}
	\centering
	\begin{tikzpicture}[scale=0.9]
	\draw[line width = 0.5mm,blue] (0,0) -- (4,0);
	\draw[line width = 0.5mm,blue] (4,0) -- (4,-4);
	\draw[->,line width = 0.25mm,-latex] (2,0) -- (2,-0.75);
	\draw[->,line width = 0.25mm,-latex] (4,-2) -- (3.25,-2);
	\node[text width=0.5cm] at (0.5,-0.5) {$\G_{1}$};
	\node[text width=0.5cm] at (1.75,-0.4) {$\bs{n}_{1}$};
	\node[text width=0.5cm] at (3.6,-3.5) {$\G_{2}$};
	\node[text width=0.5cm] at (3.75,-2.5) {$\bs{n}_{2}$};
	\draw[line width = 0.5mm,red] (1,0.75) -- (3.25,1.25);
	\node[text width=0.5cm] at (2.125,1.3) {$\ti{E}$};
	\fill (3.25,1.25) circle (0.5mm);
	\fill (1,0.75) circle (0.5mm);
	\node[text width=0.5cm] at (1,1) {$\ti{\bs{x}}_{a}$};
	\node[text width=0.5cm] at (3.25,1.5) {$\ti{\bs{x}}_{b}$};
	\draw[->,line width = 0.25mm,-latex] (3,1.2) -- (3,0);
	\node[text width=0.5cm] at (3.5,0.6) {$\bs{d}$};
	\end{tikzpicture}
	\caption{Distance vector $\bs{d}$ aligned with $\bs{n}_{1}$.}
	\label{fig:distance_n}
\end{subfigure}
	\qquad
	\begin{subfigure}{0.46\textwidth}
	\centering
	\begin{tikzpicture}[scale=0.9]
	\draw[line width = 0.5mm,blue] (0,0) -- (4,0);
	\draw[line width = 0.5mm,blue] (4,0) -- (4,-4);
	\draw[line width = 0.5mm,red] (3.65,1.75) -- (5.75,-0.5);
	\node[text width=0.5cm] at (5,1) {$\ti{E}$};
	\draw[->,line width = 0.25mm,-latex] (4.7,0.65) -- (4,0);
    \node[text width=0.5cm] at (4.25,0.55) {$\bs{d}$};
	\draw[->,line width = 0.25mm,-latex] (2,0) -- (2,-0.75);
	\draw[->,line width = 0.25mm,-latex] (4,-2) -- (3.25,-2);
	\node[text width=0.5cm] at (0.5,-0.5) {$\G_{1}$};
	\node[text width=0.5cm] at (1.75,-0.4) {$\bs{n}_{1}$};
	\node[text width=0.5cm] at (3.6,-3.5) {$\G_{2}$};
	\node[text width=0.5cm] at (3.75,-2.5) {$\bs{n}_{2}$};
	\fill (3.65,1.75) circle (0.5mm);
	\node[text width=0.5cm] at (3.65,2) {$\ti{\bs{x}}_{a}$};
	\fill (5.75,-0.5) circle (0.5mm);
	\node[text width=0.5cm] at (5.75,-0.75) {$\ti{\bs{x}}_{b}$};
	\end{tikzpicture}
	\caption{Distance vector $\bs{d}$ not aligned with either $\bs{n}_{1}$ or $\bs{n}_{2}$.}
	\label{fig:distance_nu}
\end{subfigure}
        \caption{The distance vector $\bs{d}$ and the normal vector $\bs{n}$ to $\Gamma$}
\end{figure}

\section{Coercivity and continuity analysis \label{sec:stability} }

As a first result, we establish coercivity and continuity properties of the bilinear form $a_h$. To this end, we make the following assumption, all the subsequent analysis will rely on.
\begin{assumption}\label{ass:d-smallness}
There exist constants $c_d > 0$ and $\zeta >0$ such that
\begin{equation}\label{eq:d-smallness}
\Vert \bs{d}(\tx) \Vert \leq c_d \, h_T^{1+\zeta} \,, \qquad \forall \tx \in T \cap \tG\,, \quad \forall T \in \tT \,.
\end{equation}
\end{assumption}
\noindent This condition requires the distance $\Vert \bs{d} \Vert$ between $\tG$ and $\Gamma$ to go to zero slightly faster that the local meshsize. It can be realized in practice by slightly shifting the nodes on $\tG$ towards $\Gamma$ while refining the grid.

We introduce the following mesh parameters
\begin{equation}\label{eq:mesh-param}
h_\Gamma := \max_{T\in \tT : T {\color{violet}\cap} \tG \not = \emptyset} h_T\,, \qquad h_\Omega := \max_{T\in \tT} h_T \,.
\end{equation}

The following well-known scaled trace inequalities in shape-regular triangulations will be used in different forms throughout the paper.
\begin{property}
There exists a constant $C_I >0$ independent of the meshsize such that for any $T \in \tT$ and any edge/face $E \subset \partial T$ it holds
\begin{equation}\label{eq:scaled-1}
\Vert h_T^{1/2} \nabla w \Vert_{0,E}^2 \leq C_I \, \Vert \nabla w \Vert_{0,T}^2 \,, \qquad \forall w \in \mathbb{P}_1(T)\,.
\end{equation}
This immediately gives
\begin{equation}\label{eq:scaled-2}
\Vert h^{1/2} \nabla w \Vert_{0,\tG}^2 \leq C_I \, \Vert \nabla w \Vert_{0,\tO}^2 \,, \qquad \forall w \in V_h\,.
\end{equation}
\end{property} 

We exploit Assumption \ref{ass:d-smallness} to prove the uniform coercivity of the form $a_h$ for sufficiently refined grids; see \cite[Lemma 6]{BrDuTh72} for a similar result.
\begin{prop}\label{prop:coercvity-a}
Assume that the Nitsche penalization parameter $\gamma$ satisfies $\gamma > 2C_I$, and the mesh parameter $h_\Gamma$ satisfies $h_\Gamma^\zeta \leq \frac1{4c_dC_I}$. Then, there exists a constant $\alpha>0$ independent of the meshsize, such that
\begin{equation}\label{eq:coercivity-1}
a_h(v_h,v_h) \geq \alpha \, \Vert v_h \Vert_a^2 \,, \qquad \forall v_h \in V_h\,,
\end{equation}
where
\begin{equation}\label{eq:coercivity-2}
\Vert v \Vert_a^2 := \Vert \nabla v \Vert_{0,\tO}^2+ \Vert h^{-1/2} \tS v \Vert_{0,\tG}^2\,.
\end{equation}
\end{prop}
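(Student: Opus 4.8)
The plan is to test the bilinear form against $v_h$ itself and to control the two boundary cross-terms by the coercive contributions $\Vert\nabla v_h\Vert_{0,\tO}^2$ and $\gamma\Vert h^{-1/2}\tS v_h\Vert_{0,\tG}^2$. The key algebraic observation is that the scheme's lack of symmetry can be tamed by writing $v_h=\tS v_h-\nabla v_h\cdot\bs{d}$ on $\tG$, directly from the definition \eqref{eq:def-bndS}. Substituting this into the term $-(\partial_\tns v_h,v_h)_{0,\tG}$ gives
\begin{equation}
a_h(v_h,v_h)=\Vert\nabla v_h\Vert_{0,\tO}^2+\gamma\Vert h^{-1/2}\tS v_h\Vert_{0,\tG}^2-2(\partial_\tns v_h,\tS v_h)_{0,\tG}+(\partial_\tns v_h,\nabla v_h\cdot\bs{d})_{0,\tG}\,.
\end{equation}
Thus the two original cross-terms collapse into a single symmetric coupling $-2(\partial_\tns v_h,\tS v_h)_{0,\tG}$ plus one genuinely new term $(\partial_\tns v_h,\nabla v_h\cdot\bs{d})_{0,\tG}$, which is where Assumption \ref{ass:d-smallness} will pay off.

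Next I would estimate the two remaining terms. For the coupling term I would use a weighted Young inequality with parameter $\delta>0$, then the pointwise bound $|\partial_\tns v_h|\le|\nabla v_h|$ (since $\tn$ is a unit vector) and the scaled trace inequality \eqref{eq:scaled-2}, to obtain
\begin{equation}
2\,|(\partial_\tns v_h,\tS v_h)_{0,\tG}|\le\frac{C_I}{\delta}\Vert\nabla v_h\Vert_{0,\tO}^2+\delta\,\Vert h^{-1/2}\tS v_h\Vert_{0,\tG}^2\,.
\end{equation}
For the new term I would work element by element: on each boundary edge/face $E\subset T\cap\tG$, Cauchy–Schwarz together with $\Vert\bs{d}\Vert\le c_d h_T^{1+\zeta}$ and again $|\partial_\tns v_h|\le|\nabla v_h|$ bounds it by $c_d h_T^{1+\zeta}\Vert\nabla v_h\Vert_{0,E}^2$; factoring as $h_T^{\zeta}\cdot\bigl(h_T\Vert\nabla v_h\Vert_{0,E}^2\bigr)$ and invoking \eqref{eq:scaled-1} turns this into $c_d C_I h_T^{\zeta}\Vert\nabla v_h\Vert_{0,T}^2$. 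Summing over the boundary elements and using $h_T^\zeta\le h_\Gamma^\zeta\le\frac1{4c_dC_I}$ yields $|(\partial_\tns v_h,\nabla v_h\cdot\bs{d})_{0,\tG}|\le\frac14\Vert\nabla v_h\Vert_{0,\tO}^2$.

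Finally I would collect the estimates and fix the free parameter. Taking $\delta=2C_I$, the coupling bound contributes $\frac12\Vert\nabla v_h\Vert_{0,\tO}^2+2C_I\Vert h^{-1/2}\tS v_h\Vert_{0,\tG}^2$, so that
\begin{equation}
a_h(v_h,v_h)\ \ge\ \Big(1-\tfrac12-\tfrac14\Big)\Vert\nabla v_h\Vert_{0,\tO}^2+(\gamma-2C_I)\Vert h^{-1/2}\tS v_h\Vert_{0,\tG}^2\,,
\end{equation}
and the hypothesis $\gamma>2C_I$ makes both coefficients positive, giving \eqref{eq:coercivity-1} with $\alpha=\min\{\tfrac14,\gamma-2C_I\}$. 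I expect the only real subtlety to be the handling of the non-symmetric term: the decomposition $v_h=\tS v_h-\nabla v_h\cdot\bs{d}$ is what isolates exactly the extra contribution $(\partial_\tns v_h,\nabla v_h\cdot\bs{d})_{0,\tG}$, and it is essential that Assumption \ref{ass:d-smallness} supplies the \emph{superlinear} smallness $h_T^{1+\zeta}$ rather than merely $h_T$ — this is precisely what produces the absorbable factor $h_\Gamma^\zeta$ and motivates the stated smallness condition on $h_\Gamma$.
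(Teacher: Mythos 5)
Your proposal is correct and follows essentially the same route as the paper: the identical rewriting of the cross-terms via $v_h=\tS v_h-\nabla v_h\cdot\bs{d}$ on $\tG$, the same trace-inequality-plus-Young treatment of the coupling term (your choice $\delta=2C_I$ is exactly the paper's $\epsilon=\frac1{2C_I}$), and the same absorption of $(\partial_\tns v_h,\nabla v_h\cdot\bs{d})_{0,\tG}$ through Assumption \ref{ass:d-smallness} and the smallness condition $c_dC_Ih_\Gamma^\zeta\le\frac14$, arriving at the identical constant $\alpha=\min\left(\tfrac14,\gamma-2C_I\right)$. The only cosmetic difference is that you bound the $\bs{d}$-term elementwise via \eqref{eq:scaled-1} while the paper applies the global estimate \eqref{eq:scaled-2} directly, which amounts to the same thing.
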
 
\begin{proof}
One has
\begin{eqnarray*}
&&\!\!\!\!\!\!\!\!\!\!\!\!   a_h(v_h,v_h) = \Vert \nabla v_h \Vert_{0,\tO}^2 \!\!\!- 2(\tS v_h, \partial_\tns v_h)_{0,\tG} \!\!+ (\nabla v_h \cdot \bs{d}, \partial_\tns v_h)_{0,\tG} \!\!+ \gamma \, \Vert h^{-1/2} \tS v_h \Vert_{0,\tG}^2  \\
&& \hskip0.9cm = \Vert \nabla v_h \Vert_{0,\tO}^2 - 2(h^{-1/2}\tS v_h, h^{1/2}\partial_\tns v_h)_{0,\tG} \\
&& \hskip 3cm + \ (h^{-1} \Vert \bs{d}\Vert  \, h^{1/2}  \partial_{\nu} v_h, h^{1/2} \partial_\tns v_h)_{0,\tG} + \gamma \, \Vert h^{-1/2} \tS v_h \Vert_{0,\tG}^2 \,,
\end{eqnarray*}
where we have used $\bs{d}=\Vert \bs{d}\Vert\, \bs{\nu}$. Recalling \eqref{eq:d-smallness} and applying \eqref{eq:scaled-2}, we get
\begin{equation*}
\begin{split}
a_h(v_h,v_h) &\geq \Vert \nabla v_h \Vert_{0,\tO}^2 - 2 C_I^{1/2} \Vert h^{-1/2} \tS v_h \Vert_{0,\tG} \Vert \nabla v_h \Vert_{0,\tO} \\
& \hskip 2.1cm - c_d C_I h_{\G}^\zeta \Vert \nabla v_h \Vert_{0,\tO}^2 + \gamma \, \Vert h^{-1/2} \tS v_h \Vert_{0,\tG}^2 \,,
\end{split}
\end{equation*}
whence, by Young inequality,
$$
a_h(v_h,v_h) \geq \left(1- \epsilon C_I - c_d C_I h_{\G}^\zeta \right) \, \Vert \nabla v_h \Vert_{0,\tO}^2  + \left(\gamma - \tfrac1\epsilon \right)\, \Vert h^{-1/2} \tS v_h \Vert_{0,\tG}^2 \,.
$$
Choosing $\epsilon = \frac1{2C_I}$, we obtain the result with $\alpha=\min \left( \tfrac14, \gamma - 2C_I \right)$.  \end{proof}

A first consequence of Proposition \ref{prop:coercvity-a} is the well-posedness and numerical stability of the SBM discretization \eqref{eq:SBM-formulation-1} with respect to the norm $\Vert u_h \Vert_a$, which uniformly dominates the $H^1$-norm of $u_h$ (see Remark \ref{rem:H1-control} below).

We now turn to discuss the continuity of the form $a_h$. In view of the convergence analysis, we consider $a_h$ as defined on a product space $V(\tO;\tT) \times V(\tO;\tT)$, where $V(\tO;\tT)$ is an infinite-dimensional subspace of $H^1(\tO)$, depending upon the triangulation $\tT$ and containing $V_h$, made of functions for which the trace of the gradient on $\tG$ is well-defined and controlled in $L^2(\tG)$. For instance, $V(\tO;\tT)$ could be the space of functions with (broken) $H^2$-regularity in $\tO$. However, since the solution $u$ of our Poisson problem may exhibit singularities at the boundary $\Gamma$ due to the presence of corners or edges, we choose a space of weaker regularity, that we are going to define.

Let us introduce a `reference' element $\hat{T}$ for all our triangulations; as usual, we assume that $\hat{T}$ has unitary diameter. Let $\hat{E} \subset \partial\hat{T}$ any edge/face of $\hat{T}$. Let $s, p$ be real numbers satisfying $0 < s \leq 1$ and $1 \leq p \leq \infty$. Then, if $t =s-\frac1p$ satisfies $t >0$, the trace  theorem (see e.g. \cite{Grisvard}) guarantees that the trace operator $\tau_{\hat{E}} v = v_{|\hat{E}}$  maps $W^{s,p}(\hat{T})$ on $W^{t,p}(\hat{E})$ continuously. On the other hand, the Sobolev imbedding theorem guarantees that $W^{t,p}(\hat{E}) \subseteq L^2(\hat{E})$ with continuous inclusion, provided $s \geq \frac{n}p - \frac{n-1}2$; this condition holds true if $s > \frac1p$ with $p \geq 2$. Thus, $v_{|\hat{E}} \in L^2(\hat{E})$ if $v \in W^{s,p}(\hat{T})$, and the bound
$$
\Vert v \Vert_{0,\hat{E}}^2 \leq \hat{C} \left( \Vert v \Vert_{0,\hat{T}}^2 + | v |_{s,p, \hat{T}}^2 \right)\,,
$$
holds with a constant $\hat{C}>0$ independent of $v$. Here, $| v |_{s,p, \hat{T}}$ denotes the fractional seminorm $\left(\int_{\hat{T}\times\hat{T}} \frac{|v(\bs{x})-v(\bs{y})|^p}{|\bs{x}-\bs{y}|^{n+sp}} d\bs{x}d\bs{y} \right)^{1/p}$ when $s<1$, or the $L^p$-norm of $\nabla v$ when $s=1$. Next, consider an element $T \in \tT$ with edge/face $E$: a scaling argument yields after a simple computation the existence of a constant $C>0$ independent of $h$ such that
\begin{equation}\label{eq:scaled-3}
\Vert h_T^{1/2} v \Vert_{0,E}^2 \leq C \left( \Vert v \Vert_{0,T}^2  +  h_T^{2s+(1 - 2/p)n} |  v |_{s,p, T}^2  \right) \,, \qquad \forall v \in W^{s,p}({T})\,.
\end{equation}
We apply such bound to each component of the gradient of a function, yielding the existence of a constant, still denoted $C_I$ as in \eqref{eq:scaled-1}, such that for all $w \in H^1(T)$ with $\nabla w \in W^{s,p}({T})$ it holds
\begin{equation}\label{eq:scaled-4}
\Vert h_T^{1/2} \nabla w \Vert_{0,E}^2 \leq C_I \left( \Vert \nabla w \Vert_{0,T}^2  +  h_T^{2s+(1 - 2/p)n} |\nabla w |_{s,p, T}^2  \right)\! , \quad \end{equation}
(Note indeed that this inequality reduces to \eqref{eq:scaled-1} when $w \in \mathbb{P}_1(T)$.) Thereby, considering the strip of elements in $\tT$ with at least one edge/face on $\tG$, i.e., the subset 
\begin{equation}\label{eq:def-cTb}
\tO^b := \bigcup \{T \in \tT^b \} \qquad \text{where \ } \tT^b:= \{ T \in \tT : \text{meas}\,(\partial T \cap \tG) > 0\}\,, 
\end{equation}
we are led to the following definition of the space $V(\tO;\tT)$. 

\begin{definition}\label{def:Wh}
Let  $s, p$ be real numbers satisfying $0 < s \leq 1$ and $2 \leq p \leq\infty$ with $s > \frac1p$. Then, set
\begin{equation}\label{eq:def-Wh1}
V(\tO;\tT) = V(\tO;\tT,s,p) := \{w \in H^1(\tO) : \nabla w_{|T} \in W^{s,p}(T) \ \ \forall T \in \tT^b \}\,,
\end{equation}
equipped with semi-norm
\begin{equation}\label{eq:def-Wh2}
| h^{s_p} \, \nabla w |_{s,p, \tO^b} := \left( \sum_{T \in \tT^b} | h_T^{s_p} \, \nabla w |_{s,p, T}^p \right)^{1/p} , \qquad s_p := s+\tfrac12-\tfrac1p \,,
\end{equation}
and norm
\begin{equation}\label{eq:def-Wh3}
\Vert w \Vert_{V(\tO;\tT)}^2 := \Vert w \Vert_a^2 + | h^{s_p} \, \nabla w |_{s,p, \tO^b}^2 \,.
\end{equation}
\end{definition}

With such definitions, we use  H\"older's inequality  in \eqref{eq:scaled-4} and the property that the measure $|\tG|$ of $\tG$ is uniformly bounded by $|\G|$, to get the existence of a constant $\bar{C}_I>0$ such that
\begin{equation}\label{eq:scaled-5}
\Vert h^{1/2} \nabla w \Vert_{0,\tG}^2 \leq \bar{C}_I \left( \Vert \nabla w \Vert_{0,\tO}^2  +  | h^{s_p} \, \nabla w |_{s,p, \tO^b}^2  \right) \,, \qquad \forall w \in V(\tO;\tT) \,,
\end{equation}
which extends \eqref{eq:scaled-2}; indeed, note that $V_h \subset V(\tO;\tT)$ and $\Vert v_h \Vert_{V(\tO;\tT)} = \Vert v_h \Vert_a$ for all $v_h \in V_h$. Note as well that $W^{1+s,p}(\tO) \subset V(\tO;\tT)$.

\begin{rem}\label{rem:H1-control} {\rm 
The norm $\Vert w \Vert_{V(\tO;\tT)}$ uniformly controls from above the standard norm $\Vert w \Vert_{1,\tO}$. Indeed, the latter is equivalent to the norm
$\left( \Vert \nabla w \Vert_{0,\tO}^2 \!\! + \Vert w \Vert_{0,\tG}^2 \! \right)^{1/2}$ and one has (assuming without loss of generality that $h \leq 1$ in $\tO$)
\begin{equation*}
\begin{split}
\Vert w \Vert_{0,\tG} & \leq \Vert h^{-1/2} w \Vert_{0,\tG} \leq  \Vert h^{-1/2} \tS w \Vert_{0,\tG} + \Vert h^{-1/2} \nabla w \cdot \bs{d} \Vert_{0,\tG} \\
& \leq 
\Vert h^{-1/2} \tS w \Vert_{0,\tG} + c_d \bar{C}_I^{1/2} h_\G^\zeta \left( \Vert \nabla w \Vert_{0,\tO} + | h^s \, \nabla w |_{s,p, \tO^b}^2 \right)^{1/2},
\end{split}
\end{equation*}
whence the result.  
}
\end{rem}

\begin{prop}\label{prop:continuity-a} The bilinear form $a_h$ is defined in $V(\tO;\tT) \times V(\tO;\tT) $ and uniformly continuous therein; precisely, there exists $A>0$ independent of $h$ such that
\begin{equation}\label{eq:continuity-a}
|\, a_h(w,v)\, | \leq A \, \Vert w \Vert_{V(\tO;\tT)} \, \Vert v \Vert_{V(\tO;\tT)}  \,, \qquad \forall w, v \in V(\tO;\tT) \,.
\end{equation}
\end{prop}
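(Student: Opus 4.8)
The plan is to expand $a_h(w,v)$ into its four constituent pieces, as in \eqref{eq:SBM-a-form}, and estimate each one separately against $\Vert\cdot\Vert_{V(\tO;\tT)}$, the only genuinely delicate contribution being the second. The volume term $(\nabla w,\nabla v)_{0,\tO}$ and the symmetric stabilization $\gamma(h^{-1}\tS w,\tS v)_{0,\tG}=\gamma(h^{-1/2}\tS w,h^{-1/2}\tS v)_{0,\tG}$ are immediately controlled by Cauchy--Schwarz, since in each case the two factors that appear are exactly the two contributions to $\Vert\cdot\Vert_a$; thus these terms are bounded by $\Vert w\Vert_a\Vert v\Vert_a\le\Vert w\Vert_{V(\tO;\tT)}\Vert v\Vert_{V(\tO;\tT)}$ (up to the factor $\gamma$).

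For the two terms carrying a normal derivative on $\tG$ I would insert the weights $h^{\pm1/2}$ and use that $|\partial_\tns v|=|\tn\cdot\nabla v|\le|\nabla v|$ pointwise, so that $\Vert h^{1/2}\partial_\tns v\Vert_{0,\tG}\le\Vert h^{1/2}\nabla v\Vert_{0,\tG}$; the scaled trace inequality \eqref{eq:scaled-5} then yields $\Vert h^{1/2}\nabla v\Vert_{0,\tG}\le\bar C_I^{1/2}\Vert v\Vert_{V(\tO;\tT)}$, and likewise for $w$. The third term then closes at once: writing $(\tS w,\partial_\tns v)_{0,\tG}=(h^{-1/2}\tS w,h^{1/2}\partial_\tns v)_{0,\tG}$ and applying Cauchy--Schwarz gives the bound $\Vert h^{-1/2}\tS w\Vert_{0,\tG}\,\bar C_I^{1/2}\Vert v\Vert_{V(\tO;\tT)}\le\bar C_I^{1/2}\Vert w\Vert_{V(\tO;\tT)}\Vert v\Vert_{V(\tO;\tT)}$.

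The remaining term $(\partial_\tns w,v)_{0,\tG}$ is the crux, because here $v$ enters \emph{without} the shift operator $\tS$, whereas $\Vert\cdot\Vert_{V(\tO;\tT)}$ only controls $v$ on $\tG$ through $\tS v$. I would write $(\partial_\tns w,v)_{0,\tG}=(h^{1/2}\partial_\tns w,h^{-1/2}v)_{0,\tG}$, bound the first factor as above by $\bar C_I^{1/2}\Vert w\Vert_{V(\tO;\tT)}$, and dispose of $\Vert h^{-1/2}v\Vert_{0,\tG}$ exactly by the device of Remark~\ref{rem:H1-control}: since $v=\tS v-\nabla v\cdot\bs{d}$, the triangle inequality gives $\Vert h^{-1/2}v\Vert_{0,\tG}\le\Vert h^{-1/2}\tS v\Vert_{0,\tG}+\Vert h^{-1/2}\,\nabla v\cdot\bs{d}\Vert_{0,\tG}$. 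The first summand is $\le\Vert v\Vert_a$. For the second, Assumption~\ref{ass:d-smallness} supplies $h_T^{-1/2}\Vert\bs{d}\Vert\le c_d\,h_T^{1/2+\zeta}$ on each $T\cap\tG$, so $\Vert h^{-1/2}\nabla v\cdot\bs{d}\Vert_{0,\tG}\le c_d\,h_\G^\zeta\Vert h^{1/2}\nabla v\Vert_{0,\tG}\le c_d\bar C_I^{1/2}h_\G^\zeta\Vert v\Vert_{V(\tO;\tT)}$ by \eqref{eq:scaled-5}. Hence $\Vert h^{-1/2}v\Vert_{0,\tG}\le(1+c_d\bar C_I^{1/2}h_\G^\zeta)\Vert v\Vert_{V(\tO;\tT)}$, uniformly in $h$ as long as $h_\G$ stays bounded. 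Summing the four estimates delivers \eqref{eq:continuity-a} with $A$ depending only on $\gamma$, $\bar C_I$, $c_d$ and $\zeta$.

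I expect the last term to be the sole obstacle: everywhere else the norm $\Vert\cdot\Vert_a$ already packages the required boundary quantities, whereas controlling the bare trace $v_{|\tG}$ forces one to trade the missing $\nabla v\cdot\bs{d}$ contribution against $\Vert h^{1/2}\nabla v\Vert_{0,\tG}$, which is precisely where Assumption~\ref{ass:d-smallness} and the fractional-regularity trace bound \eqref{eq:scaled-5} are needed.
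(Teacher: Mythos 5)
Your proof is correct and takes essentially the same route as the paper: the paper's (terser) proof likewise substitutes $v=\tS v-\nabla v\cdot\bs{d}$ to split the delicate term $(\partial_\tns w,v)_{0,\tG}$ into $(\partial_\tns w,\tS v)_{0,\tG}-(\partial_\tns w,\nabla v\cdot\bs{d})_{0,\tG}$, inserts the $h^{\pm1/2}$ weights, and concludes via Assumption~\ref{ass:d-smallness} and the trace bound \eqref{eq:scaled-5}. You have simply written out the Cauchy--Schwarz estimates that the paper leaves implicit.
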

\begin{proof}
One has
\begin{eqnarray*}
&& \hskip -.35cm a_h(w,v) = (\nabla w, \nabla v)_{0,\tO} - (\partial_\tns w, \tS v)_{0,\tG} + (\partial_\tns w, \nabla v \cdot \bs{d})_{0,\tG} \\
&&  \hskip 3cm -  ( \tS w, \partial_\tns v)_{0,\tG} + \gamma \, (h^{-1} \tS w, \tS v)_{0,\tG} \\
&&  \hskip 1.cm = (\nabla w, \nabla v)_{0,\tO} \!\! - (h^{1/2}\partial_\tns w, h^{-1/2} \tS v)_{0,\tG} \!\!+ (h^{1/2} \partial_\tns w, h^{-1/2} \nabla v \cdot \bs{d})_{0,\tG} \\
&& \hskip 3.cm  -  ( h^{-1/2}\tS w, h^{1/2} \partial_\tns v)_{0,\tG} + \gamma \, (h^{-1/2} \tS w, h^{-1/2}\tS v)_{0,\tG} \;.
\end{eqnarray*}
One concludes using \eqref{eq:d-smallness} and \eqref{eq:scaled-5}. \end{proof}

\section{Behavior of the Taylor remainder} \label{sec:remainder} 

This section is devoted to the analysis of the behavior of the Taylor remainder 
$R_h u = \bar{g} - \tS u$ in the expansion of $u$ on $\tG$ introduced in \eqref{eq:u-g}. In particular, we are interested in estimating a weighted $L^2$-norm of $R_h u$ along $\tG$ in terms of the mesh parameter $h_\G$.

We will first study the remainder in the neighborhood of a smooth portion of the physical boundary $\G$; next, we will move to consider the neighborhood of a singularity in $\G$ (a corner point in a polygonal domain, an edge or a vertex in a polyhedral domain). To keep technicalities at an acceptable level, we will detail our analysis for the two-dimensional situation, while just sketching arguments in the three-dimensional case.

\subsection{Analysis near a smooth portion of the boundary}\label{sec:remainder-smooth}

Let $\G_S \subseteq \G$ be a portion of the physical boundary which admits a $C^2$ parametrization. Focusing on the two-dimensional case, let $\tilde{E} \subset \tG$ be an edge of a triangle $T=T_{\tilde{E}} \in  \tT^b$ such that $\bs{M}_h(\tilde{E}) \subset \G_S$. Recalling Assumption \ref{ass:d-smallness}, it is not restrictive to suppose $\tilde{E}$ close enough to $\G$, so that $\bs{x}=\bs{M}_h(\tx)$ is the closest-point projection upon $\Gamma$ for all $\tx \in \tilde{E}$, and the vector $\bs{d}(\tx)$ introduced in \eqref{eq:mappingd} is aligned with the unit outward normal vector $\bs{n}(\bs{x})$ to $\G$.

Let $\tx_a, \, \tx_b$ be the endpoints of $\tilde{E}$, and $h_E := \Vert \tx_b - \tx_a \Vert$ its length. Introducing the unit vector $\tilde{\boldsymbol{v}} := h_{\tilde{E}}^{-1} (\tx_b - \tx_a)$, let us parametrize the points in $\tilde{E}$ by $\tx(\tau)= \tx_a + \tau \tilde{\boldsymbol{v}}$ with $0 \leq \tau \leq h_{\tilde{E}}$. Correspondingly, points in $\bs{M}_h(\tilde{E})$ are parametrized by $\bs{x}(\tau) = \bs{M}_h(\tx(\tau))$; let us set $\bs{n}(\tau):=\bs{n}(\bs{x}(\tau))$. Furthermore, let us introduce the 2D parametrization
\begin{equation}\label{eq:def-param}
\bs{x}(\tau,\sigma) = \boldsymbol{\Phi}(\tau,\sigma) := \tx(\tau) + \sigma \bs{n}(\tau)\,, \qquad  0 \leq \tau \leq h_{\tilde{E}}, \quad 0 \leq \sigma \leq d(\tau) := \Vert \bs{d}(\tx(\tau)) \Vert\,,
\end{equation}
as shown in Figure ~\ref{fig:TaylorRemainder_SmoothBoundary}.  Note that $\boldsymbol{\Phi}$ takes values in $\bar{\Omega} \setminus \tO$, and  satisfies $\boldsymbol{\Phi}(\tau, d(\tau)) = \bs{x}(\tau)$ for $0 \leq \tau \leq h_{\tilde{E}}$.
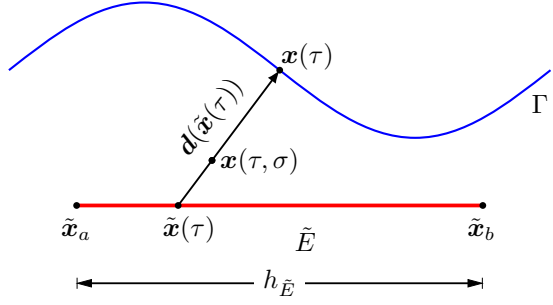
\begin{figure}
	\centering
		\begin{tikzpicture}[scale=0.9]
		\draw[line width = 0.5mm,red] (0,0) -- (6,0);
		\node[text width=0.5cm] at (3.5,-0.5) {$\ti{E}$};
		\dimline[extension start length =0,extension end length = 0,line style = {line width=0.7}]{(0,-1.15)}{(6,-1.15)}{$h_{\ti{E}}$};
		\fill (6,0) circle (0.5mm);
		\fill (0,0) circle (0.5mm);
		\fill (1.5,0) circle (0.5mm);
		\node[text width=0.5cm] at (0.05,-0.35) {$\ti{\bs{x}}_{a}$};
		\node[text width=0.5cm] at (6.05,-0.35) {$\ti{\bs{x}}_{b}$};
		\node[text width=0.5cm] at (1.55,-0.35) {$\ti{\bs{x}}(\tau)$};
        \draw[thick, blue]	(-1,2) sin (1,3) cos (3,2) sin (5,1) cos (7,2);
        \node[text width=0.5cm] at (7,1.5) {$\Gamma$};
        \draw[->,line width = 0.25mm,-latex] (1.5,0) -- (3,2);
        \fill (3,2) circle (0.5mm);
        \node[text width=0.5cm] at (3.3,2.2) {$\bs{x}(\tau)$};
        \fill (2,0.666667) circle (0.5mm);
        \node[text width=0.5cm] at (2.4,0.65) {$\bs{x}(\tau,\sigma)$};
        \fill (2,0.666667) circle (0.5mm);
   	    \node[text width=0.5cm,rotate=52.5] at (1.75,1) {$\bs{d}(\ti{\bs{x}}(\tau))$};
		\end{tikzpicture}
		\caption{Representation of a smooth portion of the boundary $\Gamma$, a surrogate edge $\ti{E}$ and their respective parametrization.}
		\label{fig:TaylorRemainder_SmoothBoundary}
\end{figure}
In order to compute the remainder $R_h u$ at the point $\tx(\tau)$ for some fixed $\tau$, let us introduce the mapping $\phi(\sigma) := u(\boldsymbol{\Phi}(\tau,\sigma))$. Then, assuming $u$ smooth enough (or applying a density argument), one has the Taylor representation
$$
\bar{g}(\tx(\tau)) = \phi(d(\tau)) = \phi(0) + \frac{{\rm d}\phi}{{\rm d}\sigma}(0) d(\tau) + \int_0^{d(\tau)} \frac{{\rm d}^2\phi}{{\rm d}\sigma^2}(s) (d(\tau)- s) \, {\rm d}s \,,
$$
with 
$$
\phi(0) = u(\tx(\tau)) \,, \quad \frac{{\rm d}\phi}{{\rm d}\sigma}(0) = \nabla u (\tx(\tau))\cdot \bs{n}(\tau)\,, \quad 
\frac{{\rm d}^2\phi}{{\rm d}\sigma^2}(s) = \bs{n}^T(\tau) {\mathcal H}u(\bs{x}(\tau,\sigma)) \bs{n}(\tau) \,,
$$
where ${\mathcal H}u$ denotes the Hessian matrix of $u$. It follows that $\phi(0) + \frac{{\rm d}\phi}{{\rm d}\sigma}(0) d(\tau) = (\tS u)(\tx(\tau))$, whence
\begin{equation}
(R_h u)(\tx(\tau)) 
 = \bs{n}^T(\tau) \left(\int_0^{d(\tau)} {\mathcal H}u(\bs{x}(\tau,s)) (d(\tau)- s) \, {\rm d}s \right)\bs{n}(\tau) \,.
\end{equation}
Using H\"older's inequality for $\frac1p+\frac1q=1$ with $2 \leq p \leq \infty$, we easily get
\begin{equation*}
| (R_h u)(\tx(\tau)) | \leq (1+q)^{-1/q} (d(\tau))^{1+1/q} \left(\int_0^{d(\tau)} \Vert {\mathcal H}u(\bs{x}(\tau,s)) \Vert^p  \, {\rm d}s \right)^{1/p} \,.
\end{equation*}
Squaring and integrating over $\tilde{E}$, and using once more H\"older's inequality, we get, with $d_{\tilde{E}} := \max_{\tau \in [0,h_{\tilde{E}}]} d(\tau)$,
\begin{equation}\label{eq:bound-Rhu1}
\Vert R_h u \Vert_{0,\tilde{E}}^2 
 \leq c_p d_{\tilde{E}}^{4-2/p} h_{\tilde{E}}^{1-2/p} \left( \int_0^{h_{\tilde{E}}} \int_0^{d(\tau)} \Vert {\mathcal H}u(\bs{x}(\tau,s)) \Vert^p  \, {\rm d}s\,  {\rm d}\tau \right)^{2/p} \, .
\end{equation}

Next lemma allows us to apply a change of variable in the previous integral.
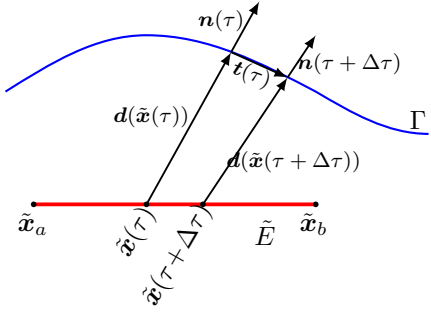
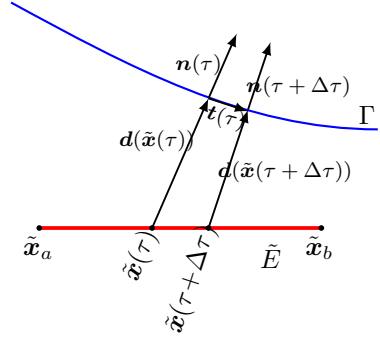
\begin{figure}
	\centering
		\begin{subfigure}{0.45\textwidth}
	\centering
	\begin{tikzpicture}[scale=0.75]
	\draw[line width = 0.5mm,red] (0,0) -- (5,0);
	\node[text width=0.5cm] at (4.25,-0.5) {$\ti{E}$};
	\fill (5,0) circle (0.5mm);
	\fill (0,0) circle (0.5mm);
	\fill (2,0) circle (0.5mm);
	\fill (3,0) circle (0.5mm);
	\node[text width=0.5cm] at (0.05,-0.35) {$\ti{\bs{x}}_{a}$};
	\node[text width=0.5cm] at (5.05,-0.35) {$\ti{\bs{x}}_{b}$};
	\node[text width=0.5cm, rotate=60] at (1.75,-0.6) {$\ti{\bs{x}}(\tau)$};
	\node[text width=1.5cm, rotate= 60] at (2.55,-0.9) {$\ti{\bs{x}}(\tau+\Delta \tau)$};
	\draw[thick, blue]	(-0.5,2) sin (2,3) cos (5,2) sin (7,1.25);
	\node[text width=0.5cm] at (7,1.5) {$\Gamma$};
	\draw[->,line width = 0.25mm,-latex] (2,0) -- (3.5,2.7);
	\draw[->,line width = 0.25mm,-latex] (3,0) -- (4.5,2.25);
	\draw[->,line width = 0.25mm,-latex] (3.5,2.7) -- (4.5,2.25);
	\draw[->,line width = 0.25mm,-latex] (3.5,2.7) -- (4,3.6);
	\draw[->,line width = 0.25mm,-latex] (4.5,2.25) -- (5,3);
	\node[text width=0.5cm] at (1.75,1.55){\footnotesize $\bs{d}(\ti{\bs{x}}(\tau))$};
	\node[text width=2cm] at (4.75,0.75){\footnotesize $\bs{d}(\ti{\bs{x}}(\tau+\Delta \tau))$};
	\node[text width=0.5cm] at (3.25,3.25){\footnotesize $\bs{n}(\tau)$};
	\node[text width=2cm] at (6,2.5){\footnotesize $\bs{n}(\tau+\Delta \tau)$};
	\node[text width=0.5cm,rotate=-20] at (3.85,2.3){\footnotesize $\bs{t}(\tau)$};
	\end{tikzpicture}
	\caption{Tangent vector for a concave boundary $\Gamma$.}
	\label{fig:TangentVector1_SmoothBoundary}
	\end{subfigure}
\qquad
		\begin{subfigure}{0.45\textwidth}
	\centering
	\begin{tikzpicture}[scale=0.75]
	\draw[line width = 0.5mm,red] (0,0) -- (5,0);
	\node[text width=0.5cm] at (4.25,-0.5) {$\ti{E}$};
	\fill (5,0) circle (0.5mm);
	\fill (0,0) circle (0.5mm);
	\fill (2,0) circle (0.5mm);
	\fill (3,0) circle (0.5mm);
	\node[text width=0.5cm] at (0.05,-0.35) {$\ti{\bs{x}}_{a}$};
	\node[text width=0.5cm] at (5.05,-0.35) {$\ti{\bs{x}}_{b}$};
	\node[text width=0.5cm, rotate=68] at (1.75,-0.6) {$\ti{\bs{x}}(\tau)$};
	\node[text width=1.5cm, rotate= 72] at (2.7,-0.9) {$\ti{\bs{x}}(\tau+\Delta \tau)$};
	\draw[thick, blue]	(-0.5,4) sin (6,1.75);
	\node[text width=0.5cm] at (6,2) {$\Gamma$};
	\draw[->,line width = 0.25mm,-latex] (2,0) -- (3,2.3);
	\draw[->,line width = 0.25mm,-latex] (3,0) -- (3.7,2.1);
	\draw[->,line width = 0.25mm,-latex] (3,2.3) -- (3.7,2.07);
	\draw[->,line width = 0.25mm,-latex] (3,2.3) -- (3.5,3.45);
	\draw[->,line width = 0.25mm,-latex] (3.7,2.1) -- (4.1,3.3);
	\node[text width=0.5cm] at (1.75,1.55){\footnotesize $\bs{d}(\ti{\bs{x}}(\tau))$};
	\node[text width=2cm] at (4.5,1){\footnotesize $\bs{d}(\ti{\bs{x}}(\tau+\Delta \tau))$};
	\node[text width=0.5cm] at (2.75,2.9){\footnotesize $\bs{n}(\tau)$};
	\node[text width=2cm] at (5,2.5){\footnotesize $\bs{n}(\tau+\Delta \tau)$};
	\node[text width=0.5cm,rotate=-15] at (3.3,1.95){\footnotesize $\bs{t}(\tau)$};
	\end{tikzpicture}
	\caption{Tangent vector for a convex boundary $\Gamma$.}
	\label{fig:TangentVector2_SmoothBoundary}
\end{subfigure}
\caption{Graphical representation of the tangent vector for a concave (left) and convex (right) boundary $\Gamma$.}
\label{fig:TangentVector}
\end{figure}
\begin{lemma}\label{lem:detJac}
Define the region ${\mathcal R}_{\tilde{E}} := \{(\tau, \sigma) : 0 \leq \tau \leq h_{\tilde{E}}, \ 0 \leq \sigma \leq d(\tau) \}$. There exists a constant $b >0$ independent of $\tilde{E}$ such that for $h_{\tilde{E}}$ small enough the Jacobian matrix $\boldsymbol{J \Phi} $ of the mapping \eqref{eq:def-param} satisfies
$$
\left |{\rm det} \, \boldsymbol{J\Phi}(\tau,\sigma) \right |\geq b \qquad \forall (\tau,\sigma) \in {\mathcal R}_{\tilde{E}}\,.
$$
\end{lemma}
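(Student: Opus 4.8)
The plan is to write down the two columns of the Jacobian explicitly, expand the $2\times2$ determinant by multilinearity, and then show that the dominant term stays bounded away from zero while the remaining term vanishes as $h_{\tilde E}\to0$. Differentiating \eqref{eq:def-param} and using $\tx(\tau)=\tx_a+\tau\tilde{\boldsymbol v}$ together with $\bs n(\tau)=\bs n(\bs x(\tau))$, the columns of $\boldsymbol{J\Phi}$ are $\partial_\tau\boldsymbol\Phi=\tilde{\boldsymbol v}+\sigma\,\bs n'(\tau)$ and $\partial_\sigma\boldsymbol\Phi=\bs n(\tau)$, so that
\begin{equation*}
\det\boldsymbol{J\Phi}(\tau,\sigma)=\det[\tilde{\boldsymbol v},\bs n(\tau)]+\sigma\,\det[\bs n'(\tau),\bs n(\tau)]\,.
\end{equation*}
It therefore suffices to prove that the first term is close to $\pm1$ uniformly in $\tilde E$ and that the second is $O(h_{\tilde E}^{1+\zeta})$.

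For the leading term I would introduce the unit tangent $\bs t(\tau)$ to $\G$ at $\bs x(\tau)$, so that $\{\bs t(\tau),\bs n(\tau)\}$ is orthonormal and, $\tilde{\boldsymbol v}$ being a unit vector, $|\det[\tilde{\boldsymbol v},\bs n(\tau)]|=|\tilde{\boldsymbol v}\cdot\bs t(\tau)|=\big(1-(\tilde{\boldsymbol v}\cdot\bs n(\tau))^2\big)^{1/2}$. The key is then to show $\tilde{\boldsymbol v}\cdot\bs n(\tau)\to0$. Writing $h_{\tilde E}\,\tilde{\boldsymbol v}=\tx_b-\tx_a=(\bs x_b-\bs x_a)-(\bs d_b-\bs d_a)$ with $\bs d_a=\bs d(\tx_a)$, $\bs d_b=\bs d(\tx_b)$, the chord term gives $(\bs x_b-\bs x_a)\cdot\bs n(\tau)=O(h_{\tilde E}^2)$, because $\G_S$ is $C^2$ and a chord of a $C^2$ arc deviates from any tangent on the subtended arc only to second order in the arclength (which is $\simeq h_{\tilde E}$); the displacement term gives $(\bs d_b-\bs d_a)\cdot\bs n(\tau)=O(h_T^{1+\zeta})$ by Assumption \ref{ass:d-smallness}. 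Dividing by $h_{\tilde E}\simeq h_T$ (shape regularity) yields $|\tilde{\boldsymbol v}\cdot\bs n(\tau)|=O(h_{\tilde E})+O(h_T^{\zeta})\to0$, hence $|\det[\tilde{\boldsymbol v},\bs n(\tau)]|\to1$.

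For the second term, I would bound $\bs n'(\tau)=\tfrac{d}{d\tau}\bs n(\bs x(\tau))$ uniformly in $h$: the curvature of $\G_S$ is bounded since $\G_S\in C^2$, and the tangential velocity $\tfrac{d}{d\tau}\bs x(\tau)=D\bs{M}_h(\tx(\tau))\,\tilde{\boldsymbol v}$ of the closest-point projection stays bounded as long as $\tx(\tau)$ lies in a fixed tubular neighborhood of $\G_S$ where $\bs{M}_h$ is single-valued and $C^1$ — which is guaranteed for $h_{\tilde E}$ small by Assumption \ref{ass:d-smallness}. With $0\le\sigma\le d(\tau)\le c_d h_T^{1+\zeta}$ this gives $|\sigma\,\det[\bs n'(\tau),\bs n(\tau)]|\le c_d h_T^{1+\zeta}\,\|\bs n'(\tau)\|=O(h_T^{1+\zeta})$. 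Combining the two estimates, for $h_{\tilde E}$ small enough one has $|\det\boldsymbol{J\Phi}|\ge(1-o(1))-O(h_T^{1+\zeta})\ge b>0$ (e.g. $b=\tfrac12$), uniformly in $\tilde E$.

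The main obstacle I anticipate is the uniform control of $\bs n'(\tau)$, equivalently of the derivative of the projection $\bs{M}_h$ along $\tilde E$: one must remain inside a fixed tubular neighborhood of $\G_S$ on which the closest-point projection is smooth with $h$-independent derivative bounds. This is precisely what the $C^2$-regularity of $\G_S$ together with the closeness furnished by Assumption \ref{ass:d-smallness} provide, but it has to be invoked carefully so that every constant is independent of the particular surrogate edge $\tilde E$; the chord-versus-tangent estimate in the leading term is the second delicate point, where the $C^2$ hypothesis on $\G_S$ is again essential.
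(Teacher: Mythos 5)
Your proof is correct and follows essentially the same route as the paper: the same expansion $\det \boldsymbol{J\Phi}(\tau,\sigma) = \det[\tilde{\boldsymbol{v}}, \bs{n}(\tau)] + \sigma \det[\bs{n}'(\tau), \bs{n}(\tau)]$, a lower bound on the leading term via the tangent vector $\bs{t}(\tau)$, and a Frenet/curvature bound showing the correction is $O(\sigma) = O(h_T^{1+\zeta})$. The only difference is one of completeness rather than strategy: where the paper simply asserts $|\tilde{\boldsymbol{v}} \cdot \bs{t}(\tau)| = |\tilde{\bs{n}} \cdot \bs{n}(\tau)| \geq \alpha > 0$ for $h_{\tilde{E}}$ small, you actually prove the sharper quantitative statement $|\tilde{\boldsymbol{v}} \cdot \bs{n}(\tau)| = O(h_{\tilde{E}} + h_T^{\zeta})$ via the chord decomposition $\tx_b - \tx_a = (\bs{x}_b - \bs{x}_a) - (\bs{d}_b - \bs{d}_a)$, and you make explicit the uniform bound on $\Vert \bs{x}'(\tau) \Vert$ (the derivative of the closest-point projection in a fixed tubular neighborhood) that the paper attributes without detail to the smoothness of $\Gamma_S$.
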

\begin{proof}  From \eqref{eq:def-param}, we get
$$
{\rm det} \, \boldsymbol{J\Phi}(\tau,\sigma) = {\rm det} \left[  \!\! \begin{array}{c} \tilde{\boldsymbol{v}} + \sigma \bs{n}'(\tau) \\ \bs{n}(\tau) \end{array} \!\! \right] =
\tilde{v}_1n_2(\tau) - \tilde{v}_2n_1(\tau) + \sigma (n_1'(\tau)n_2(\tau) - n_2'(\tau)n_1(\tau))\,.
$$
Let $\boldsymbol{t}(\tau) = (n_2(\tau), -n_1(\tau))$ be the tangent unit vector to $\Gamma$ at $\bs{x}(\tau)$, with orientation coherent with that of $\bs{x}'(\tau)$, as shown in Figure~\ref{fig:TangentVector}.
By Assumption \ref{ass:d-smallness}, we get
$$
|\tilde{v}_1n_2(\tau) - \tilde{v}_2n_1(\tau)| = |\tilde{\boldsymbol{v}} \cdot \boldsymbol{t}(\tau)| = |\tilde{\boldsymbol{n}} \cdot \boldsymbol{n}(\tau)| \geq \alpha >0
$$
for some $\alpha$ independent of $h_{\tilde{E}}$ and $\tau$, provided $h_{\tilde{E}}$ is sufficiently small. On the other hand, Frenet's formula gives
$$
\bs{n}'(\tau) = - s \, \Vert \bs{x}'(\tau) \Vert \, \kappa(\tau) \, \boldsymbol{t}(\tau)\,,
$$ 
where $\kappa (\tau)$ is the curvature of $\Gamma$ at $\bs{x}'(\tau)$, and $s=-1$ ($s=+1$, resp.) if $\Gamma$ is locally concave (convex, resp.) near $\tilde{E}$ (remember that for us $\bs{n}(\tau)$ is always pointing outward $\Omega$); see again Figure~\ref{fig:TangentVector}. Hence, the quantity
$$
|\sigma (n_1'(\tau)n_2(\tau) - n_2'(\tau)n_1(\tau))| = \sigma \,  \Vert \bs{x}'(\tau) \Vert \, |\kappa(\tau)|
$$
can be made smaller than $\frac{\alpha}2$  by choosing $h_{\tilde{E}}$ (hence, $\sigma$) sufficiently small, since both $\Vert \bs{x}'(\tau) \Vert $ and $\kappa(\tau) $ are bounded due to the assumed smoothness of $\Gamma_S$. 
\end{proof}

If we set ${\mathcal A}_{\tilde{E}} := \boldsymbol{\Phi}({\mathcal R}_{\tilde{E}}) \subset \bar{\Omega}$, Lemma \ref{lem:detJac} guarantees that for $u \in W^{2,p}({\mathcal A}_{\tilde{E}})$ it holds 
$$
 \int_0^{h_{\tilde{E}}} \int_0^{d(\tau)} \Vert {\mathcal H}u(\bs{x}(\tau,s)) \Vert^p  \, {\rm d}s\,  {\rm d}\tau \leq \frac1b \int_{{\mathcal A}_{\tilde{E}}} \Vert {\mathcal H}u(\bs{x}) \Vert^p  \, {\rm d}\bs{x}  
$$

We use this bound in \eqref{eq:bound-Rhu1}. Recalling Assumption \ref{ass:d-smallness}, we derive the existence of a constant $\bar{c}_p >0$ independent of $\tilde{E}$ such that
$$
\Vert h_{\tilde{E}}^{-1/2} R_h u \Vert_{0,\tilde{E}}^2 \leq \bar{c}_p h_{\tilde{E}}^{4 - 4/p +2\zeta'} \Vert {\mathcal H}u \Vert_{L^p({\mathcal A}_{\tilde{E}})}^2 
$$
with $\zeta'=(2-1/p)\zeta$.
Next, let us denote by $\tilde{\Gamma}_{h,S}$ the portion of $\tG$ that is mapped in $\G_S$ by $\bs{M}_h$, i.e.,
\begin{equation}\label{eq:def-GhS}
\tilde{\Gamma}_{h,S} := \{ \tilde{E} \subset \tG : \bs{M}_h(\tilde{E}) \subset \G_S \}
\end{equation}
and let $\Omega_S$ be a neighborhood of $\G_S$ in $\bar\Omega$ containing the union of all sets ${\mathcal A}_{\tilde{E}}$ for $\tilde{E} \subset \tilde{\Gamma}_{h,S}$. Then, if $u \in W^{2,p}(\Omega_S)$ (which is the case if $f \in L^p(\Omega_S)$ and $g \in W^{2-1/p,p}(\Gamma_S)$), we obtain after a new application of H\"older's inequality
\begin{equation}\label{eq:bound-Rhu2}
\begin{split}
\Vert h^{-1/2} R_h u \Vert_{0,\tilde{\Gamma}_{h,S}}^2  &
\leq \ \bar{c}_p  \, h_{\Gamma}^{3 - 2/p +2\zeta'} \!\! \!\! \sum_{\tilde{E} \subset \tilde{\Gamma}_{h,S}} \!\! h_{\tilde{E}}^{1 - 2/p} \Vert {\mathcal H}u \Vert_{L^p({\mathcal A}_{\tilde{E}})}^2 \\
& \leq \bar{c}_p  \, h_{\Gamma}^{3 - 2/p +2\zeta'}  \big( \sum_{\tilde{E} \subset \tilde{\Gamma}_{h,S}} h_{\tilde{E}} \big)^{1-2/p} \Vert {\mathcal H}u \Vert_{L^p(\Omega_S)}^2\,.
\end{split}
\end{equation}
Since the length of $\tilde{\Gamma}_{h,S}$ can be bounded independently of $h$, we arrive at the following result.

\begin{prop}\label{prop:bound-Rh-S}
Let $\G_s \subseteq \G$ be a portion of the physical boundary which admits a $C^2$ parametrization, and let $\tilde{\Gamma}_{h,S}$ be defined in \eqref{eq:def-GhS}. Assume that in a suitable neighborhood $\Omega_S$ of $\Gamma_S$ in $\bar{\Omega}$, which contains the region between $\Gamma_S$ and $\tilde{\Gamma}_{h,S}$, the exact solution satisfies $u \in W^{2,p}(\Omega_S)$ for some $p \in [2,\infty]$. Then, there exists a constant $C_p>0$ independent of $h_\Gamma$ such that if $h_\Gamma$ is sufficiently small one has 
$$
\Vert h^{-1/2} R_h u \Vert_{0,\tilde{\Gamma}_{h,S}} \leq C_p  \, h_{\Gamma}^{3/2 - 1/p +\zeta'}  \Vert {\mathcal H}u \Vert_{L^p(\Omega_S)}^2\,,
$$
where $\zeta'=(2-1/p)\zeta >0$. 
\end{prop}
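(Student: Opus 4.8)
The plan is to assemble the local (single-edge) estimate derived just above the statement into a global one over $\tilde{\Gamma}_{h,S}$. In fact, all the genuinely analytic work has already been carried out in the preceding paragraphs: the Taylor representation of $(R_h u)(\tx(\tau))$ in the normal direction, the pointwise H\"older bound, the integration over one edge yielding \eqref{eq:bound-Rhu1}, and the change of variables $\boldsymbol{\Phi}$ legitimized by the uniform Jacobian lower bound of Lemma \ref{lem:detJac}. Combined with Assumption \ref{ass:d-smallness} (which supplies $d_{\tilde{E}} \leq c_d h_{\tilde{E}}^{1+\zeta}$), these give the per-edge estimate
$$
\Vert h_{\tilde{E}}^{-1/2} R_h u \Vert_{0,\tilde{E}}^2 \leq \bar{c}_p \, h_{\tilde{E}}^{4-4/p+2\zeta'} \, \Vert \mathcal{H}u \Vert_{L^p(\mathcal{A}_{\tilde{E}})}^2 \,, \qquad \zeta' = (2-\tfrac1p)\zeta \,,
$$
so that only a summation over the edges constituting $\tilde{\Gamma}_{h,S}$ remains.

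First I would bound $h_{\tilde{E}} \leq h_\Gamma$ inside the factor carrying the exponent $4-4/p+2\zeta'$, while retaining one free factor $h_{\tilde{E}}^{1-2/p}$, and then apply H\"older's inequality to $\sum_{\tilde{E}} h_{\tilde{E}}^{1-2/p}\Vert \mathcal{H}u \Vert_{L^p(\mathcal{A}_{\tilde{E}})}^2$ with conjugate exponents $\tfrac{p}{p-2}$ and $\tfrac{p}{2}$ (the case $p=2$ being immediate). This separates the purely geometric factor $\big(\sum_{\tilde{E}} h_{\tilde{E}}\big)^{1-2/p}$ from $\big(\sum_{\tilde{E}} \Vert \mathcal{H}u \Vert_{L^p(\mathcal{A}_{\tilde{E}})}^p\big)^{2/p}$, which is precisely the passage recorded in \eqref{eq:bound-Rhu2}. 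Since the base regions $\mathcal{R}_{\tilde{E}}$ are disjoint and $\boldsymbol{\Phi}$ carries them, with bounded Jacobian, onto patches $\mathcal{A}_{\tilde{E}}$ that are essentially non-overlapping and contained in $\Omega_S$, the latter sum collapses into $\Vert \mathcal{H}u \Vert_{L^p(\Omega_S)}^2$.

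The final step is to note that $\sum_{\tilde{E} \subset \tilde{\Gamma}_{h,S}} h_{\tilde{E}}$ is nothing but the total length of $\tilde{\Gamma}_{h,S}$: as the surrogate curve lies inside an $O(h_\Gamma^{1+\zeta})$ strip about the $C^2$ piece $\Gamma_S$, this length is bounded by a constant independent of $h_\Gamma$ (comparable to $|\Gamma_S|$). Absorbing this bounded factor into the constant and taking square roots then produces the stated estimate with exponent $3/2 - 1/p + \zeta'$. The only point demanding genuine care — and hence the main (albeit mild) obstacle — is the finite-overlap property of the patches $\mathcal{A}_{\tilde{E}}$ invoked when collapsing the $L^p$-sum: one must verify, using shape-regularity together with the alignment of $\bs{d}$ with $\bs{n}$ along the smooth piece $\Gamma_S$, that each point of $\Omega_S$ belongs to a uniformly bounded number of patches, so that $\sum_{\tilde{E}} \Vert \mathcal{H}u \Vert_{L^p(\mathcal{A}_{\tilde{E}})}^p \lesssim \Vert \mathcal{H}u \Vert_{L^p(\Omega_S)}^p$. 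With all the heavy estimates already established, the proof reduces to this bookkeeping.
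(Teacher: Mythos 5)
Your proposal is correct and follows essentially the same route as the paper: the paper's own proof is precisely the chain \eqref{eq:bound-Rhu1}--\eqref{eq:bound-Rhu2} you reconstruct, namely the split $h_{\tilde{E}}^{4-4/p+2\zeta'}=h_{\tilde{E}}^{3-2/p+2\zeta'}\,h_{\tilde{E}}^{1-2/p}$ with $h_{\tilde{E}}\leq h_\Gamma$, H\"older with exponents $\tfrac{p}{p-2}$ and $\tfrac{p}{2}$, the collapse of the patch sum into $\Vert {\mathcal H}u \Vert_{L^p(\Omega_S)}^2$, the observation that $\sum_{\tilde{E}} h_{\tilde{E}}$ (the length of $\tilde{\Gamma}_{h,S}$) is bounded independently of $h$, and a final square root (which, as your argument correctly yields, gives $\Vert {\mathcal H}u \Vert_{L^p(\Omega_S)}$ to the first power -- the exponent $2$ in the stated inequality is a typo in the paper). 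Your explicit flagging of the finite-overlap property of the patches ${\mathcal A}_{\tilde{E}}$ is a point the paper leaves tacit, and your proposed justification via shape regularity and the alignment of $\bs{d}$ with $\bs{n}$ on the smooth piece is the right way to close it.
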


In particular, for $p=2$, neglecting the contribution from $\zeta'$, we obtain
$$
\Vert h^{-1/2} R_h u \Vert_{0,\tilde{\Gamma}_{h,S}} \leq C_2  \, h_{\Gamma} \, |u|_{2,\Omega_S} \,,
$$
whereas if $u \in W^{2,p}(\Omega_S)$ for any arbitrarily large $p$, then we can take $p=1/\zeta'$ and get
$$
\Vert h^{-1/2} R_h u \Vert_{0,\tilde{\Gamma}_{h,S}} \leq C_p  \, h_{\Gamma}^{3/2} \, |u|_{W^{2,p}(\Omega_S)} \,.
$$

At last, we briefly examine the three-dimensional case. Let $\tilde{F} \subset \tG$ be a face of a tetrahedron $T=T_{\tilde{F}} \in  \tT^b$ such that $\bs{M}_h(\tilde{F}) \subset \G_S$.
If $\tx_a, \, \tx_b, \, \tx_c$ be the vertices of $\tilde{F}$, we introduce the parametrization of $\tilde{F}$ given by $\tx(\tau_1,\tau_2)= \tx_a + \tau_1 \tilde{\boldsymbol{v}}_1 + \tau_2 \tilde{\boldsymbol{v}}_2$, where $ \tilde{\boldsymbol{v}}_i$ are unit vectors parallel to the edges of  $\tilde{F}$ meeting at $\tx_a$, and $(\tau_1,\tau_2)$ varies in a triangular region $B_{\tilde{F}}$ of the plane, with diameter $h_{\tilde{F}} \leq h_T$. Then, \eqref{eq:def-param} is replaced by
$$
\bs{x}(\tau_1,\tau_2,\sigma) = \boldsymbol{\Phi}(\tau_1,\tau_2,\sigma) := \tx(\tau_1,\tau_2) + \sigma \bs{n}(\tau_1,\tau_2)\,, 
$$
with $(\tau_1,\tau_2)\in B_{\tilde{F}}, \ 0 \leq \sigma \leq d(\tau_1,\tau_2) := \Vert \bs{d}(\tx(\tau_1,\tau_2)) \Vert$, 
which leads to the analogue of \eqref{eq:bound-Rhu1}, i.e., 
$$
\Vert R_h u \Vert_{0,\tilde{F}}^2  \leq c_p d_{\tilde{F}}^{4-2/p} h_{\tilde{F}}^{2(1-2/p)} \left( \int_{B_{\tilde{F}}} \int_0^{d(\tau_1,\tau_2)} \Vert {\mathcal H}u(\bs{x}(\tau_1,\tau_2,s)) \Vert^p  \, {\rm d}s\,  {\rm d}\tau_1\, {\rm d}\tau_2 \right)^{2/p} \,,
$$
where $d_{\tilde{F}} := \max_{(\tau_1,\tau_2)\in B_{\tilde{F}}} d(\tau_1,\tau_2)$. The analogue of Lemma \ref{lem:detJac} holds as well under Assumption \ref{ass:d-smallness}; indeed, note that 
$$
{\rm det} \, \left[ \begin{array}{c} \tilde{\boldsymbol{v}}_1 \\  \tilde{\boldsymbol{v}}_2 \\ \bs{n}(\tau_1,\tau_2) \end{array} \right] =  (\tilde{\boldsymbol{v}}_1 \wedge \tilde{\boldsymbol{v}}_2 ) \cdot \bs{n}(\tau_1,\tau_2) = \tilde{\bs{n}} \cdot \bs{n}(\tau_1,\tau_2) \,.
$$
This allows us to proceed as in the two-dimensional case, arriving at the same conclusion  as the one given in Proposition \ref{prop:bound-Rh-S}.

\subsection{Analysis near a corner in a polygonal domain} \label{sec:remainder-corner}

Assume now that $\G$ contains a corner point $C$. More precisely, assume that there exists a ball $B_c(C)$ centered at $C$ and of radius $c>0$ such that $\G_C := \G \cap B_c(C)$ is formed by two smooth branches that meet only at $C$; let $\omega \in (0,2\pi)$ be the angle formed by the two tangents to the branches at $C$, measured inside $\Omega$. To keep the forthcoming analysis simple, we actually assume that the two branches are straight lines inside $B_c(C)$; this is reasonable, since in general they approach two straight lines if the radius $c$ is taken small enough. Treating the general case would lead to results similar to those obtained below, yet with a technical burden.

It is well known \cite[Theorem 4.4.3.7]{Grisvard} that in the neighborhood $\Omega_C$ of $C$ in $\Omega$ (we can take $\Omega_C := \Omega \cap B_c(C)$ for a suitable $c$), the solution $u$
can be split into regular and singular parts, say
\begin{equation}\label{eq:decomp-u}
u = u_\text{reg} + u_\text{sing} \,.
\end{equation} 
The function $u_\text{reg}$ satisfies $-\Delta u_\text{reg} =f$ in $\Omega_C$, $u_\text{reg}=g$ on $\G_C$, and belongs to $W^{2,p}(\Omega_C)$ if we assume that $f \in L^p(\Omega)$ and $g$ is the trace on $\G_C$ of a function in $W^{2,p}(\Omega_C)$; on the other hand, $u_\text{sing}$ is harmonic in $\Omega_C$, vanishes on $\G_C$, and presents a asingularity in the radial direction while approaching $C$. By linearity, we can split the remainder as
$$
R_h u = R_h u_\text{reg} + R_h u_\text{sing} \,.
$$ 
The regular part behaves as investigated in Sect. \ref{sec:remainder-smooth}, i.e., an equivalent form of Proposition \ref{prop:bound-Rh-S} applies to $R_h u_\text{reg}$; thus, hereafter we focus on $R_h u_\text{sing}$.
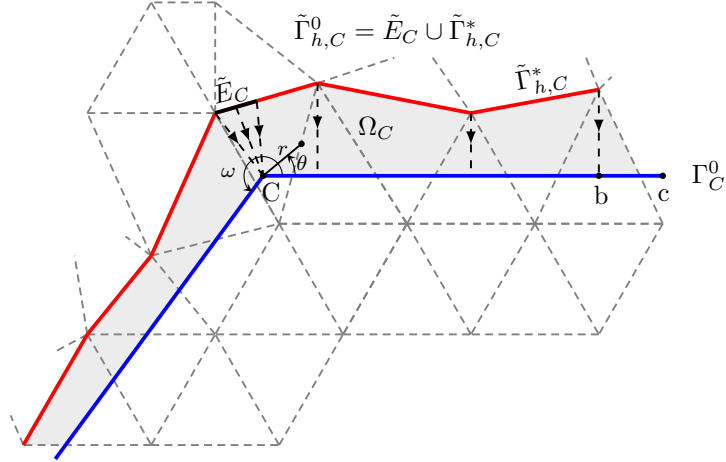
\begin{figure}
	\centering
	\begin{tikzpicture}[scale=0.85]
	\draw [draw=none,name path=surr] plot coordinates { (-2,-3.4641) (-1,-1.73205) (0,-0.5) (1,1.73205) (2.6,2.2) (5,1.73205) (7,2.1) (7.65,0.75) };
	\draw [draw=none,name path=true] plot coordinates { (-1.3,-3.4641) (1.75,0.75) (8,0.75) };
	\tikzfillbetween[of=true and surr,split]{gray!15!};
	\draw[line width = 0.25mm,densely dashed,gray] (-1,1.73205) -- (0,3.4641);
	\draw[line width = 0.25mm,densely dashed,gray] (0,3.4641) -- (2,0);
	\draw[line width = 0.25mm,densely dashed,gray] (1,1.73205) -- (1,3.4641);
	\draw[line width = 0.25mm,densely dashed,gray] (1,3.4641) -- (0,3.4641);
	\draw[line width = 0.25mm,densely dashed,gray] (1,3.4641) -- (2.6,2.2);
	\draw[line width = 0.25mm,densely dashed,gray] (2.6,2.2) -- (3.8,2.6);
   \draw[line width = 0.25mm,densely dashed,gray] (4.4,2.6) -- (5,1.73205);
    \draw[line width = 0.25mm,densely dashed,gray] (7,2.1) -- (6.8,2.6);
    \draw[line width = 0.25mm,densely dashed,gray] (7,2.1) -- (7.2,2.3);
	\draw[line width = 0.25mm,densely dashed,gray] (0,-0.5) -- (-1,1.73205);
	\draw[line width = 0.25mm,densely dashed,gray] (-1,1.73205) -- (1,1.73205);
	\draw[line width = 0.25mm,densely dashed,gray] (0,-0.5) -- (2,0);
	\draw[line width = 0.25mm,densely dashed,gray] (2,0) -- (1,1.73205);
	\draw[line width = 0.25mm,densely dashed,gray] (1,1.73205) -- (0,-0.5);
	\draw[line width = 0.25mm,densely dashed,gray] (2,0) -- (2.6,2.2);
	\draw[line width = 0.25mm,densely dashed,gray] (2.6,2.2) -- (1,1.73205);
	\draw[line width = 0.25mm,densely dashed,gray] (2,0) -- (4,0);
	\draw[line width = 0.25mm,densely dashed,gray] (4,0) -- (2.6,2.2);
	\draw[line width = 0.25mm,densely dashed,gray] (4,0) -- (2.6,2.2);
	\draw[line width = 0.25mm,densely dashed,gray] (2.6,2.2) -- (5,1.73205);
	\draw[line width = 0.25mm,densely dashed,gray] (5,1.73205) -- (4,0);
	\draw[line width = 0.25mm,densely dashed,gray] (4,0) -- (6,0);
	\draw[line width = 0.25mm,densely dashed,gray] (6,0) -- (5,1.73205);
	\draw[line width = 0.25mm,densely dashed,gray] (6,0) -- (7,2.1);
	\draw[line width = 0.25mm,densely dashed,gray] (7,2.1) -- (5,1.73205);
	\draw[line width = 0.25mm,densely dashed,gray] (6,0) -- (8,0);
	\draw[line width = 0.25mm,densely dashed,gray] (8,0) -- (7,2.1);
	\draw[line width = 0.25mm,densely dashed,gray] (0,-0.5) -- (-1,-1.73205);
	\draw[line width = 0.25mm,densely dashed,gray] (-1,-1.73205) -- (1,-1.73205);
	\draw[line width = 0.25mm,densely dashed,gray] (2,0) -- (1,-1.73205);
	\draw[line width = 0.25mm,densely dashed,gray] (1,-1.73205) -- (0,-0.5);
	\draw[line width = 0.25mm,densely dashed,gray] (2,0) -- (3,-1.73205);
	\draw[line width = 0.25mm,densely dashed,gray] (3,-1.73205) -- (1,-1.73205);
	\draw[line width = 0.25mm,densely dashed,gray] (4,0) -- (3,-1.73205);
	\draw[line width = 0.25mm,densely dashed,gray] (2,0) -- (4,0);
	\draw[line width = 0.25mm,densely dashed,gray] (4,0) -- (3,-1.73205);
	\draw[line width = 0.25mm,densely dashed,gray] (3,-1.73205) -- (5,-1.73205);
	\draw[line width = 0.25mm,densely dashed,gray] (5,-1.73205) -- (4,0);
	\draw[line width = 0.25mm,densely dashed,gray] (4,0) -- (6,0);
	\draw[line width = 0.25mm,densely dashed,gray] (6,0) -- (5,-1.73205);
	\draw[line width = 0.25mm,densely dashed,gray] (6,0) -- (7,-1.73205);
	\draw[line width = 0.25mm,densely dashed,gray] (7,-1.73205) -- (5,-1.73205);
	\draw[line width = 0.25mm,densely dashed,gray] (6,0) -- (8,0);
	\draw[line width = 0.25mm,densely dashed,gray] (8,0) -- (7,-1.73205);
	\draw[line width = 0.25mm,densely dashed,gray] (-1,-1.73205) -- (-1.2,-0.5);
	\draw[line width = 0.25mm,densely dashed,gray] (0,-0.5) -- (-0.4,-0.2);
	\draw[line width = 0.25mm,densely dashed,gray] (-1,-1.73205) -- (-1.5,-2);
	\draw[line width = 0.25mm,densely dashed,gray] (0,-3.4641) -- (-2,-3.4641);
	\draw[line width = 0.25mm,densely dashed,gray] (-2,-3.4641) -- (-1,-1.73205);
	\draw[line width = 0.25mm,densely dashed,gray]  (-1,-1.73205) -- (0,-3.4641);
	\draw[line width = 0.25mm,densely dashed,gray] (0,-3.4641) -- (1,-1.73205);
	\draw[line width = 0.25mm,densely dashed,gray] (0,-3.4641) -- (2,-3.4641);
	\draw[line width = 0.25mm,densely dashed,gray] (2,-3.4641) -- (1,-1.73205);
	\draw[line width = 0.25mm,densely dashed,gray] (2,-3.4641) -- (3,-1.73205);
	\draw[line width = 0.25mm,densely dashed,gray] (-2,-3.4641) -- (-2.2,-3);
	\draw[line width = 0.5mm,blue] (-1.5,-3.681818) -- (1.75,0.75);
	\draw[line width = 0.5mm,blue] (1.75,0.75) -- (8,0.75);
	\draw[line width = 0.5mm,red] (1,1.73205) -- (2.6,2.2);
	\draw[line width = 0.5mm,red] (2.6,2.2) -- (5,1.73205);
	\draw[line width = 0.5mm,red] (5,1.73205) --  (7,2.1);
	\draw[line width = 0.5mm,red] (1,1.73205) -- (0,-0.5);
	\draw[line width = 0.5mm,red] (0,-0.5) -- (-1,-1.73205);
	\draw[line width = 0.5mm,red] (-1,-1.73205) -- (-2,-3.4641);
	\draw[line width = 0.5mm,black] (1,1.73205) -- (1.65,1.92);
	\draw[->,line width = 0.25mm,black,dashed,-latex] (1,1.73205) -- (1.375,1.241025);
	\draw[line width = 0.25mm,black,dashed] (1.375,1.241025) -- (1.75,0.75);
	\draw[->,line width = 0.25mm,black,dashed,-latex] (1.65,1.92) -- (1.7,1.335);
	\draw[-,line width = 0.25mm,black,dashed] (1.7,1.335) -- (1.75,0.75);
	\draw[->,line width = 0.25mm,black,dashed,-latex] (1.325,1.826025) -- (1.537,1.2880125);
	\draw[-,line width = 0.25mm,black,dashed] (1.5375,1.2880125) -- (1.75,0.75);
	\draw[->,line width = 0.25mm,black,dashed,-latex] (2.6,2.2) -- (2.6,1.475);
	\draw[-,line width = 0.25mm,black,dashed] (2.6,1.475) -- (2.6,0.75);
	\draw[->,line width = 0.25mm,black,dashed,-latex] (5,1.73205) -- (5,1.241025);
	\draw[-,line width = 0.25mm,black,dashed] (5,1.241025) -- (5,0.75);
	\draw[->,line width = 0.25mm,black,dashed,-latex] (7,2.1) -- (7,1.425);
	\draw[-,line width = 0.25mm,black,dashed] (7,1.425) -- (7,0.75);
	\node[text width=0.5cm,rotate=15] at (1.25,2.1) {$\ti{E}_{C}$};
	\node[text width=0.5cm,rotate=10] at (6,2.25) {$\ti{\Gamma}^{*}_{h,C}$};
	\node[text width=3cm] at (4,3) {$\ti{\Gamma}^{0}_{h,C} = \ti{E}_{C} \cup \ti{\Gamma}^{*}_{h,C}$};
	\node[text width=3cm] at (5,1.5) {$\Omega_{C}$};
	\node[text width=0.5cm] at (8.75,0.75) {$\Gamma^{0}_{C}$};
	\fill (7,0.75) circle (0.5mm);
	\node[text width=0.3cm] at (7.1,0.45) {b};
	\fill (8,0.75) circle (0.5mm);
	\node[text width=0.3cm] at (8.1,0.45) {c};
	\fill (1.75,0.75) circle (0.5mm);
	\node[text width=0.3cm] at (1.9,0.5) {C};
	\draw[black,->,-stealth] (2.05,0.75) arc (0:235:0.3);
	\node[text width=0.3cm] at (1.27,0.8) {\footnotesize $\omega$};
	\draw[-,line width = 0.25mm,black] (1.75,0.75) -- (2.35,1.25);
	\node[text width=0.3cm] at (2.15,1.15) {\footnotesize $r$};
	\fill (2.35,1.25) circle (0.5mm);
	\draw[black,->,-stealth] (2.25,0.75) arc (0:42:0.5);
	\node[text width=0.3cm] at (2.45,0.95) {\footnotesize $\theta$};
	\end{tikzpicture}
	\caption{Graphical representation of a non-smooth portion of the boundary $\Gamma$, surrogate edges $\ti{\Gamma}^{0}_{h,C}$ and the parametrization of $\Om_{C}$.}
	\label{fig:TaylorRemainder_Corner}
\end{figure}
To perform our study, it is convenient to choose a Cartesian coordinate system centered at $C$, with one of the branches of $\G_C$ sitting on the horizontal axis and the other one obtained by a counterclockwise rotation of angle $\omega$ inside $\Omega_C$. In polar coordinates $(r, \theta)$, we thus have $\Omega_C = \{(r,\theta) : 
0 < r  < c, \ 0 < \theta < \omega \}$ (see Figure~\ref{fig:TaylorRemainder_Corner}). In these coordinates, if we set $\lambda_m = \frac{m\pi}\omega > \frac12$ and $M:=\{ m \in \mathbb{N} : \lambda_m < \frac2q, \, \lambda_m \not = 1 \}$, with $\frac1p+\frac1q=1$, and we assume that $\frac{2\omega}{q\pi}$ is not an integer, then $u_\text{sing}$ has the form
\begin{equation}\label{eq:sing-u}
u_\text{sing}(r,\theta) = \sum_{m \in M} C_m \, r^{\lambda_m}\sin(\lambda_m \theta) \,,
\end{equation} 
where $C_m=C_m(f,g)$ are real coefficients depending linearly upon the data $f$ and $g$. Note that $M$ may be empty, in which case the singular part is zero. Otherwise, the behavior of $u_\text{sing}$ near the corner is dictated by the smallest value of $\lambda_m$, which we denote by $\lambda$. Thus, we are led to consider the remainder $R_h w$ in the Taylor expansion of 
\begin{equation}\label{eq:2d-singular}
w(r,\theta) := r^\lambda \sin(\lambda \, \theta) \,, \qquad \lambda > \tfrac12 \,.
\end{equation} 

Let us assume that there exists a non-empty portion $\tilde{\Gamma}_{h,C}$  of $\tG$ that is mapped into $\G_C$ by $\bs{M}_h$. Since the radius $c$ of the ball $B_c(C)$ is fixed, 
Assumption \ref{ass:d-smallness} implies that $\tilde{\Gamma}_{h,C} \subset \Omega_C$ for sufficiently small $h_\Gamma$. In this condition, we focus on the portion $\tilde{\Gamma}_{h,C}^0$ of $\tilde{\Gamma}_{h,C}$ that is mapped into the horizontal part of $\G_C$, i.e. $\G_C^0 := \{(r,\theta) : 0 \leq r \leq c,  \theta=0\}$; the remaining portion of $\tilde{\Gamma}_{h,C}$ can be handled similarly. We may assume that a (possibly empty) part of an edge $\tilde{E} \subset \tilde{\Gamma}_{h,C}^0$ is mapped by $\bs{M}_h$ to the origin $C$, whereas the complementary part in $\tilde{\Gamma}_{h,C}^0$ is mapped in $\G_C^0 \setminus \{C\}$ by the closest-point projection (see again Figure~\ref{fig:TaylorRemainder_Corner}).

Let $\tilde{E}_C$ be the part of $\tilde{E}$ which is mapped to $C$; let us introduce a linear parametrization, say $\tilde{E}_C = \{(r(\tau),\theta(\tau)), 0 \leq \tau \leq \tau_{\max } \}$, with $\tau_{\max } \leq h_E$. Therein, one has
$$
(R_h w)(r,\theta) = -(S_h w)(r,\theta) =  - \left(w(r,\theta) + \frac{\partial w}{\partial r}(r,\theta)(-r) \right) = (\lambda -1)\,  r^\lambda \sin(\lambda \, \theta) \,,
$$
whence
$$
\Vert R_h w \Vert_{0,\tilde{E}_C}^2 = (\lambda -1)^2 \int_0^{\tau_{\max }} r^{2 \lambda}(\tau) \sin^2(\lambda \, \theta(\tau))\, {\rm d}\tau \leq (\lambda -1)^2 \max_\tau   r^{2\lambda} \, \tau_{\max }.
$$
Thus, by Assumption \ref{ass:d-smallness}, we get the existence of a constant $K_1>0$ such that
\begin{equation}\label{eq:bound-Rh-corner}
\Vert h^{-1/2} R_h w \Vert_{0,\tilde{E}_C} \leq K_1 \, h_\Gamma^{\lambda(1+\zeta)}\;. 
\end{equation}

The part $\tilde{\Gamma}_{h,C}^\star$ of $\tilde{\Gamma}_{h,C}^0$ that is mapped on $\G_C^0 \setminus \{C\}$ by the closest-point projection may be parametrized as $\tilde{\Gamma}_{h,C}^\star = \{\tx = (x,\gamma(x)) : 0 < x \leq  b \}$, where $\gamma$ is a piecewise affine function and $0 < b \leq c$ with $c-b \leq h_\G$. Therein, one has, with $y=\gamma(x)$,
$$
(R_h w)(x,y) = -(S_h w)(x,y) =  - (w(x,y) + \frac{\partial w}{\partial y}(x,y)(-y) ) \,.
$$
Using $r=(x^2+y^2)^{1/2}$ and $\theta = \arctan \frac{y}{x}$ to express $w$ in Cartesian coordinates, one gets
\begin{equation}\label{eq:bound-Rh-nearC-0}
(R_h w)(x,y) = r^{\lambda -2} \left(\lambda y^2 \sin(\lambda \, \theta) +  \lambda x y \cos(\lambda \, \theta) \right) - r^\lambda \sin(\lambda \, \theta) \,.
\end{equation}
To bound the norm of $R_h w$ on $\tilde{\Gamma}_{h,C}^\star$ we observe that the distance vector $\boldsymbol{d}=\boldsymbol{x}-\tilde{\boldsymbol{x}}$ can be written on $\tilde{\Gamma}_{h,C}^\star$ as $\boldsymbol{d}(\tilde{\boldsymbol{x}})=\gamma(x) \boldsymbol{\nu}$ with $ \boldsymbol{\nu}=(0,-1)$; thus, let us introduce the quantity 
$$
d := \sup_{0 < x \leq b} \gamma(x)    = \sup_{0 < x \leq b} \Vert \bs{d}(x,\gamma(x)) \Vert  \leq  c_d h_\Gamma^{1+\zeta}
$$
Then, we split 
\begin{equation}\label{eq:bound-Rh-nearC-1}
\Vert R_h w \Vert_{0, \tilde{\Gamma}_{h,C}^\star}^2 = \int_0^b [(R_h w)(x,\gamma(x))]^2 \, {\rm d}x = \int_0^d+\int_d^b [(R_h w)(x,\gamma(x))]^2 \, {\rm d}x \;.
\end{equation}
In the interval $0 \leq x \leq d$, we use $\gamma^2(x)\leq x^2+\gamma^2(x)=r^2$ and $\sin^2(\lambda \theta)\leq 1$, $\cos^2(\lambda \theta)\leq 1$ to get
\begin{equation}\label{eq:bound-Rh-nearC-2}
\int_0^d [(R_h w)(x,\gamma(x))]^2 \, {\rm d}x  
\lesssim \int_0^d  (x^2 + \gamma^2(x))^\lambda   \, {\rm d}x \lesssim d^{2\lambda+1}\,.
\end{equation}
In the interval $d \leq x \leq b$, we write \eqref{eq:bound-Rh-nearC-0} as
$$
(R_h w)(x,y) = r^{\lambda -2} \lambda y^2 \sin(\lambda \, \theta) +  r^{\lambda -2} \left(\lambda x y \cos(\lambda \, \theta) - r^2 \sin(\lambda \, \theta) \right) \;.
$$
On the one hand, using now $x^2+\gamma^2(x)\geq x^2$, we get
\begin{equation}\label{eq:bound-Rh-nearC-3}
\int_d^b r^{2\lambda -4} \gamma^4(x) \sin^2(\lambda \, \theta)   \, {\rm d}x \lesssim d^4 \int_d^b x^{2\lambda-4}  \, {\rm d}x  \lesssim d^{2\lambda+1}\,.
\end{equation}
On the other hand, observing that $0 \leq \frac{\gamma(x)}x \leq 1$ and using the Taylor expansions of $\arctan, \ \sin, \ \cos$ at the origin, we easily see that
$$
\lambda x \gamma(x) \cos(\lambda \, \theta) - (x^2+\gamma^2(x)) \sin(\lambda \, \theta) \  \simeq \ \lambda \frac{\gamma^3(x)}x  \qquad  \text{uniformly in $d \leq x\leq b$}
 $$ 
with 
\begin{equation}\label{eq:bound-Rh-nearC-4}
\int_d^b r^{2\lambda -4} \frac{\gamma^6(x)}{x^2}   \, {\rm d}x \lesssim d^6 \int_d^b x^{2\lambda-6}  \, {\rm d}x  \lesssim d^{2\lambda+1}\,.
\end{equation}

Summarizing, from \eqref{eq:bound-Rh-corner} and \eqref{eq:bound-Rh-nearC-1}-\eqref{eq:bound-Rh-nearC-4}, and from Assumption \ref{ass:d-smallness}, we easily get the existence of a constant $K_4$ independent of $h_\Gamma$ such that
\begin{equation}\label{eq:bound-Rh-nearC-5}
\Vert h^{-1/2}R_h w \Vert_{0, \tilde{\Gamma}_{h,C}^0} \leq K_4 h_\Gamma^{\lambda(1+\zeta)} \;.
\end{equation}

At last, recalling \eqref{eq:decomp-u} and \eqref{eq:sing-u}, we arrive at the following counterpart of Proposition \ref{prop:bound-Rh-S}.
\begin{prop}\label{prop:bound-Rh-C}
Suppose that $\Gamma$ has a corner point $C$ forming an angle $\omega \in (0, 2\pi)$, while away of $C$ it admits a $C^2$-parametrization. Let $\Omega_C :=\Omega \cap B_c(C)$ be a neighborhood of $C$ in $\Omega$, where $B_c(C)$ is the ball of radius $c>0$ centered at $C$ and $c$ is assumed to be sufficiently small but independent of $h$; let $\tilde{\Gamma}_{h,C} := \tG \cap B_c(C)$ be non-empty and mapped by $\bs{M}_h$ into $\Gamma\cap B_c(C)$. Define
$$ 
M:=\{ m \in \mathbb{N} : \lambda_m := \frac{m\pi}\omega < \frac2q, \, \lambda_m \not = 1 \}\,, \qquad   \tfrac1p+\tfrac1q=1 \,,
$$
and set
\begin{equation}\label{eq:def-lambdaC}
\lambda_C := \min \{  \lambda_m : m \in M\}\,.
\end{equation}
(with the usual convention that $\lambda_C=\infty$ if $M$ is empty).
Then, there exists a constant $K_C(u,f,g)$, depending on the norms of $u$ and the data $f$, $g$ in $\Omega_C$ but not on $h_\Gamma$, such that
\begin{equation}\label{eq:bound-Rh-nearC-6}
\Vert h^{-1/2} R_h u \Vert_{0,\tilde{\Gamma}_{h,C}} \leq K_C(u, f,g)  \, h_{\Gamma}^{\min(3/2 - 1/p,\lambda_C) +\zeta' }
\end{equation}
with $\zeta' =\min(2-1/p,\lambda_C)\, \zeta$.
\end{prop}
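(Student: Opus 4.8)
The plan is to combine the regular/singular splitting \eqref{eq:decomp-u} with the linearity of the remainder, $R_h u = R_h u_\text{reg} + R_h u_\text{sing}$, estimate the two pieces by the two mechanisms already developed, and then merge the resulting exponents into the single clean bound \eqref{eq:bound-Rh-nearC-6}.

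For the regular part I would simply invoke Proposition~\ref{prop:bound-Rh-S}. Since $u_\text{reg} \in W^{2,p}(\Omega_C)$ and both branches of $\G_C$ are $C^2$ away from $C$, the smooth-boundary analysis of Section~\ref{sec:remainder-smooth} applies on $\tilde{\Gamma}_{h,C}$ without modification (the meeting point of the branches produces no extra singularity, because $u_\text{reg}$ is globally $W^{2,p}$ on $\Omega_C$). This gives
\[
\Vert h^{-1/2} R_h u_\text{reg} \Vert_{0,\tilde{\Gamma}_{h,C}} \lesssim h_\Gamma^{\,3/2 - 1/p + (2-1/p)\zeta}\, \Vert \mathcal{H} u_\text{reg} \Vert_{L^p(\Omega_C)}\,,
\]
so the regular contribution carries the exponent $3/2 - 1/p + (2-1/p)\zeta$.

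For the singular part I would use the explicit form \eqref{eq:sing-u}, $u_\text{sing} = \sum_{m\in M} C_m\, r^{\lambda_m}\sin(\lambda_m\theta)$, and apply to each summand the model estimate already established for $w = r^\lambda\sin(\lambda\theta)$. Splitting $\tilde{\Gamma}_{h,C}$ into the branch $\tilde{\Gamma}^0_{h,C}$ treated in \eqref{eq:bound-Rh-corner}--\eqref{eq:bound-Rh-nearC-5} and the symmetric branch mapped onto the rotated side $\theta=\omega$ --- which is handled identically after a rotation of coordinates, since $\sin(\lambda_m\omega)=\sin(m\pi)=0$ makes $w$ vanish on both sides --- the bound \eqref{eq:bound-Rh-nearC-5} yields $\Vert h^{-1/2} R_h(r^{\lambda_m}\sin(\lambda_m\theta))\Vert_{0,\tilde{\Gamma}_{h,C}} \lesssim h_\Gamma^{\lambda_m(1+\zeta)}$ for each $m$. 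Summing over the finite set $M$ and retaining the smallest exponent $\lambda_C=\min_{m\in M}\lambda_m$ from \eqref{eq:def-lambdaC}, I obtain $\Vert h^{-1/2} R_h u_\text{sing}\Vert_{0,\tilde{\Gamma}_{h,C}} \lesssim \big(\sum_{m\in M}|C_m|\big)\, h_\Gamma^{\lambda_C(1+\zeta)}$, the constant absorbing the data-dependent coefficients $C_m=C_m(f,g)$.

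Adding the two estimates and invoking $h_\Gamma\leq 1$ together with the elementary inequality $\min(a_1+b_1,a_2+b_2)\geq \min(a_1,a_2)+\min(b_1,b_2)$ (applied to $a_1=\lambda_C$, $b_1=\lambda_C\zeta$, $a_2=3/2-1/p$, $b_2=(2-1/p)\zeta$) collapses the sum $h_\Gamma^{3/2-1/p+(2-1/p)\zeta}+h_\Gamma^{\lambda_C+\lambda_C\zeta}$ into the single power $h_\Gamma^{\min(3/2-1/p,\lambda_C)+\zeta'}$ with $\zeta'=\min(2-1/p,\lambda_C)\zeta$, which is exactly \eqref{eq:bound-Rh-nearC-6}; the constant $K_C(u,f,g)$ collects $\Vert\mathcal{H}u_\text{reg}\Vert_{L^p(\Omega_C)}$ and $\sum_m|C_m(f,g)|$. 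The real work --- the delicate corner integrals --- has already been carried out in \eqref{eq:bound-Rh-nearC-1}--\eqref{eq:bound-Rh-nearC-4}, so the main obstacle here is purely organizational: keeping track of the two competing exponents and making sure the clean form \eqref{eq:bound-Rh-nearC-6} is stated as an upper bound (valid only because $h_\Gamma\leq 1$) rather than as an equality.
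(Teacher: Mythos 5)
Your proposal is correct and follows essentially the same route as the paper: split $u=u_\text{reg}+u_\text{sing}$ via \eqref{eq:decomp-u}, apply (an equivalent form of) Proposition~\ref{prop:bound-Rh-S} to the regular part, apply the model estimate \eqref{eq:bound-Rh-nearC-5} term by term to the expansion \eqref{eq:sing-u} (with the second branch handled by rotation), and merge the exponents $3/2-1/p+(2-1/p)\zeta$ and $\lambda_C(1+\zeta)$ via $\min(a_1+b_1,a_2+b_2)\geq\min(a_1,a_2)+\min(b_1,b_2)$ for $h_\Gamma\leq 1$. The only cosmetic difference is that the paper phrases the applicability of Proposition~\ref{prop:bound-Rh-S} near the corner as ``an equivalent form'' (since $\bs{M}_h$ is not exactly the closest-point projection on the edge mapped to $C$), a nuance your ``without modification'' slightly understates but which does not affect the argument.
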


\begin{rem}\label{rem:logr-case}
{\rm 
If $\frac{2\omega}{q\pi}$ is an integer, $r^\lambda$ should be replaced by $r^\lambda \log r$ in \eqref{eq:2d-singular}. Carrying on a similar analysis, it is easily seen that the logarithmic term can be absorbed by taking any $\zeta' < \min(2-1/p,\lambda_C)\, \zeta$ in \eqref{eq:bound-Rh-nearC-6}. For the sake of simplicity, we will not consider this exceptional case further on.
}
\end{rem}

\subsection{Analysis near an edge or a vertex in a polyhedron} \label{sec:remainder-3D}

As in the two-dimensional case, the solution $u$ can be split into a regular part $u_\text{reg}$ and a singular part $u_\text{sing}$, where the latter is a sum of contributions associated with each edge and each vertex of the polyhedron. Any such contribution is locally a linear combination of terms of the form
\begin{equation}\label{eq:3d-singular}
w(r,\sigma) = r^\lambda \varphi(\sigma) 
\end{equation}
(possibly with $r^\lambda$ multiplied by logarithmic terms), 
where $(r,\sigma)$ form a system of cylindrical coordinates around the edge, or a system of polar coordinates around the vertex, $\varphi$ is an eigenfunction of a Laplace or Laplace-Beltrami operator in the variables $\sigma$ with Dirichlet boundary conditions, and $\lambda >0$ is an expression depending upon the corresponding eigenvalue. We refer to \cite{DaugeBook,DaugeNotes,Maz'yaRossmann} for more details; see also \cite{DemlowNotes}. Precisely, near an edge of aperture $\omega$, the strongest singularity of type \eqref{eq:3d-singular} takes exactly the form \eqref{eq:2d-singular} with $\lambda=\frac\pi\omega$.  At a vertex, if $K$ is the infinite cone that locally represents $\Omega$ near the vertex and $S$ is the intersection of $K$ with the unit sphere, then the smallest $\lambda$ in \eqref{eq:3d-singular} is the positive root of a second-degree algebraic equation whose right-hand side is the smallest eigenvalue of the Laplace-Beltrami operator on $S$ with Dirichlet conditions on $\partial S$.

In order to estimate $R_h w$, one can adapt the two-dimensional arguments. In particular, one may distinguish between the portion of $\tG$ which is mapped to the vertex, the portion which is mapped to the edge, and the part which is mapped to a face near the singularity by the closest-point projection. In all cases, one ends up with an estimate analogous to \eqref{eq:bound-Rh-corner} or \eqref{eq:bound-Rh-nearC-4}, i.e., 
\begin{equation}\label{eq:bound-Rh-3D-1}
\Vert h^{-1/2}R_h w \Vert_{0, \tilde{\Gamma}_{h,U}} \leq K \, h_\Gamma^{\lambda(1+\zeta)} \;,
\end{equation}
where $\tilde{\Gamma}_{h,U}$ denotes the portion of $\tG$ near the vertex or the edge. Correspondingly, one obtains 
a result similar to that of Proposition \ref{prop:bound-Rh-C}, namely
\begin{equation}\label{eq:bound-Rh-3D-2}
\Vert h^{-1/2} R_h u \Vert_{0,\tilde{\Gamma}_{h,U}} \leq K_U(f,g)  \, h_{\Gamma}^{\min(3/2 - 1/p,\lambda_U) +\zeta'} \,,
\end{equation}
where $\lambda_U$ is the smallest exponent appearing in \eqref{eq:3d-singular}, or $\infty$ if the edge or vertex singularity is weak enough, and $\zeta' =\min(2-1/p,\lambda_U)\, \zeta$.

\section{Consistency and convergence analysis \label{sec:convergence} }

From now on, we pose the following regularity assumption on the solution of our  Dirichlet problem.
\begin{assumption}\label{ass:regularity}
The solution $u$ of Problem \eqref{eq:Dirichlet} belongs to $W^{1+s,p}(\Omega)$ for some $s,p$ satisfying the conditions stated in Definition \ref{def:Wh}.
\end{assumption}

A few comments on this assumption are in order. If the boundary $\G$  is $C^2$ and the data satisfy $f \in L^p(\Omega)$ and $g \in W^{2-1/p,p}(\G)$, then $u \in W^{2,p}(\Omega)$, i.e., the assumption is satisfied with $s=1$. On the other hand, if $\G$ contains a finite number of corners (in two dimensions) or edges or vertices (in three dimensions), then $u$ can be split into a regular part $u_\text{reg} \in W^{2,p}(\Omega)$ and a (possibly) singular part $u_\text{sing}$. The latter is a linear combination of terms of the form $u_i = r_i^{\lambda_i}\varphi_i$ (for $i$ ranging in some finite set ${\mathcal I}$), where the local polar coordinate $r_i$ is the distance from the singularity, $\lambda_i >0$, and $\varphi_i$ is a smooth function depending on the remaining polar/cylindrical coordinate(s). Let ${\mathcal I}_1\subseteq {\mathcal I}$ be the set of indices associated with a corner in two dimensions or an edge in three dimensions; let ${\mathcal I}_2 = {\mathcal I} \setminus {\mathcal I}_1$ be the set of indices associated with a vertex in three dimensions. Then, $u_i \in W^{t,q}(\Omega)$ for any $t,q$ satisfying $t < \lambda_i +2/q$ if $i \in {\mathcal I}_1$, or $t < \lambda_i +3/q$ if $i \in {\mathcal I}_2$. In particular, let us choose $q=2$ and let us set
$$
S_\star := \{ \eta < 1 : \eta=\lambda_i \text{ for some } i \in {\mathcal I}_1, \text{ or } \eta=\lambda_i + 1/2 \text{ for some } i \in {\mathcal I}_2 \}\,.
$$
If $S_\star$ is empty, then $u_\text{sing} \in H^2(\Omega)$; otherwise, $u_\text{sing} \in H^{1+s}(\Omega)$ for any $s < s_\star :=\min S_\star$.  In all cases, Assumption \ref{ass:regularity} is fulfilled.

\medskip
As a consequence of Assumption \ref{ass:regularity}, $u$ belongs to the related space $V(\tO;\tT,s,p)$ introduced in \eqref{eq:def-Wh1}, that we will simply indicate by $V(\tO;\tT)$ whenever no confusion may arise.

In order to estimate the discretization error $u-u_h$ in the `energy' norm, we invoke Strang's Second Lemma, which reads
\begin{equation}\label{eq:strang-1}
\Vert u - u_h \Vert_a \leq \left(1+\alpha^{-1}A \right) E_{a,h}(u) + \alpha^{-1} E_{c,h}(u) \,,
\end{equation}
where $E_{a,h}(u) $ is the approximation error
\begin{equation}\label{eq:strang-2}
E_{a,h}(u) := \inf_{v_h \in V_h} \Vert u - v_h \Vert_{V(\tO;\tT)} \,,
\end{equation}
whereas $E_{c,h}(u)$ is the consistency error
\begin{equation}\label{eq:strang-3}
E_{c,h}(u)  := \sup_{v_h \in V_h} \frac{a_h(u,v_h) - \ell_h(v_h)}{\Vert v_h \Vert_a}\,.
\end{equation}
We are going to estimate both errors in terms of the meshsize $h_\Omega$ introduced in \eqref{eq:mesh-param}.

\begin{prop}\label{prop:estim-approx-err}
There exists a constant $C_a>0$ independent of $u$ and the meshsize such that
\begin{equation}\label{eq:estim-approx-err}
E_{a,h}(u) \leq C_a h^s_\Omega \, |\nabla u |_{s,p,\tO}\,.
\end{equation}
\end{prop}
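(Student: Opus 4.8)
The plan is to realize the infimum in \eqref{eq:strang-2} with a concrete interpolant: I take $v_h = I_h u$, where $I_h : W^{1+s,p}(\tO) \to V_h$ is a Scott--Zhang-type quasi-interpolation operator that reproduces globally continuous piecewise-affine functions and enjoys the standard local estimates $\Vert u - I_h u \Vert_{L^p(T)} \lesssim h_T^{1+s}\,|\nabla u|_{s,p,\omega_T}$ and $\Vert \nabla(u - I_h u) \Vert_{L^p(T)} \lesssim h_T^{s}\,|\nabla u|_{s,p,\omega_T}$ on element patches $\omega_T$; the finite-overlap property of these patches, a consequence of shape regularity, lets me reassemble local contributions into global seminorms. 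Writing $e := u - I_h u$, I split $\Vert e \Vert_{V(\tO;\tT)}^2 = \Vert \nabla e \Vert_{0,\tO}^2 + \Vert h^{-1/2}\tS e \Vert_{0,\tG}^2 + |h^{s_p}\nabla e|_{s,p,\tO^b}^2$ and estimate the three pieces separately, each against $h_\Omega^s\,|\nabla u|_{s,p,\tO}$.

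For the interior gradient term I would pass from the $L^p$ interpolation bound to the $L^2$ norm element by element via $\Vert \nabla e \Vert_{0,T} \le |T|^{1/2-1/p}\Vert \nabla e \Vert_{L^p(T)} \lesssim h_T^{n(1/2-1/p)+s}|\nabla u|_{s,p,\omega_T}$ (using $p \ge 2$), then square, sum over $T$, and apply H\"older's inequality with exponents $p/2$ and $p/(p-2)$. The crucial cancellation is that the resulting element weights collapse to $\sum_T h_T^{n}\,h_T^{2sp/(p-2)} \le h_\Omega^{2sp/(p-2)}\sum_T h_T^n$, and since $\sum_T h_T^n \simeq |\tO| \le |\Omega| \simeq 1$, one recovers exactly $\Vert \nabla e \Vert_{0,\tO} \lesssim h_\Omega^{s}\,|\nabla u|_{s,p,\tO}$. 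The fractional-seminorm term is the easiest: because $\nabla(I_h u)$ is constant on each $T$, its Slobodeckij seminorm vanishes, so $|\nabla e|_{s,p,T} = |\nabla u|_{s,p,T}$ \emph{exactly}; since $s_p\,p = sp + (p/2 - 1) \ge sp$ and $h_T \le h_\Omega \le 1$, one has $h_T^{s_p p} \le h_\Omega^{sp}$, and this piece is bounded by $h_\Omega^s\,|\nabla u|_{s,p,\tO^b}$ with no interpolation estimate needed at all.

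It remains to treat the boundary term, for which I expand $\tS e = e + \nabla e \cdot \bs{d}$. The contribution $\Vert h^{-1/2}e \Vert_{0,\tG}$ is handled by the scaled trace inequality $\Vert e \Vert_{0,E}^2 \lesssim h_T^{-1}\Vert e \Vert_{0,T}^2 + h_T\Vert \nabla e \Vert_{0,T}^2$: multiplying by $h_T^{-1}$ and inserting $\Vert e \Vert_{0,T} \lesssim h_T^{n(1/2-1/p)+1+s}|\nabla u|_{s,p,\omega_T}$ produces on each boundary element precisely the same element weight $h_T^{n(1-2/p)+2s}$ as in the interior term, so the identical H\"older argument gives $\Vert h^{-1/2}e \Vert_{0,\tG} \lesssim h_\Omega^s\,|\nabla u|_{s,p,\tO}$. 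For the remaining contribution I invoke Assumption \ref{ass:d-smallness}: from $\Vert \bs{d} \Vert \le c_d\,h_T^{1+\zeta}$ I obtain $\Vert h^{-1/2}\nabla e \cdot \bs{d} \Vert_{0,\tG} \le c_d\,h_\Gamma^{\zeta}\,\Vert h^{1/2}\nabla e \Vert_{0,\tG}$, and then the extended scaled trace inequality \eqref{eq:scaled-5} bounds $\Vert h^{1/2}\nabla e \Vert_{0,\tG}$ by $\Vert \nabla e \Vert_{0,\tO} + |h^{s_p}\nabla e|_{s,p,\tO^b}$, both already controlled; the spare factor $h_\Gamma^{\zeta} \le 1$ is simply absorbed. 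Collecting the three estimates yields $\Vert e \Vert_{V(\tO;\tT)} \lesssim h_\Omega^s\,|\nabla u|_{s,p,\tO}$, which is the claim since $E_{a,h}(u) \le \Vert e \Vert_{V(\tO;\tT)}$.

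I expect the main obstacle to be the $L^2$-against-$L^p$ bookkeeping in the interior and first boundary terms: one must verify that the element-local scaling factor $h_T^{n(1/2-1/p)}$ generated by the $L^2 \hookrightarrow L^p$ passage is exactly compensated, after H\"older over the elements, by the total volume $\sum_T h_T^n \simeq 1$, so that only the clean power $h_\Omega^s$ survives. This is where the normalization $\mathrm{diam}(\Omega) \simeq 1$ and the hypothesis $p \ge 2$ are both essential; everywhere else only shape regularity, the patch finite-overlap property, and $h_\Omega \le 1$ are used.
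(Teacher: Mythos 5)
Your proposal is correct and follows essentially the same route as the paper: take a piecewise-affine interpolant of $u$ and bound the three components of the $V(\tO;\tT)$-norm separately, exploiting the exactness $|\nabla(u-I_hu)|_{s,p,T}=|\nabla u|_{s,p,T}$ for the fractional term, a scaled trace inequality for $\Vert h^{-1/2}(u-I_hu)\Vert_{0,\tG}$, and Assumption \ref{ass:d-smallness} combined with \eqref{eq:scaled-5} for the $\nabla e\cdot\bs{d}$ contribution. The only differences are cosmetic: the paper uses the nodal Lagrange interpolant (well-defined since $s>\tfrac1p$ gives $W^{1+s,p}(\Omega)\subset {\mathcal C}^0(\bar\Omega)$) with a reference-element scaling argument, whereas you use a Scott--Zhang quasi-interpolant and make explicit the $L^p$-to-$L^2$ H\"older bookkeeping over elements that the paper compresses into its scaling step \eqref{eq:H1error-bound}.
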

\begin{proof}
Thanks to the assumptions on $s,p$  in Definition \ref{def:Wh}, $W^{1+s,p}(\Omega) \subset {\mathcal C}^0(\bar{\Omega})$ with continuous injection. Hence, the piecewise linear interpolant $I_h u$ at the nodes of $\tT$ is well-defined in $V_h$. So, we will estimate $\Vert u - I_h u \Vert_{V(\tO;\tT)}$. To this end, we observe that on the reference element $\hat{T}$ there exists a constant $\hat{c}>0$ such that 
$$
\Vert \, \hat{u} - \hat{I}\hat{u}\, \Vert_{1,\hat{T}} \leq \hat{c} \, |\hat{\nabla} \hat{u} |_{s,p,\hat{T}} \qquad \forall \hat{u} \in W^{1+s,p}(\hat{T})\,,
$$
where $\hat{I}$ denotes linear interpolation at the vertices of $\hat{T}$. Using the standard transformations from an element $T \in \tT$ to the reference element and back, one gets
\begin{equation}\label{eq:H1error-bound}
 \Vert h^{-1} (u-I_h u) \Vert_{0,\tO} + \Vert \nabla(u-I_h u) \Vert_{0,\tO} \leq c \, h^s_\Omega \, |\nabla u |_{s,p,\tO}\,.
\end{equation}
Here and in the sequel, $c$ denotes a positive constant independent of the meshsize, which may be different in different formulas. On the other hand,
$$
\Vert h^{-1/2} \tS (u-I_h u) \Vert_{0,\tG} \leq \Vert h^{-1/2} (u-I_h u) \Vert_{0,\tG} + \Vert h^{-1/2} \Vert \bs{d} \Vert \nabla(u-I_h u) \Vert_{0,\tG}\,. 
$$
Using \eqref{eq:scaled-3} with $p=2$ and $s=\sigma=1$ for bounding the first norm on the right-hand side, and \eqref{eq:d-smallness} with \eqref{eq:scaled-5} for bounding  the second norm, one gets
\begin{eqnarray*}
\Vert h^{-1/2} \tS (u-I_h u) \Vert_{0,\tG} &\leq& c \left(\Vert h^{-1} (u-I_h u) \Vert_{0,\tO^b} + (1+h^\zeta_\Gamma)\Vert \nabla(u-I_h u) \Vert_{0,\tO^b} \right. \\
&& \hskip 3.6cm   \left. + \ h^{s_p+\zeta}_\Gamma \, |\nabla (u-I_hu) |_{s,p,\tO^b} \right)\,.
\end{eqnarray*}
We conclude by \eqref{eq:H1error-bound}, after observing that $s_p \geq s$ and $|\nabla (u-I_hu) |_{s,p,\tO} = |\nabla u |_{s,p,\tO}$.  \qquad \end{proof}

Next, we estimate the consistency error $E_{c,h}(u) $. This will be accomplished in two steps.

\begin{lemma}\label{lem:consist-1}
There exists a constant $C_{c,1}>0$ independent of $u$ and the meshsize such that
\begin{equation}\label{eq:est-consist-1}
E_{c,h}(u)  \leq C_{c,1} \Vert h^{-1/2} R_h u \Vert_{0,\tG} \,.
\end{equation}

\end{lemma}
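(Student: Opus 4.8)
The plan is to compute the residual $a_h(u,v_h)-\ell_h(v_h)$ explicitly and to exploit the fact that $u$ solves the Poisson equation pointwise in order to eliminate the volume term and most of the boundary terms, collapsing the residual onto a single expression that involves only the Taylor remainder $R_h u$ on $\tG$. Once this is done, the estimate reduces to two Cauchy--Schwarz bounds weighted by suitable powers of $h$.

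First I would integrate by parts the term $(\nabla u, \nabla v_h)_{0,\tO}$. Since $-\Delta u = f$ holds in $\Omega$, hence in $\tO \subseteq \Omega$, Green's formula yields
$$(\nabla u, \nabla v_h)_{0,\tO} = (f,v_h)_{0,\tO} + (\partial_\tns u, v_h)_{0,\tG}\,.$$
Substituting this into $a_h(u,v_h)$ and subtracting $\ell_h(v_h)$ from the definitions \eqref{eq:SBM-a-form}--\eqref{eq:SBM-l-form}, the terms $(f,v_h)_{0,\tO}$ and $(\partial_\tns u, v_h)_{0,\tG}$ cancel, leaving
$$a_h(u,v_h)-\ell_h(v_h) = (\bar{g} - \tS u, \partial_\tns v_h)_{0,\tG} + \gamma\,(h^{-1}(\tS u - \bar{g}), \tS v_h)_{0,\tG}\,.$$
At this point I would invoke the identity \eqref{eq:u-g}, namely $\bar{g} - \tS u = R_h u$, to rewrite the residual as
$$a_h(u,v_h)-\ell_h(v_h) = (R_h u, \partial_\tns v_h)_{0,\tG} - \gamma\,(h^{-1} R_h u, \tS v_h)_{0,\tG}\,.$$

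The remaining task is a pair of weighted Cauchy--Schwarz estimates, inserting the factors $h^{\pm 1/2}$ so that $\Vert h^{-1/2} R_h u \Vert_{0,\tG}$ appears. For the first term I would write $(R_h u, \partial_\tns v_h)_{0,\tG} = (h^{-1/2} R_h u, h^{1/2}\partial_\tns v_h)_{0,\tG}$ and bound $\Vert h^{1/2}\partial_\tns v_h \Vert_{0,\tG} \leq \Vert h^{1/2}\nabla v_h \Vert_{0,\tG} \leq C_I^{1/2}\Vert \nabla v_h\Vert_{0,\tO} \leq C_I^{1/2}\Vert v_h\Vert_a$, using $|\partial_\tns v_h| \leq |\nabla v_h|$ together with the scaled trace inequality \eqref{eq:scaled-2}. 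For the second term I would split the weight symmetrically as $(h^{-1/2} R_h u, h^{-1/2}\tS v_h)_{0,\tG}$ and observe that $\Vert h^{-1/2}\tS v_h\Vert_{0,\tG} \leq \Vert v_h\Vert_a$ directly from the definition \eqref{eq:coercivity-2} of the energy norm. Collecting the two contributions gives $|a_h(u,v_h)-\ell_h(v_h)| \leq (C_I^{1/2}+\gamma)\,\Vert h^{-1/2} R_h u\Vert_{0,\tG}\,\Vert v_h\Vert_a$; dividing by $\Vert v_h\Vert_a$ and taking the supremum over $v_h \in V_h$ yields the claim with $C_{c,1}=C_I^{1/2}+\gamma$.

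The only delicate point -- more a matter of care than of genuine difficulty -- is justifying the integration by parts and the existence in $L^2(\tG)$ of the trace $\partial_\tns u$. Under Assumption \ref{ass:regularity} the solution $u$ belongs only to $V(\tO;\tT)$ and may be singular at corners or edges of $\Gamma$; however, those singularities sit on $\Gamma$, whereas $\tG$ lies strictly inside $\Omega$, so the gradient trace on $\tG$ is controlled in $L^2$ through \eqref{eq:scaled-5}, and Green's formula then holds by a density argument. Everything else is a routine application of Cauchy--Schwarz and the already-established scaled trace inequalities.
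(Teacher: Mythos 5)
Your proof is correct and follows essentially the same route as the paper's: integrate by parts using $-\Delta u = f$ in $\tO$ to collapse the residual onto the boundary terms $(\bar g - \tS u,\partial_\tns v_h)_{0,\tG}$ and $\gamma(h^{-1}(\tS u - \bar g),\tS v_h)_{0,\tG}$, identify $\bar g - \tS u = R_h u$ via \eqref{eq:u-g}, and conclude by weighted Cauchy--Schwarz with the scaled trace inequality \eqref{eq:scaled-2}, obtaining the same constant $C_{c,1}=C_I^{1/2}+\gamma$. Your closing remark on justifying the gradient trace of $u$ on $\tG$ (via \eqref{eq:scaled-5} and density, since $u\in V(\tO;\tT)$) is a point the paper leaves implicit, but it is consistent with its framework and is a welcome addition rather than a deviation.
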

\begin{proof}
By integration by parts and application of \eqref{eq:scaled-2}, one gets for all $v_h \in V_h$
\begin{eqnarray*}
&& \hskip -0.8cm \left | a_h(u,v_h) -\ell_h(v_h) \right| \leq \left | (\tS u - \bar{g}, \partial_\tns v_h)_{0,\tG} \right| + \gamma \, \left|(h^{-1} (\tS u-\bar{g}), \tS v_h)_{0,\tG} \right| \\
&& \hskip 2.23cm \leq  \Vert h^{-1/2} (\tS u-\bar{g} ) \Vert_{0,\tG} \!\! \left( C_I^{1/2} \Vert \nabla v_h \Vert_{0,\tO} \!\! +  \gamma \, \Vert h^{-1/2} \tS v_h\Vert_{0,\tG} \! \right),
\end{eqnarray*}
whence the result. \end{proof}

The analysis of the Taylor remainder carried out in Sect. \ref{sec:remainder} provides an estimate for the scaled norm of $R_h u$ on $\tG$. To get the result, from now on we assume that $\Gamma$ is $C^2$, except possibly for a finite number of singularities (corners in two dimensions, edges and vertices in three dimensions).
Obviously, if $\Gamma$ contains singularities, only those facing the region where $\Omega$ differs from $\tO$ affect the result. To be precise, with reference to the discussion following Assumption \ref{ass:regularity}, let us define ${\mathcal I}_S \subset {\mathcal I}$ as the set of all indices associated with those singularities  that belong to  $\G \setminus \tG$ or to its closure, and let us set
\begin{equation}\label{eq:min-lambda}
\lambda_S := \min \{ \lambda_i : i \in {\mathcal I}_S  \text{ and } \lambda_i < 1 \}\,.
\end{equation}
We suppose that the assumptions made in Proposition \ref{prop:bound-Rh-S} for all the smooth parts of $\Gamma$, and in Proposition \ref{prop:bound-Rh-C} for the part of $\Gamma$ around any corner (or in their three-dimensional counterparts) hold true. Then, putting together the local estimates of the norm of $R_h u$ as in \eqref{eq:bound-Rh-nearC-6} or \eqref{eq:bound-Rh-3D-2}, we obtain the following result.

\begin{lemma}\label{lem:consist-2}  
Setting
\begin{equation}\label{eq:def-exponent-sigma}
\sigma := \min(3/2-1/p,\lambda_S)+\min(2-1/p,\lambda_S)\zeta \;,
\end{equation}
there exists a constant $C_{c,2}>0$ independent of $u$ and the meshsize such that
\begin{equation}\label{eq:est-consist-2}
\Vert h^{-1/2}R_h u \Vert_{0,\tG} \leq C_{c,2} h_\Gamma^\sigma \, K(u,f,g) \;,
\end{equation}
where $K(u,f,g)$ is a sum of norms of $u$ and the data $f$ and $g$.
\end{lemma}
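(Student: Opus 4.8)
The plan is to assemble the global estimate on $\Vert h^{-1/2} R_h u \Vert_{0,\tG}$ by decomposing the surrogate boundary into pieces according to which part of $\Gamma$ they are mapped to under $\bs{M}_h$, and then applying the already-established local bounds on each piece. The starting point is the splitting
\begin{equation*}
\Vert h^{-1/2} R_h u \Vert_{0,\tG}^2 = \sum_{\text{pieces}} \Vert h^{-1/2} R_h u \Vert_{0,\text{piece}}^2 \,,
\end{equation*}
where the pieces are $\tilde{\Gamma}_{h,S}$ (mapped into a smooth portion $\G_S$ of $\Gamma$), together with one local patch $\tilde{\Gamma}_{h,C}$ (in two dimensions) or $\tilde{\Gamma}_{h,U}$ (in three dimensions) around each singularity indexed by ${\mathcal I}_S$. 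Since there are only finitely many smooth portions and finitely many singularities, this is a finite sum, and a global bound follows by taking the worst exponent among all the local ones.

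\textbf{Step one: apply the local estimates.} On the smooth part, Proposition \ref{prop:bound-Rh-S} with the exponent choice $p$ gives a contribution of order $h_\Gamma^{3/2-1/p+\zeta'}$ with $\zeta'=(2-1/p)\zeta$. On each singular patch, Proposition \ref{prop:bound-Rh-C} (or its three-dimensional analogue \eqref{eq:bound-Rh-3D-2}) gives a contribution of order $h_\Gamma^{\min(3/2-1/p,\lambda_i)+\min(2-1/p,\lambda_i)\zeta}$, where $\lambda_i$ is the singularity exponent. \textbf{Step two: identify the dominant exponent.} Because $h_\Gamma$ is small, the sum is controlled (up to constants and the finite number of terms) by the term with the smallest exponent. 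Observing that the map $\lambda \mapsto \min(3/2-1/p,\lambda)+\min(2-1/p,\lambda)\zeta$ is nondecreasing in $\lambda$, the minimal exponent over all contributions is attained at the smallest relevant singularity exponent, namely $\lambda_S$ defined in \eqref{eq:min-lambda} (only singularities with $\lambda_i<1$ facing $\G\setminus\tG$ can produce an exponent below the smooth-boundary value). This yields precisely
\begin{equation*}
\sigma = \min(3/2-1/p,\lambda_S)+\min(2-1/p,\lambda_S)\zeta \,,
\end{equation*}
with the convention $\lambda_S = +\infty$ (so $\sigma = 3/2-1/p+\zeta'$) when no such singularity is present.

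\textbf{The main obstacle} is bookkeeping rather than any deep analytic difficulty: one must ensure that the finitely many overlapping neighborhoods $\Omega_S$ and $\Omega_C$ cover a full strip between $\tG$ and $\Gamma$, that each surrogate edge/face is assigned to exactly one regime, and that the constants $C_p$, $K_C$, $K_U$ combine into a single constant $C_{c,2}$ times a finite sum $K(u,f,g)$ of the relevant Sobolev norms of $u$ and data norms of $f,g$ on these patches. A subtle point is that the smooth-part estimate requires $u\in W^{2,p}(\Omega_S)$ whereas near a singularity $u$ is merely in the corresponding weighted/fractional space; this is handled exactly by the regular/singular splitting $u=u_\text{reg}+u_\text{sing}$ already introduced, applying the smooth bound to $u_\text{reg}$ and the singular bounds to each $u_\text{sing}$ term, so no additional regularity beyond Assumption \ref{ass:regularity} is needed. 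Collecting the finitely many terms and taking $h_\Gamma$ small enough to absorb lower-order factors then gives \eqref{eq:est-consist-2}.
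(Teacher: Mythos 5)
Your proof is correct and follows exactly the paper's route: the paper itself gives no separate argument for Lemma \ref{lem:consist-2} beyond the sentence ``putting together the local estimates of the norm of $R_h u$ as in \eqref{eq:bound-Rh-nearC-6} or \eqref{eq:bound-Rh-3D-2}, we obtain the following result,'' and your write-up is precisely that assembly --- decomposing $\tG$ into the pieces covered by Proposition \ref{prop:bound-Rh-S} and Proposition \ref{prop:bound-Rh-C} (or \eqref{eq:bound-Rh-3D-2}), using the regular/singular splitting $u=u_{\text{reg}}+u_{\text{sing}}$ near singularities, and taking the smallest exponent via the monotonicity of $\lambda \mapsto \min(3/2-1/p,\lambda)+\min(2-1/p,\lambda)\zeta$, which yields $\sigma$ as in \eqref{eq:def-exponent-sigma} with the paper's convention $\lambda_S=\infty$ when $\mathcal{I}_S$ contributes no $\lambda_i<1$.
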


Concatenating \eqref{eq:est-consist-1} and \eqref{eq:est-consist-2}, we arrive at the following estimate of the consistency error.
\begin{prop}\label{prop:estim-consist-err}
Under the previous assumptions, there exists a constant $C_c>0$ independent of $u$ and the meshsize such that
\begin{equation}\label{eq:estim-consist-err}
E_{c,h}(u) \leq C_c \, h_\Gamma^\sigma  \, K(u,f,g)   \,.
\end{equation}
\end{prop}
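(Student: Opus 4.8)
The plan is to obtain the estimate by simple concatenation of the two preceding results, Lemma \ref{lem:consist-1} and Lemma \ref{lem:consist-2}, which between them already isolate the consistency error, express it through the scaled Taylor remainder, and bound that remainder by the desired power of $h_\Gamma$. All the genuine work has been carried out upstream, so the proof here is essentially a one-line chaining argument.

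Concretely, I would first invoke Lemma \ref{lem:consist-1}, which controls the consistency error solely through the scaled $L^2$-norm of the Taylor remainder on the whole surrogate boundary:
$$
E_{c,h}(u) \leq C_{c,1}\, \Vert h^{-1/2} R_h u \Vert_{0,\tG}\,.
$$
This step rests on integration by parts together with the scaled trace inequality \eqref{eq:scaled-2}, applied to the two boundary terms $(\tS u - \bar{g}, \partial_\tns v_h)_{0,\tG}$ and $\gamma\,(h^{-1}(\tS u - \bar g), \tS v_h)_{0,\tG}$ that survive in $a_h(u,v_h) - \ell_h(v_h)$ after recalling \eqref{eq:u-g} (so that $\tS u - \bar g = -R_h u$ on $\tG$), and is already established.

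Second, I would apply Lemma \ref{lem:consist-2}, which collects the local remainder bounds from Section \ref{sec:remainder} — namely Proposition \ref{prop:bound-Rh-S} on the smooth portions and Proposition \ref{prop:bound-Rh-C} (and its three-dimensional analogue \eqref{eq:bound-Rh-3D-2}) around the singularities — into the single global estimate
$$
\Vert h^{-1/2} R_h u \Vert_{0,\tG} \leq C_{c,2}\, h_\Gamma^{\sigma}\, K(u,f,g)\,,
$$
with $\sigma$ defined by \eqref{eq:def-exponent-sigma}. Combining the two displays yields the claim with $C_c := C_{c,1}\,C_{c,2}$, since both constants are independent of $u$ and the meshsize, and $K(u,f,g)$ is the indicated sum of norms of $u$, $f$ and $g$.

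There is no real obstacle at this stage: the main analytical difficulty — estimating the decay rate of the Taylor remainder, especially near corners and edges where $u$ loses elliptic regularity — has already been resolved in Section \ref{sec:remainder}, and the passage from the remainder to the abstract consistency functional \eqref{eq:strang-3} is purely a matter of the trace inequalities. The only point worth double-checking is bookkeeping: that the exponent $\sigma$ produced by taking the minimum over all relevant smooth and singular contributions coincides with \eqref{eq:def-exponent-sigma}, i.e.\ that the worst (smallest) exponent $\min(3/2-1/p,\lambda_S) + \min(2-1/p,\lambda_S)\zeta$ correctly governs the global bound once the finitely many local estimates are summed.
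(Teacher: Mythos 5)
Your proposal is correct and is exactly the paper's argument: the paper proves Proposition \ref{prop:estim-consist-err} by directly concatenating \eqref{eq:est-consist-1} from Lemma \ref{lem:consist-1} with \eqref{eq:est-consist-2} from Lemma \ref{lem:consist-2}, yielding the bound with $C_c = C_{c,1}C_{c,2}$. Your additional remarks on where the analytical work lives (Section \ref{sec:remainder}) and on the bookkeeping of the exponent $\sigma$ are accurate but not part of the proof itself.
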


Summarizing, we obtain an estimate of the discretization error $u-u_h$ in the `energy' norm by using \eqref{eq:estim-approx-err} and \eqref{eq:estim-consist-err}  in \eqref{eq:strang-1}, together with the inequality $h_\Gamma \leq h_\Omega$. 

\begin{thm}\label{theo:estim-discr-err-a} 
Under the assumptions made in this section, setting
\begin{equation}\label{eq:def-exponent-r}
r := \min \left( s , \sigma \right) \,,
\end{equation}
there exists a constant $C>0$ independent of $u$ and the meshsize such that
\begin{equation}\label{eq:estim-discr-err-a}
\Vert u - u_h \Vert_a \leq C \, h_\Omega^r \, ( |\nabla u |_{s,p,\Omega} + K(u,f,g) )\,.
\end{equation}
\end{thm}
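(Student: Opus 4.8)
The plan is to read off the statement directly from Strang's Second Lemma \eqref{eq:strang-1}, into which I would substitute the two separate bounds already established: the approximation estimate of Proposition \ref{prop:estim-approx-err} and the consistency estimate of Proposition \ref{prop:estim-consist-err}. Since the genuine analytical effort---coercivity (Proposition \ref{prop:coercvity-a}), continuity (Proposition \ref{prop:continuity-a}), the interpolation bound, and above all the delicate Taylor-remainder analysis of Section \ref{sec:remainder}---has been carried out beforehand, the remaining task is essentially a bookkeeping exercise in collecting the powers of the meshsize and absorbing mesh-independent constants.

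Concretely, I would first note that the multiplicative constants $1+\alpha^{-1}A$ and $\alpha^{-1}$ appearing in \eqref{eq:strang-1} are independent of $h$, because $\alpha$ (from \eqref{eq:coercivity-1}) and $A$ (from \eqref{eq:continuity-a}) are. Next I would insert \eqref{eq:estim-approx-err} to bound the approximation term by $C_a\,h_\Omega^s\,|\nabla u|_{s,p,\tO}$, and \eqref{eq:estim-consist-err} to bound the consistency term by $C_c\,h_\Gamma^\sigma\,K(u,f,g)$. Because $\tO \subseteq \Omega$, I may replace the seminorm on $\tO$ by the one on $\Omega$, using $|\nabla u|_{s,p,\tO}\le |\nabla u|_{s,p,\Omega}$.

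It then remains to reconcile the two distinct mesh parameters and the two exponents. Recalling from \eqref{eq:mesh-param} that $h_\Gamma\le h_\Omega$, and assuming without loss of generality that $h_\Omega\le 1$, both monomials are dominated by the common power $h_\Omega^{r}$ with $r=\min(s,\sigma)$ as defined in \eqref{eq:def-exponent-r}; indeed $h_\Omega^s\le h_\Omega^r$ and $h_\Gamma^\sigma\le h_\Omega^\sigma\le h_\Omega^r$. Factoring out $h_\Omega^{r}$ and setting $C:=\max\bigl((1+\alpha^{-1}A)C_a,\ \alpha^{-1}C_c\bigr)$ would then yield \eqref{eq:estim-discr-err-a}.

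I do not expect any serious obstacle here: the only points demanding care are verifying that every constant entering the chain is genuinely mesh-independent (which follows from the cited propositions) and handling the mismatch between $h_\Gamma$ and $h_\Omega$ and between the exponents $s$ and $\sigma$. All the real difficulty of the convergence theory is concentrated upstream, in the estimate of $\Vert h^{-1/2} R_h u \Vert_{0,\tG}$ near corners and edges that feeds into $\sigma$ via \eqref{eq:def-exponent-sigma}; once that is in hand, the energy estimate is immediate.
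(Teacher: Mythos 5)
Your proposal is correct and coincides with the paper's own argument: the paper likewise obtains Theorem~\ref{theo:estim-discr-err-a} by inserting the approximation bound \eqref{eq:estim-approx-err} and the consistency bound \eqref{eq:estim-consist-err} into Strang's Second Lemma \eqref{eq:strang-1} and invoking $h_\Gamma \leq h_\Omega$ to unify the exponents into $r=\min(s,\sigma)$. Your extra care about the mesh-independence of $\alpha$ and $A$, the inequality $|\nabla u|_{s,p,\tO}\leq |\nabla u|_{s,p,\Omega}$, and the normalization $h_\Omega\leq 1$ is exactly the bookkeeping the paper leaves implicit.
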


\begin{rem}\label{rem:optimality} 
{\rm
This estimate is optimal. Indeed, if data are sufficiently smooth and there is no boundary singularity (which corresponds to setting $\lambda_S=\infty$) in \eqref{eq:def-exponent-sigma}, then $s=1$ and $\sigma \geq 1$ (since $p \geq 2$), whence $r=1$.  On the other hand, if a boundary singularity exists (thus, $\lambda_S <1$), then $s$ is any real number $<\lambda_S$ whereas $\sigma =\lambda_S(1+\zeta)$, whence $r$ is any real number $<\lambda_S$.
}
\end{rem}

\medskip
Finally, observing that  $| h^{s_p} \, \nabla (u-u_h) |_{s,p, \tO^b} = | h^{s_p} \, \nabla u |_{s,p, \tO^b} \leq h_\Omega^s \, | \nabla u |_{s,p, \Omega}$, we immediately obtain a control of the error in the norm of $V(\tO;\tT)$, too.
\begin{corollary}\label{prop:estim-discr-err-V}   
There exists a constant $C>0$ independent of $u$ and the meshsize such that
\begin{equation}\label{eq:estim-discr-err-V}
\Vert u - u_h \Vert_{V(\tO;\tT)} \leq C \, h_\Omega^r \, ( |\nabla u |_{s,p,\Omega} + K(u,f,g) )\,.
\end{equation}
\end{corollary}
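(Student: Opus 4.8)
The plan is to unfold the $V(\tO;\tT)$-norm of the error according to its definition \eqref{eq:def-Wh3},
\[
\Vert u - u_h \Vert_{V(\tO;\tT)}^2 = \Vert u - u_h \Vert_a^2 + | h^{s_p} \, \nabla (u-u_h) |_{s,p, \tO^b}^2 \,,
\]
and to bound the two contributions separately. The first summand is already controlled by Theorem \ref{theo:estim-discr-err-a}, so no further work is needed there; the whole task reduces to estimating the fractional seminorm of $\nabla(u-u_h)$ on the boundary strip $\tO^b$.

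The key observation, anticipated in the sentence preceding the statement, is that the discrete part contributes nothing to this seminorm. Indeed $u_h \in V_h$ is continuous and piecewise affine, so $\nabla u_h$ is piecewise constant on each $T \in \tT^b$: when $s<1$ the integrand $|\nabla u_h(\bs{x})-\nabla u_h(\bs{y})|^p$ appearing in $|\nabla u_h|_{s,p,T}$ vanishes identically, while when $s=1$ the seminorm is the $L^p$-norm of the Hessian of $u_h$, again zero. Hence $|\nabla u_h|_{s,p,T}=0$ for every $T \in \tT^b$, and therefore $| h^{s_p} \, \nabla (u-u_h) |_{s,p, \tO^b} = | h^{s_p} \, \nabla u |_{s,p, \tO^b}$.

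It then remains to bound the purely continuous quantity $| h^{s_p} \, \nabla u |_{s,p, \tO^b}$. Recalling \eqref{eq:def-Wh2}, I would factor the largest weight out of the sum, using $h_T \leq h_\Omega \leq 1$ together with $s_p = s+\tfrac12-\tfrac1p \geq s$ (valid since $p\geq 2$) to pass from the exponent $s_p$ to the smaller $s$. The local seminorms are then summed: since the fractional double integrals (or, for $s=1$, the Hessian integrals) over the mutually disjoint elements $T \in \tT^b$ together cover a subset of $\Omega$, one has $\sum_{T \in \tT^b} |\nabla u|_{s,p,T}^p \leq |\nabla u|_{s,p,\Omega}^p$, the right-hand side being finite because $u \in W^{1+s,p}(\Omega)$ by Assumption \ref{ass:regularity}. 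This yields $| h^{s_p} \, \nabla u |_{s,p, \tO^b} \leq h_\Omega^s \, |\nabla u|_{s,p,\Omega} \leq h_\Omega^r \, |\nabla u|_{s,p,\Omega}$, the last step using $r=\min(s,\sigma)\leq s$ and $h_\Omega\leq 1$.

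Combining the two bounds through the elementary inequality $\sqrt{a^2+b^2}\leq a+b$ for $a,b\geq 0$ delivers the claim with a possibly enlarged constant $C$. There is no genuine obstacle: the estimate is ``immediate'' precisely because the additional seminorm term sees only the exact solution $u$, the discrete function dropping out by piecewise linearity. The only point deserving a line of care is the sub-additivity of the fractional Sobolev seminorm over the element partition when $s<1$, which holds because the diagonal blocks $T\times T$ are mutually disjoint subsets of $\Omega\times\Omega$.
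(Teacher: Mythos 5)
Your proposal is correct and follows essentially the same route as the paper, which likewise reduces the claim to the observation that $| h^{s_p} \, \nabla (u-u_h) |_{s,p, \tO^b} = | h^{s_p} \, \nabla u |_{s,p, \tO^b} \leq h_\Omega^s \, | \nabla u |_{s,p, \Omega}$ and then invokes Theorem \ref{theo:estim-discr-err-a}. Your write-up merely fills in the details the paper leaves implicit (vanishing of the discrete seminorm by piecewise affineness, $s_p \geq s$ with $h \leq 1$, and sub-additivity of the fractional seminorm over disjoint elements), all of which are handled correctly.
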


\section{Enhanced error estimate in $L^2$ \label{sec:duality} }

In this section, we provide an estimate of the $L^2$-norm of the discretization error $u-u_h$, by adapting to the present setting the classical Aubin-Nitsche duality argument. To this end, we need two technical results. The first one is a measure of the non-symmetry of the bilinear form $a_h$.

\begin{lemma}\label{lemma:non-symmetry}
It holds $\forall v,w \in V(\tO;\tT)$
\begin{equation}\label{eq:non-symmetry}
a_h(w,v)-a_h(v,w) = (\partial_\tns w, \nabla v \cdot \bs{d})_{0,\tG} - (\partial_\tns v, \nabla w \cdot \bs{d})_{0,\tG} \,.
\end{equation}
\end{lemma}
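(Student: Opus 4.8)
The plan is to compute the difference $a_h(w,v)-a_h(v,w)$ directly from the definition \eqref{eq:SBM-a-form}, relying only on the symmetry of the $L^2$ inner products involved. First I would observe that two of the four terms defining $a_h$ are manifestly symmetric under the exchange of $w$ and $v$: the volume term $(\nabla w, \nabla v)_{0,\tO}$ and the penalization term $\gamma\,(h^{-1}\tS w, \tS v)_{0,\tG}$. These therefore cancel identically in the difference, so that only the two middle boundary terms survive, giving
\begin{equation*}
a_h(w,v)-a_h(v,w) = -(\partial_\tns w, v)_{0,\tG} - (\tS w, \partial_\tns v)_{0,\tG} + (\partial_\tns v, w)_{0,\tG} + (\tS v, \partial_\tns w)_{0,\tG}\,.
\end{equation*}

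The second step is to expand the shift operator via its definition \eqref{eq:def-bndS}, writing $\tS w = w + \nabla w \cdot \bs{d}$ and $\tS v = v + \nabla v \cdot \bs{d}$, and to substitute these into the two terms containing $\tS$. This splits each of those terms into a pure-trace contribution and a contribution carrying the distance vector $\bs{d}$. At this point I would use the symmetry of the boundary $L^2$ inner product: the pure-trace pieces $-(\partial_\tns w, v)_{0,\tG}$ and $+(v,\partial_\tns w)_{0,\tG}$ coincide and cancel, and likewise $-(w,\partial_\tns v)_{0,\tG}$ cancels against $+(\partial_\tns v, w)_{0,\tG}$. What remains is precisely the two $\bs{d}$-weighted terms $-(\nabla w \cdot \bs{d}, \partial_\tns v)_{0,\tG} + (\nabla v \cdot \bs{d}, \partial_\tns w)_{0,\tG}$, which, after reordering the arguments of each inner product, is exactly the right-hand side of \eqref{eq:non-symmetry}.

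There is no genuine analytic obstacle here: the statement is a purely algebraic identity and the proof is a matter of careful bookkeeping of signs and of correctly pairing the symmetric inner products that cancel. The only point requiring attention is that the identity holds for all $v,w \in V(\tO;\tT)$ rather than merely for finite-element functions; this is already guaranteed by Proposition \ref{prop:continuity-a}, which ensures that $a_h$ is well defined on $V(\tO;\tT)\times V(\tO;\tT)$, so that every inner product appearing above — in particular the traces of the gradients on $\tG$ — is finite and the rearrangements are legitimate.
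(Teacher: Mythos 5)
Your proof is correct and is essentially the paper's own argument: the paper merely rewrites $v=\tS v-\nabla v\cdot\bs{d}$ in the term $-(\partial_\tns w, v)_{0,\tG}$ \emph{before} subtracting the swapped expression, whereas you expand $\tS$ \emph{after} subtracting — the same algebra in a different order, with the same cancellations of the symmetric volume and penalty terms and of the pure-trace pairs. Your closing remark that well-definedness on $V(\tO;\tT)\times V(\tO;\tT)$ is covered by Proposition \ref{prop:continuity-a} is also the right justification, which the paper leaves implicit.
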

\begin{proof}
By \eqref{eq:SBM-a-form}  one has
\begin{equation*}
\begin{split}
&a_h(w,v) = (\nabla w, \nabla v)_{0,\tO} - (\partial_\tns w, \tS v)_{0,\tG} + (\partial_\tns w, \nabla v \cdot \bs{d})_{0,\tG}  \\
& \hskip 3.8cm  -  ( \tS w, \partial_\tns v)_{0,\tG} + \gamma \, (h^{-1} \tS w, \tS v)_{0,\tG} \,.
\end{split}
\end{equation*}
Subtracting from this expression the analogous one in which $w$ and $v$ are exchanged, we obtain the result.
\end{proof}

The second technical result replaces the Galerkin orthogonality property of standard interior discretizations.

\begin{lemma}\label{lemma:non-Gal-orth}
It holds
\begin{equation}\label{eq:non-Gal-Orth}
a_h(u-u_h, v_h) = (R_h u, \partial_\tns v_h)_{0,\tG} - \gamma (h^{-1}R_h u, \tS v_h)_{0,\tG}\,, \qquad \forall v_h \in V_h \,.
\end{equation}
\end{lemma}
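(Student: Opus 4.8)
The plan is to reduce the identity to an elementary boundary computation by exploiting the Galerkin identity and a single integration by parts. Since $u_h \in V_h$ solves the discrete problem \eqref{eq:SBM-formulation-2}, we have $a_h(u_h,v_h)=\ell_h(v_h)$ for every $v_h\in V_h$, so that
\[
a_h(u-u_h,v_h) = a_h(u,v_h)-\ell_h(v_h)\,,
\]
and it suffices to evaluate the right-hand side. First I would expand $a_h(u,v_h)$ by its definition \eqref{eq:SBM-a-form} and treat the volume term $(\nabla u,\nabla v_h)_{0,\tO}$ via Green's formula. Since $u$ solves $-\Delta u = f$ in $\Omega$ (hence in $\tO\subseteq\Omega$) and $v_h\in V_h\subset H^1(\tO)$, integration by parts on $\tO$ with outward normal $\tn$ yields
\[
(\nabla u,\nabla v_h)_{0,\tO} = (f,v_h)_{0,\tO} + (\partial_\tns u,v_h)_{0,\tG}\,.
\]

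Substituting this expression into $a_h(u,v_h)$, the boundary term $(\partial_\tns u,v_h)_{0,\tG}$ produced by Green's formula cancels against the Nitsche term $-(\partial_\tns u,v_h)_{0,\tG}$ appearing in \eqref{eq:SBM-a-form}, leaving
\[
a_h(u,v_h) = (f,v_h)_{0,\tO} - (\tS u,\partial_\tns v_h)_{0,\tG} + \gamma\,(h^{-1}\tS u,\tS v_h)_{0,\tG}\,.
\]
Subtracting $\ell_h(v_h)$ as given in \eqref{eq:SBM-l-form}, the two volume contributions $(f,v_h)_{0,\tO}$ cancel and only boundary terms survive, namely
\[
a_h(u,v_h)-\ell_h(v_h) = (\bar g-\tS u,\partial_\tns v_h)_{0,\tG} + \gamma\,\bigl(h^{-1}(\tS u-\bar g),\tS v_h\bigr)_{0,\tG}\,.
\]
Finally I would invoke the defining relation of the Taylor remainder recorded in \eqref{eq:u-g}, namely $R_h u = \bar g - \tS u$ on $\tG$: replacing $\bar g-\tS u$ by $R_h u$ in the first term and $\tS u-\bar g$ by $-R_h u$ in the second gives exactly the claimed identity \eqref{eq:non-Gal-Orth}.

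The argument is entirely bookkeeping plus one application of the divergence theorem, so I do not anticipate a genuine obstacle. The only point deserving care is the legitimacy of Green's formula, i.e.\ that $\partial_\tns u$ defines an $L^2(\tG)$ trace and that the above integration by parts holds. This is ensured by Assumption \ref{ass:regularity}, which places $u$ in the space $V(\tO;\tT)$ so that the trace of $\nabla u$ on $\tG$ is controlled in $L^2(\tG)$, together with the fact that for each fixed mesh the set $\mathrm{clos}(\tO)$ stays away from the boundary singularities, which lie on $\G\setminus\tG$; hence $u$ is regular enough near $\tG$ for the divergence theorem to apply, and if desired the identity may alternatively be justified by a density argument from smooth functions.
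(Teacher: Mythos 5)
Your proof is correct and takes essentially the same route as the paper: the paper substitutes the remainder identity \eqref{eq:u-g} into the Nitsche identity \eqref{eq:nitsche1} (which encodes precisely your integration by parts $(\nabla u,\nabla v_h)_{0,\tO}=(f,v_h)_{0,\tO}+(\partial_\tns u,v_h)_{0,\tG}$) and then subtracts the discrete equation \eqref{eq:SBM-formulation-1}, whereas you subtract first via the Galerkin relation and integrate by parts afterwards --- the same three ingredients in a different order. Your closing justification of Green's formula through Assumption \ref{ass:regularity} and the membership $u\in V(\tO;\tT)$ is sound (and the paper leaves it implicit), though the added claim that $\mathrm{clos}(\tO)$ stays away from the boundary singularities is not needed and not guaranteed, since surrogate vertices may touch $\Gamma$.
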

\begin{proof}
Consider \eqref{eq:nitsche1} and replace therein the expression of $u-\bar{g}$ given by \eqref{eq:u-g}, obtaining
\begin{equation*}
\begin{split}
&(\nabla u, \nabla v_h)_{0,\tO} \!\! - (\partial_\tns u, v_h)_{0,\tG} \!\! -  ( \tS u + R_h u, \partial_\tns v_h)_{0,\tG} \!\! +  \gamma \, (h^{-1} \tS u +R_h u, \tS v_h)_{0,\tG} \\
& \hskip 3cm = (f,v_h)_{0,\tO} -  (\bar{g}, \partial_\tns v_h)_{0,\tG} + \gamma \, (h^{-1} \bar{g}, \tS v_h)_{0,\tG}\,, \quad \forall v_h \in V_h \,.
\end{split}
\end{equation*}
Subtracting from this identity the definition \eqref{eq:SBM-formulation-1} of SBM solution, we obtain the result.
\end{proof}

We wish to estimate 
\begin{equation}\label{eq:Riesz}
\Vert u-u_h \Vert_{0,\tO} = \sup_{\tilde{z} \in L^2(\tO)} \frac{(u-u_h,\tilde{z})_{0,\tO}}{\Vert \tilde{z} \Vert_{0,\tO}}\,.
\end{equation}
Given $\tilde{z} \in L^2(\tO)$, let $z \in L^2(\Omega)$ be its extension by $0$ outside $\tO$, and let $w$ be the solution of the auxiliary boundary-value problem
\begin{equation}\label{eq:aux-Dirichlet}
\begin{split}
-\Delta w &= z \quad \text{in \ } \Omega \,, \\
w &=0 \quad \text{on \ } \Gamma \,.
\end{split}
\end{equation}
Let $s_0 \in (\tfrac12,1]$ be such that $w \in H^{1+s_0}(\Omega)$; thus, $s_0=1$ if there are no boundary singularities ($\lambda_S=\infty$ in \eqref{eq:min-lambda}), otherwise it is any real number satisfying $s_0 < \lambda_S<1$. It holds $\Vert w \Vert_{1,\Omega} + |\nabla w|_{s_0,2,\Omega} \leq c \Vert z \Vert_{0, \Omega}= c \Vert \tilde{z} \Vert_{0, \tO}$ for some $c>0$ independent of $\tilde{z}$. Thus, $w \in W(\tO;\tT) := V(\tO;\tT,s_0,2)$ as defined in \eqref{eq:def-Wh1}.

The same argument that led to \eqref{eq:trace-u} shows that the trace $\tilde{q}$ of $w$ on $\tG$ satisfies
\begin{equation}\label{eq:trace-w}
\tilde{q} = -\nabla w \cdot \bs{d} - R(w,\bs{d})\,,
\end{equation}
whence on $\tG$ it holds
\begin{equation}\label{eq:w-q}
0= w-\tilde{q}  \ = \ \tS w  + R_h w \,.
\end{equation}
Thus, for any $v \in V(\tO;\tT)$ we have
\begin{equation*}
\begin{split}
(\tilde{z},v)_{0,\tO} &= -(\Delta w,v )_{0,\tO}  = (\nabla w, \nabla v)_{0,\tO} - (\partial_\tns w, v)_{0,\tG} \\
&= (\nabla w, \nabla v)_{0,\tO} - (\partial_\tns w, v)_{0,\tG} - (\tS w +R_h w, \partial_\tns v)_{0,\tG} \\
& \hskip 4cm + \gamma (h^{-1}(\tS w + R_h w), \tS v)_{0,\tG} \\
&= a_h(w,v) - (R_h w, \partial_\tns v)_{0,\tG} + \gamma (h^{-1} R_h w, \tS v)_{0,\tG} \\
& = a_h(v,w) + (\partial_\tns w, \nabla v \cdot \bs{d})_{0,\tG} - (\partial_\tns v, \nabla w \cdot \bs{d})_{0,\tG} \\ 
& \hskip 4cm - (R_h w, \partial_\tns v)_{0,\tG} + \gamma (h^{-1} R_h w, \tS v)_{0,\tG} \,,
\end{split}
\end{equation*}
where in the last step we have used Lemma \ref{lemma:non-symmetry}. We now pick $v = \psi := u-u_h$ and we use Lemma \ref{lemma:non-Gal-orth} with $v_h = w_I :=I_h w$ to get
\begin{equation}\label{eq:dua1}
\begin{split}
(\tilde{z},\psi)_{0,\tO} &= a_h(\psi, w) + E_1(\psi, w) + E_2(\psi,w)  \\
&= a_h(\psi, w- w_I) + E_1(\psi, w) + E_2(\psi,w) + E_3(u,w_I) \,, 
\end{split}
\end{equation}
with
\begin{equation}\label{eq:def-E1}
E_1(\psi, w) := (\partial_\tns w, \nabla \psi \cdot \bs{d})_{0,\tG} - (\partial_\tns \psi, \nabla w \cdot \bs{d})_{0,\tG} \,,
\end{equation}
\begin{equation}\label{eq:def-E2}
E_2(\psi,w) := - (R_h w, \partial_\tns \psi)_{0,\tG} + \gamma (h^{-1} R_h w, \tS \psi)_{0,\tG}\,,
\end{equation}
\begin{equation}\label{eq:def-E3}
E_3(u,w_I) := (R_h u, \partial_\tns w_I)_{0,\tG} - \gamma (h^{-1}R_h u, \tS w_I)_{0,\tG}\,.
\end{equation}

We proceed to bound the four error terms on the right-hand side of \eqref{eq:dua1}. Concerning the first one, we observe that an argument similar to the one used in the proof of Proposition \ref{prop:estim-approx-err} yields the bound
\begin{equation}\label{eq:err-interp-w}
\Vert w-w_I \Vert_{W(\tO;\tT)} \leq C \, h_\Omega^{s_0} |\nabla w|_{\sigma_0,2,\Omega}\;.
\end{equation}
On the other hand, proceeding as in the proof of Proposition \ref{prop:continuity-a}, one gets
$$
|\, a_h(\psi,w-w_I)\, | \leq B \, \Vert \psi \Vert_{V(\tO;\tT)} \, \Vert w-w_I \Vert_{W(\tO;\tT)} 
$$
for some constant $B>0$ independent of the meshsize. Hence, by  \eqref{eq:estim-discr-err-V} we deduce the existence of a constant $C_{0,0}>0$ independent of $u$, $\tilde{z}$ and the meshsize, such that
\begin{equation}\label{eq:first-bound-enh}
|\, a_h(\psi,w-w_I)\, | \leq C_{0,0} h_\Omega^{r+s_0} \, ( |\nabla u |_{s,p,\Omega} + K(u,f,g) ) \, \Vert \tilde{z} \Vert_{0,\tO} \,.
\end{equation}
Concerning the error term $E_1$ in \eqref{eq:dua1}, recalling \eqref{eq:d-smallness} we have 
$$
|\,E_1(\psi, w) \, | \leq c_d \, h_\Gamma^{1/2+\zeta} \Vert \nabla w \Vert_{0,\tG} \Vert h^{1/2}\nabla \psi \Vert_{0,\tG} 
$$
with $\Vert \nabla w \Vert_{0,\tG} \leq c (\Vert w \Vert_{1,\Omega} + |\nabla w|_{\rho,\Omega})$ by the trace theorem, and $\Vert h^{1/2}\nabla \psi \Vert_{0,\tG} \leq \bar{C}_I \, \Vert \psi \Vert_{V(\tO;\tT)}$ by \eqref{eq:scaled-5}. Hence, we deduce the existence of a constant $C_{0,1}>0$ independent of $u$, $\tilde{z}$ and the meshsize, such that
\begin{equation}\label{eq:second-bound-enh}
|\, E_1(\psi, w) \, | \leq C_{0,1} h_\Omega^{r + 1/2+\zeta}  \, ( |\nabla u |_{s,p,\Omega} + K(u,f,g) ) \, \Vert \tilde{z} \Vert_{0,\tO} \,.
\end{equation}
Next, considering the error term $E_2$ in \eqref{eq:dua1}, we have
\begin{equation*}
\begin{split}
|\,E_2(\psi, w) \, | &\leq  \Vert h^{-1/2} R_h w \Vert_{0,\tG} ( \Vert h^{1/2} \nabla \psi \Vert_{0,\tG} + \gamma \Vert h^{-1/2} \tS \psi \Vert_{0,\tG}) \\
& \leq c\, \Vert h^{-1/2} R_h w \Vert_{0,\tG} \Vert \psi \Vert_{V(\tO;\tT)}\,.
\end{split}
\end{equation*}
To estimate the norm involving $R_h w$, we can apply the equivalent statement of Lemma \ref{lem:consist-2} for $w$, with the summation index $p=2$. Thus, 
denoting by $\sigma_0$ the constant defined in \eqref{eq:def-exponent-sigma} with $p=2$, we get
\begin{equation}\label{eq:bound-Rhw}
\Vert h^{-1/2} R_h w \Vert_{0,\tG}  \leq c \, h_\Gamma^{\sigma_0} \, ( |\nabla w |_{s_0,\Omega} + \Vert \tilde{z} \Vert_{0,\tO} )
\leq c' \, h_\Gamma^{\sigma_0} \, \Vert \tilde{z} \Vert_{0,\tO} 
\end{equation}
Hence, we deduce the existence of a constant $C_{0,2}>0$ independent of $u$, $\tilde{z}$ and the meshsize, such that
\begin{equation}\label{eq:third-bound-enh}
|\, E_2(\psi, w) \, | \leq C_{0,2} h_\Omega^{r + \sigma_0}  \, ( |\nabla u |_{s,p,\Omega} + K(u,f,g) ) \, \Vert \tilde{z} \Vert_{0,\tO} \,.
\end{equation}

Finally, the error term $E_3$ in \eqref{eq:dua1} can be equivalently written as
\begin{equation*}
\begin{split}
E_3(u,w_I) & = (R_h u, \partial_\tns w)_{0,\tG} - \gamma (h^{-1}R_h u, \tS w)_{0,\tG} - (R_h u, \partial_\tns (w-w_I))_{0,\tG} \\
& \ \ \ +\gamma (h^{-1}R_h u, \tS (w-w_I))_{0,\tG} \,.
\end{split}
\end{equation*}
Recalling \eqref{eq:w-q}, one has
\begin{equation*}
\begin{split}
|\,E_3(u, w_I) \, |  &\leq  \Vert h^{-1/2} R_h u \Vert_{0,\tG} (h_\Gamma^{1/2} \Vert \nabla w \Vert_{0,\tG} +  \gamma \Vert h^{-1/2} R_h w \Vert_{0,\tG} + \\
& \qquad \qquad  \qquad \qquad + \Vert h^{1/2} \nabla (w-w_I) \Vert_{0,\tG} + \gamma \Vert h^{-1/2} \tS (w-w_I) \Vert_{0,\tG}) \\
& \hskip -1cm \leq \Vert h^{-1/2} R_h u \Vert_{0,\tG} (h_\Gamma^{1/2} \Vert \nabla w \Vert_{0,\tG} \!\! +  \gamma \Vert h^{-1/2} R_h w \Vert_{0,\tG} \!\! + c\, \Vert w - w_I \Vert_{W(\tO;\tT)}) \,.
\end{split}
\end{equation*}
Using \eqref{eq:est-consist-2}, \eqref{eq:bound-Rhw} and \eqref{eq:err-interp-w}, and observing that $\min(1/2, \sigma_0,s_0)=1/2$, we deduce the existence of a constant $C_{0,3}>0$ independent of $u$, $\tilde{z}$ and the meshsize, such that
\begin{equation}\label{eq:fourth-bound-enh}
|\, E_3(\psi, w) \, | \leq C_{0,3} h_\Omega^{\sigma+1/2 }  \, K(u,f,g)  \, \Vert \tilde{z} \Vert_{0,\tO} \,.
\end{equation}

We are ready to state our estimate on the $L^2$-norm of the discretization error $u-u_h$. Using the Riesz representation \eqref{eq:Riesz} together with \eqref{eq:dua1} and the bounds \eqref{eq:first-bound-enh}, \eqref{eq:second-bound-enh}, \eqref{eq:third-bound-enh}, and \eqref{eq:fourth-bound-enh}, we arrive at the following result.

\begin{thm}\label{theo:estim-L2-err}
Under the same assumptions for the validity of Theorem \ref{theo:estim-discr-err-a}, setting
$$
r_0:= \min(\, s_0, \, 1/2+\zeta, \, \sigma_0, \, \sigma-r+1/2 \,) \;,
$$
there exists a constant $C>0$ independent of $u$ and the meshsize such that
\begin{equation}\label{eq:estim-discr-err}
\Vert u - u_h \Vert_{0,\tO} \leq C \, h_\Omega^{r+r_0}\, ( |\nabla u |_{s,p,\Omega} + K(u,f,g) )\,.
\end{equation}
\end{thm}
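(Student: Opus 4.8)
The plan is to assemble the four bounds already derived in this section into the Aubin--Nitsche estimate. Since the dual problem \eqref{eq:aux-Dirichlet}, the master identity \eqref{eq:dua1}, and the term-by-term estimates \eqref{eq:first-bound-enh}, \eqref{eq:second-bound-enh}, \eqref{eq:third-bound-enh}, \eqref{eq:fourth-bound-enh} are all in hand, the remaining work is essentially bookkeeping on the exponents of $h_\Omega$. First I would fix an arbitrary $\tilde{z} \in L^2(\tO)$, form its zero extension $z$, solve \eqref{eq:aux-Dirichlet} to produce the dual state $w \in W(\tO;\tT)$, and invoke \eqref{eq:dua1} to write the pairing $(\tilde{z},\psi)_{0,\tO}$ (with $\psi=u-u_h$) as the sum of the four contributions $a_h(\psi, w-w_I)$, $E_1(\psi,w)$, $E_2(\psi,w)$, $E_3(u,w_I)$.

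Next I would add the four bounds. They produce, respectively, the powers $h_\Omega^{r+s_0}$, $h_\Omega^{r+1/2+\zeta}$, $h_\Omega^{r+\sigma_0}$ and $h_\Omega^{\sigma+1/2}$, each multiplied by the same factor $(|\nabla u|_{s,p,\Omega}+K(u,f,g))\,\Vert\tilde{z}\Vert_{0,\tO}$, where for the fourth term one simply uses $K(u,f,g) \le |\nabla u|_{s,p,\Omega}+K(u,f,g)$ to match the common factor. Assuming without loss of generality $h_\Omega \le 1$, every power is dominated by $h_\Omega$ raised to the smallest of the four exponents. The only point requiring attention is the fourth exponent: I would rewrite $\sigma+1/2 = r + (\sigma - r + 1/2)$ in order to factor the common $h_\Omega^{r}$ out of all four terms. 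Here $\sigma - r + 1/2 > 0$ because $r = \min(s,\sigma) \le \sigma$ by \eqref{eq:def-exponent-r}, which is precisely what guarantees a genuine gain over the energy-norm rate of Theorem \ref{theo:estim-discr-err-a}.

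With this rewriting the four exponents read $r+s_0$, $r+(1/2+\zeta)$, $r+\sigma_0$, $r+(\sigma-r+1/2)$, so their minimum is $r + r_0$ with $r_0 = \min(s_0,\,1/2+\zeta,\,\sigma_0,\,\sigma-r+1/2)$ exactly as stated. Collecting the constants into a single $C>0$ independent of $u$, $\tilde{z}$ and the meshsize, I would obtain $(\tilde{z},\psi)_{0,\tO} \le C\,h_\Omega^{r+r_0}\,(|\nabla u|_{s,p,\Omega}+K(u,f,g))\,\Vert\tilde{z}\Vert_{0,\tO}$. Dividing by $\Vert\tilde{z}\Vert_{0,\tO}$ and taking the supremum over $\tilde{z}\in L^2(\tO)$ in the Riesz identity \eqref{eq:Riesz} then yields \eqref{eq:estim-discr-err}.

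I expect no serious obstacle at this final stage: the genuinely delicate estimates are the earlier ones, especially the bound \eqref{eq:fourth-bound-enh} on $E_3$, which relied on the dual trace identity \eqref{eq:w-q}, the consistency estimate \eqref{eq:est-consist-2} applied to both $R_h u$ and (with the summation index $p=2$) $R_h w$ via \eqref{eq:bound-Rhw}, and the interpolation bound \eqref{eq:err-interp-w}. The only thing to verify carefully here is that $r_0>0$, so that the estimate is a true enhancement: this holds since $s_0 \in (\tfrac12,1]$, $\sigma_0>0$, and $\sigma - r + 1/2 \ge 1/2$, making the $L^2$ rate strictly sharper than the $V(\tO;\tT)$-norm rate of Corollary \ref{prop:estim-discr-err-V}.
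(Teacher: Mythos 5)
Your proposal is correct and is essentially the paper's own proof: the paper concludes Theorem \ref{theo:estim-L2-err} in exactly this way, combining the Riesz representation \eqref{eq:Riesz} with the identity \eqref{eq:dua1} and the four bounds \eqref{eq:first-bound-enh}, \eqref{eq:second-bound-enh}, \eqref{eq:third-bound-enh}, \eqref{eq:fourth-bound-enh}, with your rewriting $\sigma + 1/2 = r + (\sigma - r + 1/2)$ accounting for the fourth entry in $r_0$. Your supplementary checks --- matching the common factor in the $E_3$ bound via $K(u,f,g) \le |\nabla u|_{s,p,\Omega} + K(u,f,g)$, taking $h_\Omega \le 1$ so the smallest exponent dominates, and verifying $r_0 > 0$ from $s_0 \in (\tfrac12,1]$ and $r = \min(s,\sigma) \le \sigma$ --- are all sound.
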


\begin{rem}\label{rem:rate-L2}
{\rm
It is worth analyzing the predicted decay rate in different situations, making the exponent $r+r_0$ explicit. 

In the most favorable case (smooth data and smooth boundary, hence, $\lambda_S=\infty$),
one has $r=s_0=1$, and $\sigma=3/2+2\zeta$, $\sigma_0=1+3\zeta/2$, whence $r+r_0= 3/2+\zeta$. 

On the other hand, in the presence of boundary singularities (i.e., $1/2 < \lambda_S <1$), then $r$ and $s_0$ are any real numbers $< \lambda_S$, whereas $\sigma=\sigma_0=\lambda_S(1+\zeta)$, whence $r+r_0$ is any real number $< \lambda_S+\min(\lambda_S,1/2+\lambda_S\zeta)$.

Note that in both cases, the exponent $r+r_0$ is optimal ($=2$ or $<2\lambda_S$) provided $\zeta \geq 1/2$.
}
\end{rem}

\begin{rem}\label{re:non-optimality}
{\rm
Allowing $\zeta >0$ to be arbitrarily small is of paramount importance in complex engineering applications, as it essentially avoids restrictions in the mesh generation near the boundary. 
In this case, the exponent in \eqref{eq:estim-discr-err} is sub-optimal.  Inspecting the behavior of the four error terms appearing in \eqref{eq:dua1} reveals that in the most favorable situation (smooth data and smooth boundary), the terms $a_h(\psi, w- w_I)$, $E_2(\psi,w)$, and  $E_3(u,w_I)$ exhibit the optimal decay $h_\Omega^2$ (see \eqref{eq:first-bound-enh}, \eqref{eq:third-bound-enh}, and \eqref{eq:fourth-bound-enh} with $p=2$). Thus, the responsible for sub-optimality is the term $E_1(\psi,w)$, which reflects the discrepancy between the normal $\bs{n}$ to the exact boundary $\Gamma$ and the normal $\tn$ to the surrogate boundary $\tG$.

To shed further light, assume that $\G$ is smooth and the mapping \eqref{eq:mappingM} is the closest-point projection, which implies $\bs{d}=\Vert \bs{d} \Vert \bs{n}$. Using the splitting $\partial_\tns = (\tn \cdot \bs{n}) \partial_n + (\tn \cdot \bs{t}) \partial_t$, after cancellation of the common term involving the product $\partial_n w \,\partial_n \psi$, one gets
$$
E_1(\psi,w) = \int_{\tG} \Vert \bs{d} \Vert \, (\tn \cdot \bs{t}) (\partial_n \psi \, \partial_t w - \partial_n w \, \partial_t \psi) \,.
$$
For sufficiently refined meshes, when $\tG$ tends to become parallel to $\G$, the factor $\tn \cdot \bs{t}$ may become small, and cancellations due to sign changes from one edge to the other along $\tG$ may occur. Indeed, the optimal rate $h_\Omega^2$ for $\Vert u - u_h \Vert_{0,\tO}$ is often observed in practical calculations with SBM.

Analogous considerations apply in the presence of geometric singularities.
}
\end{rem}

\section{A numerical test \label{sec:numerical_test} }
To validate our theoretical findings, we perform a two-dimensional convergence test for the Galerkin discretization scheme \eqref{eq:SBM-formulation-1} with penalty parameter $\gamma = 10$ in the domain $\Om$ shown in Figure \ref{fig:NumericalTest_NonSmoothBoundary}. The domain $\Omega$ exhibits a re-entrant corner $C$ of amplitude $\omega=\frac{3\pi}2$, formed by two slightly curved edges. The exact solution, in a polar coordinate system $(\varrho, \theta)$ centered at $C$, is given by
\begin{equation} \label{eq:sol-test}
u(\varrho, \theta) = \varrho^{2/3} \, \sin(2\theta/3  + \pi/6 )
\end{equation}

Figure \ref{fig:numerical_solutions_convRates} displays the surrogate domain and boundary for a given mesh, the corresponding numerical solution and the error dacay rates on $u$ and $\nabla u$ in the $L^{2}(\ti{\Om})$-norm. It is apparent from this test involving a re-entrant corner, that the actual convergence rate in  the  energy norm is coherent with our theoretical prediction  given in Theorem \ref{theo:estim-discr-err-a}, as observed in Figure \ref{fig:gradU_convRate}; indeed, our solution $u$ belongs to $H^{1+r}(\Omega)$ for any $r < \tfrac23$.  On the other hand, Theorem \ref{theo:estim-L2-err} predicts a convergence rate in  the $L^2$-norm not smaller than $\tfrac76$, whereas the optimal rate is slightly larger, namely $\tfrac43$;  
Figure \ref{fig:U_convRate} indicates that the latter is probably the actual rate, in accordance with the heuristic considerations given in Remark \ref{re:non-optimality}.
 The actual error values and numerical convergence rates are reported in Table~\ref{table1}.
\begin{figure}
	\centering
\begin{tikzpicture}
\begin{axis}[
axis x line=none,
axis y line=none,
domain=-0.6:0.6,
samples=1001,
disabledatascaling,
xticklabels=\empty,
]
\addplot [black,line width = 0.25mm] {-abs(rad(atan(x)))};
\addplot [black,line width = 0.25mm] {0.55};
\addplot [mark=none,black,line width = 0.25mm] coordinates {(0.6, 0.55) (0.6, -0.54)};
\addplot [mark=none,black,line width = 0.25mm] coordinates {(-0.6, 0.55) (-0.6, -0.54)};
\draw[black,->,-stealth] (0.0707106*0.75,-0.0707106*0.75) arc (-45:225:0.075);
\node[text width=0.2cm] at (0,0.125) { $\omega$};
\dimline[extension start length =0,extension end length = 0,line style = {line width=0.7}]{(-0.6,0.62)}{(0.6,0.62)}{$L$};
\dimline[extension start length =0,extension end length = 0,line style = {line width=0.7}]{(-0.69,-0.54)}{(-0.69,0.55)}{$D$};
\fill (0,0) circle (0.5mm);
\node[text width=0.25cm] at (0,-0.1) { $C$};
\end{axis}
\end{tikzpicture}
	\caption{Domain $\Om$ with $\omega = 3\pi/2$, $C=(0,0)$, $L = 1.2$ and $D = 0.55+ \arctan(0.6)$.}
\label{fig:NumericalTest_NonSmoothBoundary}
\end{figure}
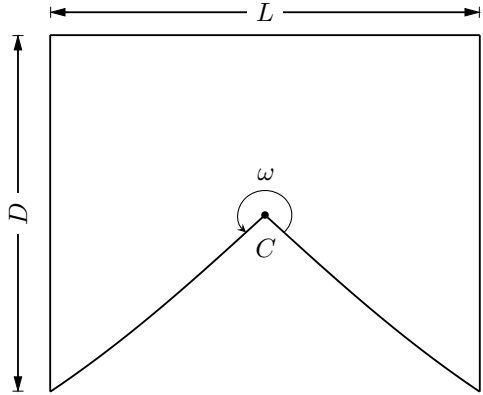
\begin{figure}[t!]\centering	
\begin{subfigure}{0.46\textwidth}\centering
	\includegraphics[width=1\linewidth]{\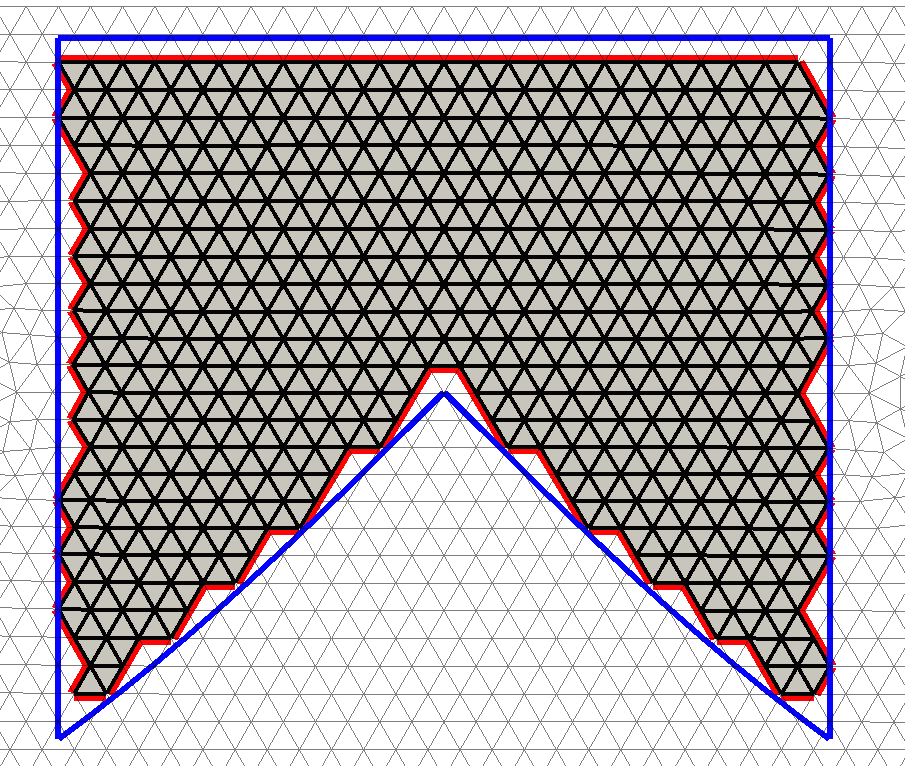}
	\label{fig:efig1}
	\caption{Surrogate domain $\tO$ (grey) with corresponding mesh, true boundary $\G$ (blue) and surrogate boundary $\tG$ (red).}
\end{subfigure}
\qquad
\begin{subfigure}{0.46\textwidth}\centering
	\includegraphics[width=1\linewidth]{\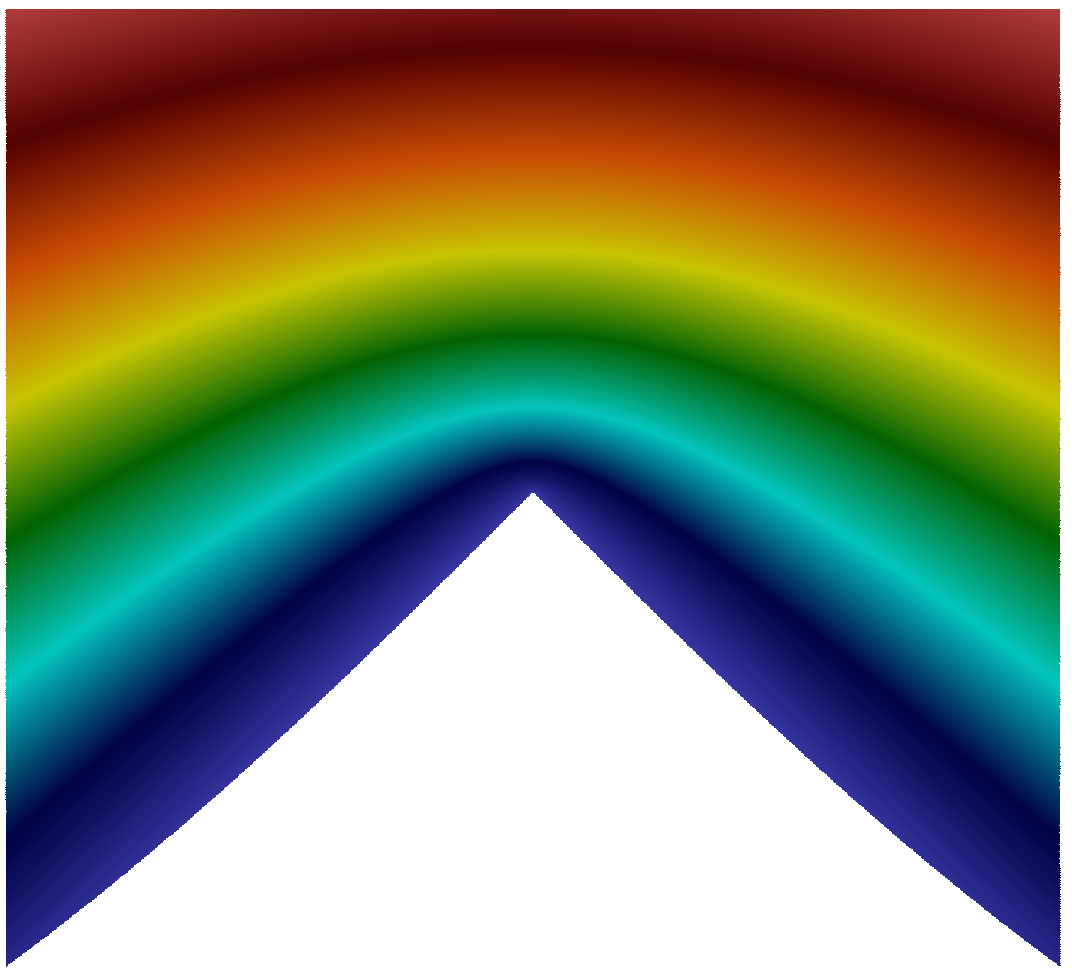}
	\label{fig:efig2}
	\caption{Numerical solution $u_h$.}
\end{subfigure}
\\[.3cm]
\begin{subfigure}[b]{0.46\linewidth}
	\centering
	\begin{tikzpicture}[scale= 0.75]
	\begin{loglogaxis}[
	grid=major,x post scale=1.0,y post scale =1.0,
	xlabel={h},ylabel={},
	legend style={font=\footnotesize,legend cell align=left},
	legend pos= north west,
	axis equal	]
	\addplot[ mark=x,blue,line width=1pt] coordinates {
		(0.000633897,0.0000162644)
		(0.00126778,0.000035424)
		(0.0025355,0.0000904343)
		(0.00507076,0.000240725)
		(0.010141,0.000669068)
		(2.027730E-02,0.00174916)
		(0.0405394,0.00434224)
		(0.0810849,0.00901982)
	}; 
	\addplot[black,line width=1pt] coordinates {
	(3E-3,2*0.0000356359487256136)
	(6E-3,0.00016)
	(6E-3,2*0.0000356359487256136)
	(3E-3,2*0.0000356359487256136)
}; 
	\end{loglogaxis}
	\node[text width=0.6cm] at (3.8,1.8) {\footnotesize $\frac{7}{6}$};
	\end{tikzpicture}
	\caption{Convergence rate of $u_h$ to $u$ in $L^{2}$-norm.}
	\label{fig:U_convRate}
\end{subfigure}
\qquad
\begin{subfigure}[b]{0.46\linewidth}
	\centering
	\begin{tikzpicture}[scale=0.75]
	\begin{loglogaxis}[
	grid=major,x post scale=1.0,y post scale =1.0,
	xlabel={h},ylabel={},
	legend style={font=\footnotesize,legend cell align=left},
	legend pos= north west,
	axis equal	]
	\addplot[ mark=x,blue,line width=1pt] coordinates {
		(0.000633897,0.00397341)
		(0.00126778,0.0072572)
		(0.0025355,0.00986189)
		(0.00507076,0.0150105)
		(0.010141,0.0236991)
		(2.027730E-02,0.0375139)
		(0.0405394,0.0593803)
		(0.0810849,0.122044)
	}; 
	\addplot[black,line width=1pt] coordinates {
	(3E-03,0.008)
	(6E-3,0.0126992084157456)
	(6E-3,0.008)
	(3E-3,0.008)
}; 
	\end{loglogaxis}
	\node[text width=0.2cm] at (3.5,1.9) {\footnotesize $\frac{2}{3}$};
	\end{tikzpicture}
		\caption{Convergence rate of $\nabla u_h$ to $\nabla u$ in $L^{2}$-norm.}
		\label{fig:gradU_convRate}
\end{subfigure}
	\caption{Surrogate domain $\tO$, numerical solution $u_h$, and convergence rates of the error $u-u_h$ in the $L^{2}$-norm and $H^1$-seminorm.}
\label{fig:numerical_solutions_convRates}
\end{figure}
\begin{table}[hbt!]\centering
	\begin{tabular}{M{1.8cm} | M{2.7cm}  M{1.2cm}  | M{2.7cm}  M{1.2cm} }
		  & \multicolumn{4}{c}{ }  \\ 
		Mesh Size & $ \, \| \, u - u^h \, \|_{0,\tO} $ & Rate & $ \, \| \, \nabla (u - u^h) \, \|_{0,\tO} $ & Rate \\
		\hline
		8.10E-02 & 9.02E-03 & - & 1.22E-01 & -\\
		4.05E-02 & 4.34E-03 & 1.05 & 5.94E-02 & 1.04 \\
		2.03E-02 & 1.75E-03 & 1.31 & 3.75E-02 & 0.66 \\
		1.01E-02 & 6.69E-04 & 1.39 & 2.37E-02 & 0.66 \\
		5.07E-03 & 2.41E-04 & 1.47 & 1.50E-02 & 0.66 \\
		2.54E-03 & 9.04E-05 & 1.41 & 9.86E-03 & 0.61\\
		1.27E-03 & 3.54E-05 & 1.35 & 7.25E-03 & 0.44\\
	    6.34E-04 & 1.63E-05 & 1.12 & 3.97E-03 & 0.87 \\
	\end{tabular}
	\caption{ Convergence rates for the Poisson equation with solution \eqref{eq:sol-test} using the SBM approach. 
		\label{table1} }
\end{table}
\clearpage


\section*{Acknowledgments}
The support of the Army Research Office (ARO) under Grant W911NF-18-1-0308 is gratefully acknowledged. CC performed this research in the framework of the Italian MIUR Award ``Dipartimenti di Eccellenza 2018-2022" granted to the Department of Mathematical Sciences, Politecnico di Torino (CUP: E11G18000350001), and with the support of the Italian MIUR PRIN Project 201752HKH8-003. 
He is a member of the Italian INdAM-GNCS research group.
 
\bibliographystyle{plain}
\bibliography{./SBM_Poisson}


\end{document}